\newtheorem{lemma}{Lemma}[section]
\newtheorem{defn}{Definition}[section]
\newtheorem{prop}{Proposition}[section]
\newtheorem{corollary}{Corollary}[section]
\newtheorem{remark}{Remark}[section]
\newtheorem{theorem}{Theorem}[section]
\newtheorem{setting}{Settings}[section]
\newtheorem{assumption}{Assumption$^*$}[section]
\newtheorem{condition}{Condition}[section]
\newcommand{\RR}{\mathbb{R}}
\newcommand{\argmin}{\operatorname*{argmin}}
\newcommand{\vertiii}[1]{{\left\vert\kern-0.25ex\left\vert\kern-0.25ex\left\vert #1 
    \right\vert\kern-0.25ex\right\vert\kern-0.25ex\right\vert}}
\theoremstyle{definition}
\begin{document}

\title {On the Inference of Applying Gaussian Process Modeling to a Deterministic Function}
\author{Wenjia Wang}
\date{}
\maketitle
\vspace{-5mm}

\begin{abstract}
Gaussian process modeling is a standard tool for building emulators for computer experiments, which are usually used to study deterministic functions, for example, a solution to a given system of partial differential equations. This work investigates applying Gaussian process modeling to a deterministic function from prediction and uncertainty quantification perspectives, where the Gaussian process model is misspecified. Specifically, we consider the case where the underlying function is fixed and from a reproducing kernel Hilbert space generated by some kernel function, and the same kernel function is used in the Gaussian process modeling as the correlation function for prediction and uncertainty quantification. While upper bounds and the optimal convergence rate of prediction in the Gaussian process modeling have been extensively studied in the literature, a comprehensive exploration of convergence rates and theoretical study of uncertainty quantification is lacking. We prove that, if one uses maximum likelihood estimation to estimate the variance in Gaussian process modeling, under different choices of the regularization parameter value, the predictor is not optimal and/or the confidence interval is not reliable. In particular, lower bounds of the prediction error under different choices of the regularization parameter value are obtained. The results indicate that, if one directly applies Gaussian process modeling to a fixed function, the reliability of the confidence interval and the optimality of the predictor cannot be achieved at the same time.
\end{abstract}

\section{Introduction}\label{secintro}
Computer experiments are often used to study a system of interest. For example, \cite{Mak2018} studies a complex simulation model for turbulent flows in swirl injectors. Other examples include \cite{burchell}, who estimates sexual transmissibility of human papillomavirus infection, and \cite{moran}, who uses the Cardiovascular Disease Policy Model to project cost-effectiveness of treating hypertension in the U.S. according to 2014 guidelines. In these examples, the simulators are expensive, and the inputs/responses pairs are often not available for an extensive exploration of the underlying function. One well-established approach for solving this problem is the use of an \textit{emulator}, which is an inexpensive approximation for the simulator. 

In computer experiments, the two major problems are prediction and uncertainty quantification. Gaussian process modeling, which is a widely used method in computer experiments, naturally enables prediction and statistical uncertainty quantification. In the Gaussian process modeling, the underlying function is assumed to be a realization of a Gaussian process. Based on the Gaussian process assumption, the conditional distribution can be constructed at each unobserved point in a region, which provides a natural predictor via conditional expectation and a pointwise confidence interval. The pointwise confidence intervals can be used for statistical uncertainty quantification.

However, in practice, the Gaussian process is usually misspecified. The responses of a computer model usually come from a deterministic function which may not be a sample path of the Gaussian process used in Gaussian process modeling, or may come from a smaller function space that has probability zero \citep{ba2012composite,bower2010parameter,higdon2002space,tan2018gaussian,xu2019modeling}. For example, a function in an infinite-dimensional reproducing kernel Hilbert space is typically smoother than a sample path of the corresponding Gaussian process \cite{steinwart2019convergence}. Here, the word ``\textit{corresponding}'' means that the covariance function of the Gaussian process and the kernel function of the reproducing kernel Hilbert space are the same, up to a constant multiplier. Therefore, if one applies Gaussian process modeling to a function in the corresponding reproducing kernel Hilbert space, a model misspecification issue occurs.

Despite this model misspecification, 
maximum likelihood estimation is commonly used to estimate unknown parameters in the covariance function within the Gaussian process model \citep{santner2013design}. Applying maximum likelihood estimation when a model is misspecified may be problematic because the estimated parameter can diverge as the sample size goes to infinity. For example, \cite{xu2017maximum} shows that if the underlying function is $f(x) = x^\gamma$ on $[0,1]$ and a Gaussian correlation function is used in the Gaussian process modeling, the estimated variance can either go to zero or infinity as the sample size increases to infinity. Another question is: when the Gaussian process model is misspecified, are the confidence intervals with estimated parameters reliable?
In practice, it is often observed that Gaussian process models have poor coverage of their confidence intervals \citep{gramacy2012cases,joseph2011regression,yamamoto2000alternative}. One possible reason is that the Gaussian process model may be misspecified; thus the confidence intervals may be inadequate for quantifying the uncertainty of predictions.

In this work, we investigate the prediction and confidence intervals in misspecified Gaussian process models used to recover deterministic functions from a frequentist view, i.e., assuming the underlying function is fixed but unknown. This is different from the Bayesian perspective, where a Gaussian process prior on the function space is induced. Specifically, we consider the following settings.

\begin{setting}\label{modelsetting}
    The underlying deterministic function $f$ is fixed and lies in a reproducing kernel Hilbert space. A Gaussian process model is applied for prediction and uncertainty quantification. The kernel function of the reproducing kernel Hilbert space and the correlation function in the Gaussian process model are the same.
\end{setting}
In Settings \ref{modelsetting}, we assume that the variance in the Gaussian process model is unknown and needs to be estimated. For more on the formal settings considered in this work and more discussion, see Section \ref{subsec:psetting}. As stated before, the model misspecification occurs under Settings \ref{modelsetting}. This model misspecification does not change the form of the predictor (which is one reason that the Gaussian process model is typically misspecified), but significantly changes the uncertainty quantification results. We consider two cases, one case is that the observations have no noise; the other is that the observations have noise. When the observations have no noise, we show that if an estimated variance obtained by maximum likelihood estimation is used in the confidence intervals, then
the confidence intervals are not reliable. Here, the reliability is used in the sense that is to be introduced later; see Section \ref{subsecofCI}. This suggests that, if the Gaussian process model is misspecified, the confidence interval needs to be carefully constructed to quantify the uncertainties---not merely derived by the corresponding Gaussian process model.

In many cases, computer experiments are stochastic, in the sense that stochastic errors are introduced to simulate the randomness in real systems. A recent overview of stochastic emulators is \cite{baker2020stochastic}. In stochastic computer experiments, a regularization parameter is used to counteract the noise's influence. The value of this regularization parameter is usually a constant \citep{ ankenman,baker2020stochastic,dancik2007mlegp}. It is known that the optimal convergence rate for non-parametric regression is determined by the smoothness of the underlying function, denoted by $\nu$. The optimal convergence rate is $n^{-\frac{\nu}{2\nu +d}}$, where $n$ is the sample size, and $d$ is the dimension of the input space \citep{stone1982optimal}. In this work, we show that if the regularization parameter value is chosen to be a constant, the corresponding predictor is not optimal, in the sense that the convergence rate of the prediction error is of a higher order than the optimal rate. Furthermore, with an estimated variance obtained by maximum likelihood estimation, we show that under different choices of the regularization parameter value (not restricted to be a constant), the corresponding predictor is not optimal, or the confidence interval is not reliable. We also derive some lower bounds on the convergence rates of the prediction error of Gaussian process modeling. These results suggest that we may lose the prediction efficiency or reliability of uncertainty quantification if the Gaussian process model is misspecified and maximum likelihood estimation is used.

The rest of this paper is arranged as follows. In Section \ref{secPre}, we introduce Gaussian process modeling, maximum likelihood estimation, and reproducing kernel Hilbert spaces, as well as our definition of reliability of confidence intervals. The main results of this work are also summarized in Section \ref{secPre}. In Sections \ref{secdeter} and \ref{unreliableCIsec}, we present the main results of this work, under the case where observations have no noise and the case where observations have noise, respectively. Simulation studies are reported in Section \ref{secnum}. Conclusions and discussion are made in Section \ref{secdiscuss}. The technical proofs are given in Appendix.

\section{Preliminaries}\label{secPre}
This section provides a brief introduction to Gaussian process modeling, maximum likelihood estimation, and reproducing kernel Hilbert spaces, which are used in developing the main results. We also provide our definition of the reliability of confidence intervals. Problem settings and a summary of the main results are presented at the end of this section. 

\subsection{Gaussian process modeling}\label{secGPmodel}
In this work, we consider applying Gaussian process modeling to a fixed function $f$, defined on a convex and compact set\footnote{
This condition can be relaxed to a compact set satisfying interior cone condition and with Lipschitz boundary; see \cite{adams2003sobolev,wendland2004scattered} for discussion of these conditions. In fact, the compactness and convexity imply the interior cone condition and Lipschitz boundary; see \cite{hofmann2007geometric,niculescu2006convex}.} $\Omega\subset \mathbb{R}^d$ with a positive Lebesgue measure. Because the domain $\Omega$ is fixed, the corresponding asymptotic framework is called fixed-domain asymptotics \citep{stein1995fixed,stein2012interpolation}. Suppose we have $n$ observed pairs $(x_k,y_k),k=1,\ldots,n$, given by
\begin{eqnarray}
y_k=f(x_k) + \epsilon_k, \label{recovering}
\end{eqnarray}
where $x_k \in \Omega$ are distinct measurement locations (i.e., $x_k\neq x_j$ for $k\neq j$) and $\epsilon_k \sim N(0,\sigma_\epsilon^2)$ are i.i.d. normally distributed random errors with variance $\sigma_\epsilon^2 \geq 0$. If the observations are not corrupted by noise, we have $\sigma_\epsilon^2 = 0$, otherwise $\sigma_\epsilon^2 > 0$. One popular method to recover the function $f$ is stationary Gaussian process modeling. Let $Z$ be a stationary Gaussian process defined on $\RR^d$. For the ease of mathematical treatment, we consider simple kriging. Therefore, we assume $Z$ has mean zero, variance $\sigma^2$ and correlation function $\Psi$, denoted by $Z\sim GP(0,\sigma^2\Psi)$. The correlation function $\Psi$ is stationary, i.e., the function value of $\Psi(x,x')$ only depends on the difference $x-x'$; thus we can write $\Psi(x-x'):=\Psi(x,x')$. We also assume $\Psi$ is strictly positive definite and integrable on $\RR^d$, and $\Psi(0)=1$. By Bochner's theorem (Page 208 of \cite{gihman1974theory}; Theorem 6.6 of \cite{wendland2004scattered}) and Theorem 6.11 of \cite{wendland2004scattered}, there exists a function $f_\Psi$ such that 
\begin{align*}
\Psi(h)=\int_{\mathbb{R}^d} e^{i \omega^T h}  f_\Psi(\omega) d \omega
\end{align*}
for any $h\in \RR^d$. The function $f_\Psi$ is known as the \textit{spectral density} of $Z$ or $\Psi$. In this work, we suppose that $f_\Psi$ decays algebraically, i.e., satisfies the following condition.
\begin{condition}\label{C1}
	There exist constants $c_2 \geq c_1>0$ and $\nu>d/2$ such that, for all $\omega\in\mathbb{R}^d$,
	$$ c_1(1+\|\omega\|_2^2)^{-\nu} \leq  f_\Psi(\omega)\leq c_2(1+\|\omega\|_2^2)^{-\nu}, $$
	where $\|\cdot\|_2$ denotes the Euclidean metric.
\end{condition}
One example of correlation functions satisfying Condition \ref{C1} is the isotropic Mat\'ern correlation function \citep{stein2012interpolation}, given by
\begin{eqnarray}\label{materngai}
\Psi_{M}(h)=\frac{1}{\Gamma(\nu)2^{\tilde \nu-1}}(2\sqrt{\tilde \nu}\phi \|h\|_2)^{\tilde \nu} K_{\tilde \nu}(2\sqrt{\tilde \nu}\phi\|h\|_2),
\end{eqnarray}
with the spectral density \citep{tuo2015theoretical}
\begin{eqnarray*}
f_\Psi(\omega;\tilde \nu,\phi)= \pi^{-d/2}\frac{\Gamma(\tilde \nu+d/2)}{\Gamma(\tilde \nu)}(4\tilde \nu \phi^2)^{\tilde \nu} (4\tilde \nu\phi^2+\|\omega\|_2^2)^{-(\tilde \nu+d/2)},
\end{eqnarray*}
where $\phi,\tilde \nu>0$, and $K_{\tilde \nu}$ is the modified Bessel function of the second kind. By setting $\nu = \tilde \nu + d/2$, we can see $\Psi_M$ satisfies Condition \ref{C1}.

Another example of correlation functions satisfying Condition \ref{C1} is the generalized Wendland correlation function \citep{bevilacqua2019estimation,chernih2014closed, gneitingstationary}, given by
\begin{align*}
    \Psi_{GW}(h) = \left\{
    \begin{array}{lc}
         \frac{1}{B(2\kappa,\mu+1)}\int_{h}^1 u(u^2-h^2)^{\kappa - 1}(1-u)^\mu du, & 0\leq h<1, \\
         0, & h\geq 1,
    \end{array}\right.
\end{align*}
where $\kappa > 0$ and $\mu \geq (d+1)/2 + \kappa$, and $B$ is the beta function. It can be shown that $\Psi_{GW}$ also satisfies Condition \ref{C1}; see Theorem 1 of \cite{bevilacqua2019estimation}.

Suppose we are interested in the value of $Z(x)$ on $\Omega$ and we observe data
\begin{align}
    z_j = Z(x_j)+\epsilon_j', j=1,...,n,
\end{align}
where $x_k \in \Omega$ are distinct measurement locations (i.e., $x_k\neq x_j$ for $k\neq j$) and $\epsilon_k' \sim N(0,\sigma_\epsilon^2)$ are i.i.d. normally distributed random errors with variance $\sigma_\epsilon^2 \geq 0$. Conditional on  $\mathcal{Z}=(z_1,...,z_n)^T$, $Z(x)$ is normally distributed at point $x$. Note that $\mathcal{Z}$ is a random vector, where the randomness is induced by $Z(x_j)$ and $\epsilon_j'$. The conditional expectation of $Z(x)$ is given by
\begin{eqnarray}\label{mean}
\mathbb{E}[Z(x)|\mathcal{Z}]=r(x)^T (R+\mu I_n)^{-1} \mathcal{Z},
\end{eqnarray}
where $r(x)=(\Psi(x-x_1),\ldots,\Psi(x-x_n))^T, R=(\Psi(x_j-x_k))_{j k}$, $I_n$ is an identity matrix, and $\mu = \sigma_\epsilon^2/\sigma^2$. 
The conditional expectation is a nature predictor of $Z(x)$, and it can be shown that the conditional expectation \eqref{mean} is the best linear unbiased predictor \citep{santner2013design,stein2012interpolation}. A predictor given by Gaussian process modeling is then the conditional expectation of $Z(x)$. 

In addition to prediction, uncertainty quantification plays an essential role in statistics. Gaussian process modeling enables statistical uncertainty quantification via confidence intervals \citep{rasmussen2006gaussian,santner2013design}. Conditional on $\mathcal{Z}$, the conditional variance of $Z(x)$ is given by 
\begin{eqnarray*}
    \text{Var}[Z(x)|\mathcal{Z}]=\sigma^2(1-r(x)^T (R+\mu I)^{-1} r(x)),
\end{eqnarray*}
where $R$, $r(x)$ and $\mu$ are as in \eqref{mean}. Let $\Phi$ denote the cumulative distribution function of the standard normal distribution $N(0,1)$ and let $q_\beta = \Phi^{-1}(1 - \beta/2)$ denote the $(1 - \beta/2)$th quantile, where $\beta \in (0,1)$. A level $(1 - \beta)100\%$ pointwise confidence interval on point $x\in \Omega$ can be constructed by
\begin{align*}
CI_{n,\beta}(x) = [\mathbb{E}[Z(x)|\mathcal{Z}] - c_{n,\beta}(x), \mathbb{E}[Z(x)|\mathcal{Z}] + c_{n,\beta}(x)],
\end{align*}
where
\begin{align}\label{cnxbetainCI}
c_{n,\beta}(x) = & q_{1-\beta/2} \sqrt{\text{Var}[Z(x)|\mathcal{Z}]}
 =  q_{1-\beta/2}  \sqrt{\sigma^2(1 - r(x)^T (R+\mu I_n)^{-1} r(x))},
\end{align}
and $r(x)$ and $R$ are as in \eqref{mean}. The confidence interval is often used in the numerical simulations to show the uncertainty quantification results in Gaussian process modeling. A few examples are \cite{ba2012composite,gramacy2015local, hung2015analysis,williams2006combining}.

Recall that we apply Gaussian process modeling to the deterministic function $f$. Therefore, we \textit{treat} the observations $Y=(y_1,...,y_n)^T$ as the observations from the Gaussian process $Z$. The predictor of $f(x)$ becomes
\begin{eqnarray}
f_n(x) := \mathbb{E}[Z(x)|y_1,\ldots,y_n]=r(x)^T (R+\mu I_n)^{-1} Y, x\in \Omega,\label{predictor}
\end{eqnarray}
and the confidence interval becomes 
\begin{align}\label{CIZx}
CI_{n,\beta}(x) = [f_n(x) - c_{n,\beta}(x), f_n(x) + c_{n,\beta}(x)],
\end{align}
where $c_{n,\beta}(x)$ is as in \eqref{cnxbetainCI}.

\subsection{Maximum likelihood estimation}
Recall that we consider the underlying function is deterministic and lies in some reproducing kernel Hilbert space (see Settings \ref{modelsetting}). The Gaussian process modeling is used for prediction and uncertainty quantification, where the correlation function is the same as the kernel function of the reproducing kernel Hilbert space. Despite the model misspecification issue, i.e., the underlying function is typically smoother than the sample path in the corresponding Gaussian process, maximum likelihood estimation \citep{rasmussen2006gaussian,santner2013design,stein2012interpolation} is often used to specify the parameters value that are not pre-determined in the covariance function $\sigma^2\Psi(\cdot - \cdot)$. Let $\Psi_\theta$ be a family of correlation functions indexed by $\theta=(\theta_1,\ldots,\theta_q)^T\in \Theta \subset \RR^q$, and $R(\theta) = (\Psi_\theta(x_j - x_k))_{jk}$. By direct calculation and reparametrization, it can be shown that, up to an additive constant, the log-likelihood function is (Page 169 of \cite{stein2012interpolation}; Page 66 of \cite{santner2013design})
\begin{align}\label{loglike1}
    \ell(\theta,\sigma^2,\mu;X,Y) = -\frac{n}{2}\log\sigma^2-\frac{1}{2}\log \det (R(\theta)+\mu I_n) -\frac{Y^T (R(\theta) +\mu I_n)^{-1}Y}{2\sigma^2}.
\end{align}
The maximum likelihood estimate of the unknown parameters $\theta,\sigma^2,\mu$ can be found by maximizing the log-likelihood function. In practice, it is often assumed that the scale parameter, which is $\phi$ in \eqref{materngai} if a Mat\'ern correlation function is used, and the variance $\sigma^2$ are unknown and need to be estimated \citep{bachoc2013parametric,bostanabad2018uncertainty,hung2011penalized,joseph2006limit,lee2018single,li2005analysis,rasmussen2006gaussian,santner2013design,stein2012interpolation}. The smoothness parameter $\tilde\nu$ in the Mat\'ern correlation function \eqref{materngai} is usually pre-determined. We will discuss the parameter $\mu$ later. The consistency of parameter estimation has been studied in literature under the assumption that the underlying truth is a realization of a Gaussian process \citep{anderes2010consistent,bevilacqua2019estimation,kaufman2013role,mardia1984maximum,wang2011fixed,ying1991asymptotic,ying1993maximum,zhang2004inconsistent}. In particular, \cite{bevilacqua2019estimation,kaufman2013role,wang2011fixed,ying1991asymptotic,zhang2004inconsistent} show that the parameter estimation can be inconsistent and only the microergodic parameter can be estimated. For details of microergodic parameters, see \cite{matheron2012estimating}; also see pages 163-165 of \cite{stein2012interpolation}.

If the underlying truth is a fixed function that is not a sample path of the Gaussian process used in the Gaussian process modeling, to the best of our knowledge, the only related work is \cite{xu2017maximum}. In \cite{xu2017maximum}, it is shown that if the underlying function is $f(x) = x^\gamma$ with $\gamma\geq 0$ defined on $[0,1]$ and a Gaussian correlation function is used in the Gaussian process modeling, the estimated variance can either go to infinity or converge to zero as the sample size increases. Other works related to the parameter estimation under model misspecification include \cite{bachoc2013cross,bachoc2018asymptotic}.

In this work, we do not consider the consistency of parameter estimation, but we investigate the influence of parameter estimation on the prediction and uncertainty quantification of Gaussian process modeling under model misspecification.
For the ease of mathematical treatment, we assume $\Psi$ is known. It follows the standard arguments that the maximizer of \eqref{loglike1} with respect to $\sigma^2$ is
\begin{eqnarray}
\hat{\sigma}^2=Y^T (R+\mu I_n)^{-1}Y/n.\label{sigmahat}
\end{eqnarray}
Similar settings have also been considered by \cite{karvonen2020maximum}, where they call $\sigma$ a scale parameter and consider only estimating $\sigma$. In practice, $\mu$ is usually imposed as a constant \citep{dancik2007mlegp}, estimated by the sample average if there are replicates on each measurement location \citep{ankenman}, or estimated via maximum likelihood estimation \citep{wang2018controlling}. We mainly focus on the first approach (using an imposed $\hat \mu_n$) and note that the results can be easily generalized to the case of using the second approach. The third approach is much more complicated, and we do not consider it in the present work. Here we use $\hat \mu_n$ to stress the difference, because it may not be the true value of $\mu$. Recall that the underlying truth is a deterministic function from a frequentist point of view. Thus there is no definition of the ``true'' value of $\sigma^2$. It is also not clear what is the ``true'' $\mu = \sigma_\epsilon^2/\sigma^2$. In this work, we call $\hat \mu_n$ a \textit{regularization parameter}, because this parameter is \textit{imposed}. Nevertheless, we consider that the value of the regularization parameter $\hat \mu_n$ can increase or decrease with the sample size and is not restricted to be a constant.

By plugging in estimated (and imposed) parameters, we can obtain the corresponding predictor and confidence interval. We use $\hat f_n(x)$ to denote the predictor with imposed $\hat \mu_n$ on an unobserved point $x$, i.e.,
\begin{eqnarray}
\hat f_n(x) = r(x)^T (R+\hat \mu_n I_n)^{-1} Y,\label{predictorhat}
\end{eqnarray}
where $r(x)$ and $R$ are as in \eqref{mean}. By plugging $\hat \sigma^2$ as in \eqref{sigmahat} and $\hat \mu_n$ into \eqref{CIZx} and \eqref{cnxbetainCI}, we obtain the estimated pointwise confidence interval on point $x\in \Omega$
\begin{align}\label{CIZxest}
\widehat{CI}_{n,\beta}(x) = [\hat f_n(x) - \hat c_{n,\beta}(x), \hat f_n(x) + \hat c_{n,\beta}(x)],
\end{align}
where
\begin{align}\label{cnxbetainCIest}
\hat c_{n,\beta}(x) = & q_{1-\beta/2}  \sqrt{\hat \sigma^2(1 - r(x)^T (R+\hat \mu_n I_n)^{-1} r(x))},
\end{align}
and $\hat f_n(x)$ is as in \eqref{predictorhat}. In \eqref{cnxbetainCIest}, we impose a regularization parameter $\hat \mu_n > 0$ if $\sigma_\epsilon^2 > 0$, and set $\hat \mu_n = 0$ if $\sigma_\epsilon^2 = 0$. Note that the estimated variance $\hat \sigma^2$ is not present in \eqref{predictorhat}; thus it does not influence the predictor. However, $\hat \sigma^2$ appears in \eqref{CIZxest} and as we will see later, it influences the reliability of the confidence interval; thus the uncertainty quantification results of Gaussian process modeling.

\subsection{Reliability of confidence intervals}\label{subsecofCI}
In practice, the Gaussian process model is often misspecified. The underlying fixed function may lie in a subspace of the support of the corresponding Gaussian process and the subspace may have probability zero, or may not even be in the support.
This model misspecification may influence the reliability of the confidence interval thus the quality of uncertainty quantification. However, it is not possible to quantify the reliability of confidence intervals without having a clear definition of the term ``reliability''. In this section, we first review some possible ways to define the reliability, and propose our definition of the reliability of confidence intervals. 

Recall that in this work, we assume that the underlying truth is a deterministic function. Therefore, we mainly consider the reliability of confidence intervals for a fixed function. Let $g\in \mathcal{G}$ be a fixed function, where $\mathcal{G}$ is a Hilbert space of functions equipped with norm $\|\cdot\|_{\mathcal{G}}$. Let $I_Xg$ be a linear predictor for a function $g\in \mathcal{G}$, where $X=\{x_1,...,x_n\} \subset \Omega$ is the set of measurement locations. The predictor $I_Xg$ depends on $X$ and observations. Suppose the observations are $y^{(g)}_j$ for $j=1,...,n$, given by
\begin{align}\label{ygdef}
     y^{(g)}_j = g(x_j) + \epsilon_j,
\end{align}
where $\epsilon_j$'s are i.i.d. noise realizations of a random variable with mean zero and variance $\sigma_\epsilon^2\in [0,\infty)$. Typically, $I_Xg$ is a linear combination of the observations, i.e., has the form
\begin{align}\label{IXgdef}
    I_Xg(x) = \sum_{j=1}^n b_j(x) y^{(g)}_j
\end{align}
for point $x\in \Omega$, where $b_j$'s are functions not depending on $g$ but can depend on $X$. Let $CI_{X,\beta}(x) = [I_Xg(x) - a_\beta(x), I_Xg(x) +a_\beta(x)]$ be an \textit{imposed} level $(1 - \beta)100\%$ pointwise confidence interval on point $x\in \Omega$ (determined by the uncertainty quantification method that a user applies), where $\beta \in (0,1)$ and $a_\beta$ is a non-negative function. Clearly, this imposed confidence interval may not have confidence level $(1 - \beta)100\%$. We want to define the reliability of this imposed confidence interval. Note that $CI_{X,\beta}$ and $a_\beta$ can depend on $X$, but we suppress the dependency for notational simplicity. Also note the confidence interval on point $x$ is centered at $I_Xg(x)$. 

Probably the most natural way to define the reliability is by the definition of confidence intervals. This approach is considered by \cite{sniekers2015credible}. Consider the probability $\mathbb{P}_g(g(x) \in CI_{X,\beta}(x))$ for point $x\in \Omega$, where $\mathbb{P}_g$ refers to the distribution of $ y^{(g)}_1,..., y^{(g)}_n$ as in \eqref{ygdef}, where the ``true'' $g$ is given. If confidence intervals are reliable, the probability $\mathbb{P}_g(g(x) \in CI_{X,\beta}(x))$ should be close to the nominal level $(1 - \beta)100\%$, or at least larger than $(1 - \beta)100\%$ (conservative). In \cite{sniekers2015credible}, a function defined on $[0,1]$ and a Brownian motion prior are considered. The measurement locations are equally spaced. Under these settings, \cite{sniekers2015credible} shows that $CI_{X,\beta}$ can be conservative or unreliable, depending on the smoothness of $g$. However, as stated in \cite{sniekers2015credible}, the exact formulas strongly depend on the equally spaced measurement locations and cannot be easily extended to a more general choice of measurement locations.

Another probability-based definition of reliability of confidence intervals is by the average coverage probability (ACP) \citep{nychka1988bayesian}. In \cite{nychka1988bayesian}, confidence intervals are considered to be reliable if the ACP for the function $g$ and the confidence interval $CI_{X,\beta}$
\begin{align*}
    \frac{1}{n}\sum_{j=1}^n \mathbb{P}(g(x_j) \in CI_{X,\beta}(x_j))
\end{align*}
is close to the nominal level $(1-\beta)100\%$, where $x_j$ and $g(x_j)$ are as in \eqref{ygdef}. This definition only quantifies the reliability of the confidence interval on the measurement locations and does not count the confidence interval $CI_{X,\beta}(x)$ at any \textit{unobserved} point $x\in \Omega$. Therefore, the ACP is not suitable to be used for quantifying the uncertainties because if the observations are noiseless and an interpolant is used, the ACP is always equal to one.

Coverage rates are often used to assess the reliability of the confidence interval in the field of computer experiments \citep{joseph2011regression,lee2018single,sung2019multiresolution}. The coverage rate is defined by 
\begin{align}\label{coverageratedef}
    \frac{{\rm Vol}(\{x| g(x)\in  CI_{X,\beta}(x)\}) }{{\rm Vol}(\Omega)},
\end{align}
where Vol$(A)$ denotes the volume of a set $A\subset \Omega$ with respect to the Lebesgue measure.
A practical way to compute the coverage rate is by random sampling. Suppose $x_1',...,x_N'$ are $N$ uniformly distributed points in $\Omega$. Then the coverage rate can be approximated by
\begin{align*}
    \frac{{\rm card}(\{x_j'| g(x_j')\in  CI_{X,\beta}(x_j')\}) }{N},
\end{align*}
where card$(B)$ denotes the cardinality of a set $B$. However, we find it is hard to theoretically investigate the quantity \eqref{coverageratedef}, because $\{x| g(x)\in  CI_{X,\beta}(x)\}$ can be irregular and hard to characterize. 

In this work, we consider the ratio of the prediction error and the width of the confidence interval, given by $(g-I_X g)/|CI_{X,\beta}|$, where $|CI_{X,\beta}| = 2 a_\beta$ denotes the width of $CI_{X,\beta}$. We use the convention $0/0 = 0$ if $|CI_{X,\beta}(x)| = 0$ for some $x\in \Omega$. If the confidence interval $CI_{X,\beta}$ is reliable, the width of the confidence interval should be large enough to cover the difference between the predictor $I_Xg$ and the true function $g$ with high probability such that the ratio $|g(x)-I_X g(x)|/|CI_{X,\beta}(x)|$ is small for $x\in \Omega$. In particular, we consider the expectation $\left(\mathbb{E}\|(g-I_X g)/|CI_{X,\beta}|\|_{L_{p}(\Omega)}^p\right)^{1/p}$ for $2\leq p \leq \infty$ (we assume it exists; if it does not exist, then the confidence interval is thought to be not reliable), where the expectation is taken with respect to the noise and the set of measurement locations $X$, and $\|f\|_{L_{p}(\Omega)}$ is the $L_p$-norm of $f\in L_{p}(\Omega)$, defined by
\begin{align*}
    \|f\|_{L_{p}(\Omega)}^p = \int_{\Omega} |f(x)|^p dx.
\end{align*}
The expectation $\left(\mathbb{E}\|(g-I_X g)/|CI_{X,\beta}|\|_{L_{p}(\Omega)}^p\right)^{1/p}$ is the $L_p$-norm on the probability space $(\mathcal{A}, \mathcal{B}, P)$, where $\mathcal{A}$ is the sample space, $\mathcal{B}$ is the Borel algebra, and $P$ is the probability measure induced by the noise $\epsilon$ and $X$. Note that the randomness in $(g-I_X g)/|CI_{X,\beta}|$ does not come from the function $g$, because $g$ is fixed from a frequentist perspective. Because we are interested in the scenario when the number of measurement locations increases, we consider an infinite sequence of the set of measurement locations, denoted by $\mathcal{X} = \{X_1,X_2,...,X_n,...\}$. Without loss of generality, we assume that ${\rm card}(X_n)=n$, where $n$ takes its value in an infinite subset of $\mathbb{N}_+$. We call $\mathcal{X}$ a \textit{sampling scheme}, as in \cite{tuo2019kriging}. In the rest of this work, we suppress the dependency of $X$ on $n$ for notational simplicity. If the confidence interval $CI_{X,\beta}$ is reliable, $\left(\mathbb{E}\|(g-I_X g)/|CI_{X,\beta}|\|_{L_{p}(\Omega)}^p\right)^{1/p}$ should be small, at least should be less than a constant that does not depend on the sample size. From a standard frequentist perspective, we consider the \textit{minimax} setting, i.e., we consider the \textit{worst} case.

According to the prior knowledge on the function $g$, we consider two subcases. Recall that $g\in \mathcal{G}$, where $\mathcal{G}$ is a Hilbert space of functions equipped with norm $\|\cdot\|_{\mathcal{G}}$. The first subcase is that $\|g\|_{\mathcal{G}}$ is upper bounded by some known constant. Without loss of generality, assume this known constant is one, i.e., the function $g$ lies in the unit ball of $\mathcal{G}$. We say the confidence interval $CI_{X,\beta}$ is $L_p$-\textit{weakly-reliable}, if
\begin{align}\label{eq:weakdf}
    \sup_{g\in \mathcal{G}, \|g\|_{\mathcal{G}} \leq 1} \left(\mathbb{E}\|(g-I_X g)/|CI_{X,\beta}|\|_{L_{p}(\Omega)}^p\right)^{1/p}\leq C
\end{align}
holds for all $X\in \mathcal{X}$ and all $n$, where $C$ is a constant not depending on $n$. In other words, the confidence interval is weakly-reliable if it is reliable in a ball of $\mathcal{G}$ with certain radius. However, in practice we cannot always expect $\|g\|_{\mathcal{G}}$ to be bounded by a \textit{known} constant. Since $g$ is a fixed function in $\mathcal{G}$, we know $\|g\|_{\mathcal{G}}$ is finite. Therefore, for any increasing sequence $\{a_n\}_{n\geq 1}$ not depending on $g$ and $\lim_{n \rightarrow \infty} a_n = \infty$, there exists an $N$ such that for all $n\geq N$, $\|g\|_{\mathcal{G}}\leq a_n$. We say the confidence interval $CI_{X,\beta}$ is $L_p$-\textit{strongly-reliable}, if there exists an increasing sequence $\{a_n\}_{n\geq 1}$ not depending on $g$ such that $\lim_{n \rightarrow \infty} a_n = \infty$ and
\begin{align}\label{eq:strongdf}
    \sup_{g\in \mathcal{G}, \|g\|_{\mathcal{G}} \leq a_n}  \left(\mathbb{E}\|(g-I_X g)/|CI_{X,\beta}|\|_{L_{p}(\Omega)}^p\right)^{1/p}\leq C'
\end{align}
holds for all $X\in \mathcal{X}$ and all $n$, where $C'$ is a constant not depending on $n$. Here we note that the constants $C$ and $C'$ can depend on $\mathcal{X}$.
Roughly speaking, a confidence interval is strongly-reliable if it is eventually reliable in the entire space $\mathcal{G}$ as the sample size increases to infinity. We summarize the above arguments in the following definition. Note in Definition \ref{defreci}, we suppress the dependency of $X$ on $n$ for notational simplicity.

\begin{defn}\label{defreci}
Let $\beta\in (0,1)$ be fixed, and $\mathcal{X}$ be a sampling scheme. Let $I_X g$ be a linear predictor as in \eqref{IXgdef} and $n = {\rm card}(X)$ be the sample size. Let $CI_{X,\beta}$ be an imposed level $(1 - \beta)100\%$ confidence interval centered at $I_X g$ with $\lim_{n\rightarrow \infty}\sup_{x\in \Omega}|CI_{X,\beta}(x)|=0$, where $|CI_{X,\beta}(x)|$ is the width of $CI_{X,\beta}(x)$. 

For $ 2 \leq p \leq \infty$, $CI_{X,\beta}$ is said to be $L_p$-weakly-reliable if \eqref{eq:weakdf} holds for all $n$, and is said to be $L_p$-strongly-reliable if there exists an increasing sequence $\{a_n\}_{n\geq 1}$ not depending on $g$,  and $\lim_{n \rightarrow \infty} a_n = \infty$ such that for all $n$, \eqref{eq:strongdf} holds, where $C$ and $C'$ are constants not depending on $n$ but possibly depending on $p$, $\beta$, and $\mathcal{X}$. The expectation is taken with respect to noise and $X$. 
\end{defn}

\begin{remark}
In Definition \ref{defreci}, the reason we require the width of the confidence interval satisfies $\lim_{n\rightarrow \infty}\sup_{x\in \Omega}|CI_{X,\beta}(x)|=0$ because we want the confidence interval to provide some information, otherwise we can select a wide confidence interval (for example, $CI_{X,\beta}(x)=[I_Xg(x)-n,I_Xg(x)+n]$ for all $x\in \Omega$) which can cover $g(x)$ and does not provide any useful information.
\end{remark}

Definition \ref{defreci} is motivated by the properties of confidence intervals of Gaussian process. Let $Z\sim GP(0,\sigma^2\Psi)$ be a Gaussian process defined on $\Omega$. On point $x\in \Omega$, let $I_X^{(1)}Z(x) = \mathbb{E}[Z(x)|\mathcal{Z}]$, where $\mathbb{E}[Z(x)|\mathcal{Z}]$ is as in \eqref{mean}. It can be seen that $I_X^{(1)}Z$ is a linear predictor and has the form as in \eqref{IXgdef}. Let $CI_{n,\beta}(x)$ be the confidence interval as in \eqref{CIZx}. Furthermore, assume the observations are not corrupted by noise, which implies $\sigma_\epsilon^2 = 0$ and $\mu = 0$. Consider $\left(\mathbb{E}\|(Z-I_X^{(1)} Z)/|CI_{n,\beta}|\|_{L_{p}(\Omega)}^p\right)^{1/p}.$ We have the following proposition.
\begin{prop}\label{propCIforGP}
Let $Z,I_X^{(1)} Z,CI_{n,\beta}$ be described above, and $\beta\in(0,1)$. Then we have 
\begin{align}\label{propCIforGPeq}
\left(\mathbb{E}\|(Z-I_X^{(1)} Z)/|CI_{n,\beta}|\|_{L_{p}(\Omega)}^p\right)^{1/p} = C 
\end{align}
holds for all $n$ and any $2\leq p < \infty$, where $C$ is a constant only depending on $p$, $\beta$ and $\Omega$. 
\end{prop}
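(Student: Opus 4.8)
The plan is to exploit the fundamental property that, for a Gaussian process, the conditional (posterior) variance $\text{Var}[Z(x)\mid\mathcal{Z}]$ depends only on the design $X$ and the evaluation point $x$, and \emph{not} on the observed values $\mathcal{Z}$. Indeed, from the variance formula preceding \eqref{cnxbetainCI} with $\mu=0$ we have $\text{Var}[Z(x)\mid\mathcal{Z}] = \sigma^2\bigl(1 - r(x)^T R^{-1} r(x)\bigr)$, which is a deterministic function of $x$ once $X$ is fixed. Consequently the half-width $c_{n,\beta}(x)$, and hence the width $|CI_{n,\beta}(x)| = 2q_{1-\beta/2}\sqrt{\text{Var}[Z(x)\mid\mathcal{Z}]}$, is deterministic given $X$, which is the conceptual crux that makes the whole calculation go through.

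First I would introduce the standardized residual
\[
W(x) := \frac{Z(x) - I_X^{(1)}Z(x)}{\sqrt{\text{Var}[Z(x)\mid\mathcal{Z}]}}.
\]
Because $Z(x), Z(x_1),\ldots,Z(x_n)$ are jointly Gaussian, the prediction error $Z(x) - I_X^{(1)}Z(x) = Z(x) - \mathbb{E}[Z(x)\mid\mathcal{Z}]$ is itself a mean-zero Gaussian random variable whose variance equals exactly the conditional variance $\text{Var}[Z(x)\mid\mathcal{Z}]$, and by the orthogonality principle it is uncorrelated with, hence independent of, $\mathcal{Z}$. Therefore $W(x)\sim N(0,1)$ exactly, for every fixed $x\in\Omega$ at which the conditional variance is positive. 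On the measurement locations $x_j$ the conditional variance vanishes and we invoke the convention $0/0=0$; since $X$ is finite this affects only a Lebesgue-null subset of $\Omega$ and does not alter any integral. Note also that $\sigma^2$ cancels between numerator and denominator, so the final constant will not depend on it.

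Next I would substitute into the target quantity. Writing out the $L_p(\Omega)$ norm and pulling the deterministic denominator out of the $x$-integral gives
\[
\left\|\frac{Z - I_X^{(1)}Z}{|CI_{n,\beta}|}\right\|_{L_p(\Omega)}^p = \frac{1}{(2q_{1-\beta/2})^p}\int_\Omega |W(x)|^p\,dx.
\]
Taking expectation over the process (there is no additive noise, as $\sigma_\epsilon^2=0$) and applying Tonelli's theorem to interchange $\mathbb{E}$ and $\int_\Omega$—justified by non-negativity of the integrand—yields $\int_\Omega \mathbb{E}|W(x)|^p\,dx$. Since $W(x)$ is standard normal for a.e.\ $x$, $\mathbb{E}|W(x)|^p = m_p := 2^{p/2}\Gamma\bigl((p+1)/2\bigr)/\sqrt{\pi}$, the $p$-th absolute moment of $N(0,1)$, which is finite for all $2\le p<\infty$. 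Hence the expectation equals $m_p\,\mathrm{Vol}(\Omega)/(2q_{1-\beta/2})^p$, and taking the $1/p$ power produces the constant $C = (m_p\,\mathrm{Vol}(\Omega))^{1/p}/(2q_{1-\beta/2})$, depending only on $p$, $\beta$, and $\Omega$.

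Finally, I would observe that this value is independent of both the sample size $n$ and the particular design $X$; in particular, averaging additionally over $X$ as in the general Definition \ref{defreci} leaves the constant unchanged, since the expectation of a constant is that same constant. The argument is essentially a clean computation, and the only points requiring care are the deterministic nature of the conditional variance, the exact normality (not merely asymptotic normality) of the standardized residual $W(x)$, and the measure-zero handling of the design points. None of these is a genuine obstacle, which is precisely why this well-specified Gaussian-process baseline serves to motivate the reliability notion introduced in Definition \ref{defreci}: the interesting failures can only arise once the model is misspecified.
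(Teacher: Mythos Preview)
Your proposal is correct and follows essentially the same approach as the paper: apply Fubini/Tonelli to swap expectation and the spatial integral, use that the prediction error at each $x$ is Gaussian with mean zero and variance equal to the (deterministic) conditional variance, and invoke the closed form for the $p$th absolute moment of a normal so that the variance factors cancel and only a constant depending on $p$, $\beta$, and $\Omega$ remains. Your write-up is in fact more careful than the paper's in making explicit the standardization $W(x)\sim N(0,1)$, the measure-zero handling of the design points, and the independence of the result from $n$ and $X$.
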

It can be seen that our definition of reliability stated in Definition \ref{defreci} is analogous to \eqref{propCIforGPeq}. According to Definition \ref{defreci}, if a confidence interval $CI_{X,\beta}$ is reliable, then for any fixed constant $c>0$, $cCI_{X,\beta}:=[I_Xg(x) - ca_\beta(x), I_Xg(x) +ca_\beta(x)]$ is also reliable. 
Furthermore, the ``less than or equal to'' relationship in Definition \ref{defreci} encourages a wider confidence interval. Therefore, our definition of the reliability is more like a \textit{necessary} condition rather than a sufficient condition. One way to specify the constant in Definition \ref{defreci} is by using the constant $C$ in Proposition \ref{propCIforGP}. However, one can argue that this constant may not be appropriate because unlike the unbiased predictor $I_X^{(1)} Z$, $I_X g$ is usually a biased predictor, and the constant in Proposition \ref{propCIforGP} may not be large enough to cover the bias. Practitioners may also consider other constants to counteract the model misspecification. How to choose an appropriate constant is out of the scope of this work, and we do not make any further discussion.

\subsection{Reproducing kernel Hilbert spaces and power functions}\label{subsecrkhs}

In this subsection, we review reproducing kernel Hilbert spaces and power functions, which are closely related to the Gaussian process model. Under the settings of computer experiments, if $\mu = 0$ in the Gaussian process model, the right-hand side of \eqref{predictor} is called a kriging interpolant \citep{wang2019prediction}, denoted by 
\begin{eqnarray}\label{interpolantFunction}
\mathcal{I}_{\Psi,X}f(x)=r(x)^T R^{-1} Y,
\end{eqnarray}
where $X = \{x_1,...,x_n\}$ denotes the set of measurement locations. Note that $x_k \in \Omega$ are distinct measurement locations and $\Psi$ is strictly positive definite, thus $R$ is invertible. In the area of scattered data approximation, the interpolation using operator $\mathcal{I}_{\Psi,X}$ is also called radial basis function approximation. A standard theory of radial basis function approximation works by employing \textit{reproducing kernel Hilbert spaces}. One way to define the reproducing kernel Hilbert space generated by a stationary correlation function is via the Fourier transform, defined by
$$\mathcal{F}(f)(\omega)=(2\pi)^{-d/2}\int_{\mathbb{R}^d} f(x) e^{-ix^T\omega}d x$$ for $f\in L_1(\mathbb{R}^d)$. The definition of the reproducing kernel Hilbert space can be generalized to $f\in L_2(\RR^d)\cap C(\RR^d)$. See \cite{girosi1995regularization} and Theorem 10.12 of \cite{wendland2004scattered}.

\begin{defn}\label{Def:NativeSpace}
Let $\Psi$ be a stationary correlation function that is integrable on $\RR^d$. Define the reproducing kernel Hilbert space $\mathcal{N}_\Psi(\RR^d)$ generated by $\Psi$ as
	$$\mathcal{N}_\Psi(\RR^d):=\{f\in L_2(\RR^d)\cap C(\RR^d):\mathcal{F}(f)/\sqrt{\mathcal{F}(\Psi)}\in L_2(\RR^d)\},$$
	with the inner product
	$$\langle f,g\rangle_{\mathcal{N}_\Psi(\RR^d)}=(2\pi)^{-d/2}\int_{\RR^d}\frac{\mathcal{F}(f)(\omega)\overline{\mathcal{F}(g)(\omega)}}{\mathcal{F}(\Psi)(\omega)}d \omega.$$
\end{defn}

For a positive number $\nu > d/2$, the Sobolev space on $\RR^d$ with smoothness $\nu$ can be defined as
\begin{align*}
H^\nu(\mathbb{R}^d) = \{f\in L_2(\mathbb{R}^d): |\mathcal{F}(f)(\cdot)| (1+\|\cdot\|_2^2)^{\nu/2}\in L_2(\mathbb{R}^d)\},
\end{align*}
equipped with an inner product
$$\langle f,g\rangle_{H^\nu(\mathbb{R}^d)}=(2\pi)^{-d/2}\int_{\RR^d}\mathcal{F}(f)(\omega)\overline{\mathcal{F}(g)(\omega)}(1+\|\omega\|_2^2)^{\nu}d \omega.$$ It can be shown that $H^\nu(\mathbb{R}^d)$ coincides with the reproducing kernel Hilbert space $\mathcal{N}_\Psi(\RR^d)$, if $\Psi$ satisfies Condition \ref{C1} (\cite{wendland2004scattered}, Corollary 10.13, also see Lemma \ref{coro1013}). This equivalence allows us to evaluate whether a predictor in a reproducing kernel Hilbert space is optimal; see Section \ref{unreliableCIsec} for more details.

Reproducing kernel Hilbert spaces can also be defined on a suitable subset (for example, convex and compact) $\Omega\subset \RR^d$, denoted by $\mathcal{N}_\Psi(\Omega)$, with norm
\begin{eqnarray*}
\|f\|_{\mathcal{N}_\Psi(\Omega)}=\inf\{\|f_E\|_{\mathcal{N}_\Psi(\RR^d)}:f_E\in\mathcal{N}_\Psi(\RR^d),f_E|_\Omega=f\},
\end{eqnarray*}
where $f_E|_\Omega$ denotes the restriction of $f_E$ to $\Omega$. Sobolev spaces on $\Omega$ can be defined in a similar way.

If $f\in \mathcal{N}_\Psi(\Omega)$, there is a simple error bound (\cite{wendland2004scattered}, Theorem 11.4):
\begin{eqnarray}\label{firstestimate}
|f(x)-\mathcal{I}_{\Psi,X}f(x)|\leq P_{\Psi,X}(x)\|f\|_{\mathcal{N}_\Psi(\Omega)},
\end{eqnarray}
for each $x\in\Omega$, where $P_{\Psi,X}$ is a function independent of $f$. 
The square of $P_{\Psi,X}$ is called the \textit{power function}, given by
\begin{eqnarray*}
P^2_{\Psi,X}(x)=1-r(x)^T R^{-1}r(x)
\end{eqnarray*}
for each $x\in \Omega$, where $r(x)$ and $R$ are as in \eqref{mean}. In addition, we define
\begin{eqnarray}\label{power}
\mathcal{P}_{\Psi,X}:=\sup_{x\in\Omega}P_{\Psi,X}(x).
\end{eqnarray} 
Note that the power function $P_{\Psi,X}$ and its supremum $\mathcal{P}_{\Psi,X}$ only depend on $X$, $\Omega$ and $\Psi$, and does not depend on the observations.

\subsection{Problem settings and summary of results}\label{subsec:psetting}
In this work, we consider the inference of misspecified Gaussian process models. Specifically, we consider prediction and uncertainty quantification when applying Gaussian process modeling to a fixed function $f\in \mathcal{N}_\Psi(\Omega)$, under the following misspecified model assumption.

\begin{assumption}[Misspecified model assumption]\label{assummis}
The function $f$ is a realization of a Gaussian process with mean zero and covariance function $\sigma^2\Psi$ with a finite $\sigma>0$.
\end{assumption}
We use an asterisk ``$^*$'' to denote that Assumption$^*$ \ref{assummis} is a \textit{misspecified} assumption, and is not true. After the earliest version of this work was submitted, Assumption$^*$ \ref{assummis} was also considered by \cite{karvonen2020maximum}. Under Assumption$^*$ \ref{assummis}, we incorrectly assume $f$ is a realization of $Z\sim GP(0,\sigma^2\Psi)$ for $f\in \mathcal{N}_\Psi(\Omega)$. Assumption$^*$ \ref{assummis} is a misspecified model assumption because if $Z\sim GP(0,\sigma^2\Psi)$, then $\mathbb{P}(Z\in \mathcal{N}_\Psi(\Omega))=0$ since $\Psi$ satisfies Condition \ref{C1} and $\Omega$ is convex and compact with positive Lebesgue measure \cite{driscoll1973reproducing}. In fact, the smoothness of the sample paths are at least $d/2$ different from the smoothness of the correlation function, if $\nu$ in Condition \ref{C1} is larger than $d$ \cite{steinwart2019convergence}. The different assumptions of $f\in \mathcal{N}_\Psi(\Omega)$ and $f$ is a realization of $Z\sim GP(0,\sigma^2\Psi)$ yield the same predictor, but the prediction error analysis methodologies are completely different. For discussion of these two different assumptions, see \cite{scheuerer2013interpolation}. 

Under the misspecified model assumption Assumption$^*$ \ref{assummis}, one can use maximum likelihood estimation to ``estimate'' the unknown parameters and impose confidence intervals. Of course, this is questionable, but it is widely used in practice as stated in Section \ref{secintro}, and also in numerical examples showing in research papers. In these synthetic numerical examples, the test function is typically chosen to be a fixed function with closed form (and usually infinitely differentiable), which naturally satisfies the condition $f\in \mathcal{N}_{\Psi}(\Omega)$. 
Under Assumption$^*$ \ref{assummis}, we show the following results:
\begin{enumerate}
    \item[(i)] If the observations are not corrupted by noise, then the confidence interval is not $L_p$-weakly-reliable for $p\in (2,\infty]$, and is not $L_2$-strongly-reliable.
    \item[(ii)] If the observations are corrupted by noise, then the confidence interval is not $L_2$-strongly-reliable, or the predictor is not optimal, in the sense that the predictor does not achieve the optimal convergence rate under $L_2$ metric. 
\end{enumerate}
In the rest of this work, we will use the following definitions. For two positive sequences $a_n$ and $b_n$, we write $a_n\asymp b_n$ if, for some constants $C,C'>0$, $C\leq a_n/b_n \leq C'$. Similarly, we write $a_n\gtrsim b_n$ and $b_n\lesssim a_n$ if $a_n\geq Cb_n$ for some constant $C>0$. For notational simplicity, we will use $C,C',C_1,C_2,...$ and $\eta,\eta_0,\eta_1,...$ to denote the constants, of which the values can change from line to line.

\section{When the observations are noiseless}\label{secdeter}
In this section, we consider the case that the observations have no noise. We call this case \textit{deterministic case}, because several measurements at the same location will always lead to the same response. 

\subsection{The unreliability of the confidence interval} \label{subsecdetunre}
We focus on the Mat\'ern correlation function, defined in \eqref{materngai}. Since $\phi$ and $\tilde \nu$ are known, we can let $\phi = 1/(2\sqrt{\tilde \nu})$, because otherwise we can stretch the region $\Omega$ to adjust the scale parameter $\phi$. After a proper reparametrization, we can rewrite \eqref{materngai} as
\begin{align}\label{matern}
	\Psi_{M}(h)=\frac{1}{\Gamma(\nu - d/2)2^{\nu - d/2 -1}}\|h\|_2^{\nu - d/2} K_{\nu - d/2}(\|h\|_2)
\end{align}
for $h\in \RR^d$, where $\nu > d/2$. We set $\Psi = \Psi_M$ in this section. 

Recall that in the deterministic case, 
$\sigma_\epsilon^2 = 0$, thus $\epsilon_k=0$, $k=1,...,n$, $\mu = 0$ and $\hat \mu_n = 0$. The predictor $\hat f_n(x)$ in \eqref{predictorhat} becomes a kriging interpolant \eqref{interpolantFunction}, i.e.,
\begin{eqnarray}\label{krigingidefn}
\hat f_n(x)=\mathcal{I}_{\Psi,X}f(x) = r(x)^T R^{-1} Y
\end{eqnarray}
for any point $x\in \Omega$, where $r(x)$ and $R$ are as in \eqref{mean}, and $Y=(y_1,...,y_n)^T$. Because the observations are not corrupted by noise, we have $y_k=f(x_k)$, for $k=1,...,n$. Note that in \eqref{krigingidefn}, the variance is not present and there is no estimated or imposed parameter.

As stated in Section \ref{subsecofCI}, for $\beta\in (0,1)$, an imposed confidence interval with estimated variance at point $x\in\Omega$ can be constructed by plugging $\hat \mu_n = 0$ in \eqref{CIZxest}. The confidence interval is given by 
\begin{align}\label{CIZxestdeter}
\widehat{CI}_{n,\beta}(x) = [\hat f_n(x) - \hat c_{n,\beta}(x), \hat f_n(x) + \hat c_{n,\beta}(x)],
\end{align}
where
\begin{align}
\hat c_{n,\beta}(x) = & q_{1-\beta/2}  \sqrt{\hat \sigma^2(1 - r(x)^T R^{-1}r(x))},\label{cnxbetainCIestdeter}\\
\text{ and }\qquad\hat \sigma^2  = &  Y^TR^{-1}Y/n.\nonumber
\end{align}
Since the underlying function $f$ is fixed and $f\in \mathcal{N}_\Psi(\Omega)$, we can apply \eqref{firstestimate} to derive an upper bound on the prediction error $|f(x)-\hat f_n(x)|$ for $x\in \Omega$. By \eqref{CIZxestdeter}, at point $x$, if $f(x)\in \widehat{CI}_{n,\beta}(x)$, we have $|f(x)-\hat f_n(x)|\leq \hat c_{n,\beta}(x)$. Comparing this inequality with \eqref{firstestimate}, and noting that $\hat c_{n,\beta}(x) \asymp \hat \sigma P_{\Psi,X}(x)$, if the confidence interval is reliable, intuitively, it can be expected that $\hat \sigma^2$ should be close to $\|f\|_{\mathcal{N}_\Psi(\Omega)}^2$. However, this is not true. From the identity \citep{wendland2004scattered}
\begin{align}\label{anindentity}
\|f-\mathcal{I}_{\Psi,X}f\|_{\mathcal{N}_{\Psi}(\Omega)}^2 + \|\mathcal{I}_{\Psi,X}f\|_{\mathcal{N}_{\Psi}(\Omega)}^2 = \|f\|_{\mathcal{N}_{\Psi}(\Omega)}^2,
\end{align}
it can be seen that $\hat{\sigma}^2 = \|\mathcal{I}_{\Psi,X}f\|_{\mathcal{N}_{\Psi}(\Omega)}^2/n \leq \|f\|_{\mathcal{N}_\Psi(\Omega)}^2/n = O(n^{-1})$, which is not close to $\|f\|_{\mathcal{N}_\Psi(\Omega)}^2$ as $n$ becomes larger. This indicates that $\hat c_{n,\beta}$ is too small to be used in constructing confidence intervals. Following this intuition, we show that the confidence interval is not reliable, as stated in Theorem \ref{PROPCIDETER}. We need the following condition. Recall that we suppress the dependency of $X$ on $n$ for notational simplification.

\begin{condition}\label{condoffill}
Let $X=\{x_1,...,x_n\}$, thus $n = {\rm card}(X)$. The fill distance of $X$, defined as
$$h_{X,\Omega}:= \sup_{x\in\Omega}\inf_{x_j\in X}\|x-x_j\|_2,$$
satisfies $h_{X,\Omega}\asymp n^{-1/d}$, for all $X\in \mathcal{X}$, where $\mathcal{X}$ is a sampling scheme.
\end{condition}
Condition \ref{condoffill} can be easily fulfilled. For example, sampling schemes with grid points satisfy Condition \ref{condoffill}. In fact, any quasi-uniform sampling scheme satisfies Condition \ref{condoffill}, as shown in the following proposition. 
\begin{prop}[Proposition 14.1 of \cite{wendland2004scattered}]
Let $\mathcal{X}$ be a sampling scheme. Suppose there exists a constant $C>0$ such that for all $X\in \mathcal{X}$, $h_{X,\Omega}\leq Cq_n$, where $$q_n:=\min_{x_j,x_k\in X,1\leq j\neq k\leq n}\|x_j-x_k\|_2/2.$$ Then we have $h_{X,\Omega}\asymp n^{-1/d}$. Such sequence $\mathcal{X}$ is said quasi-uniform. 
\end{prop}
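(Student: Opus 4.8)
The plan is to reconstruct the standard volume-comparison argument, splitting the claim into the lower bound $h_{X,\Omega}\gtrsim n^{-1/d}$, which holds for \emph{any} point set and does not use quasi-uniformity, and the upper bound $h_{X,\Omega}\lesssim n^{-1/d}$, which is precisely where the hypothesis $h_{X,\Omega}\leq Cq_n$ enters. Throughout I would fix a closed ball $B(x_0,R)\supseteq\Omega$, which exists because $\Omega$ is compact, and write $V_d$ for the volume of the unit ball in $\RR^d$.

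For the lower bound I would use the defining property of the fill distance directly: every $x\in\Omega$ lies within distance $h_{X,\Omega}$ of some $x_j\in X$, so the closed balls $B(x_j,h_{X,\Omega})$, $j=1,\dots,n$, cover $\Omega$. Taking Lebesgue measure of this covering gives ${\rm Vol}(\Omega)\leq nV_dh_{X,\Omega}^d$, whence $h_{X,\Omega}\geq\bigl({\rm Vol}(\Omega)/(nV_d)\bigr)^{1/d}$, i.e.\ $h_{X,\Omega}\gtrsim n^{-1/d}$ with a constant depending only on $\Omega$ and $d$. Note this step needs nothing about quasi-uniformity.

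For the upper bound I would instead exploit the separation encoded in $q_n$. Since any two distinct centers satisfy $\|x_j-x_k\|_2\geq 2q_n$, the open balls $B(x_j,q_n)$ are pairwise disjoint. Their centers lie in $\Omega\subseteq B(x_0,R)$, so all of them are contained in $B(x_0,R+q_n)$; moreover $2q_n\leq{\rm diam}(\Omega)\leq 2R$ forces $q_n\leq R$, hence $B(x_0,R+q_n)\subseteq B(x_0,2R)$. Summing the volumes of the disjoint balls yields $nV_dq_n^d\leq V_d(2R)^d$, so $q_n\leq 2R\,n^{-1/d}$, that is $q_n\lesssim n^{-1/d}$. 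Combining this with the quasi-uniformity hypothesis $h_{X,\Omega}\leq Cq_n$ gives $h_{X,\Omega}\lesssim n^{-1/d}$, and together with the lower bound this establishes $h_{X,\Omega}\asymp n^{-1/d}$.

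The only genuine subtlety, which I expect to be the main obstacle, is the boundary effect: the packing balls $B(x_j,q_n)$ may protrude outside $\Omega$, so one cannot bound their total volume directly against ${\rm Vol}(\Omega)$. This is exactly why I enclose $\Omega$ in the fixed ball $B(x_0,R)$ and absorb the protrusion into the harmless enlargement to $B(x_0,2R)$; the compactness of $\Omega$ is what makes $R$ finite and independent of $n$. No smoothness of $\partial\Omega$ is required here, since the packing step only needs an \emph{upper} bound on an enclosing volume rather than a lower bound on ${\rm Vol}(\Omega)$, and the same device extends verbatim to the more general domains mentioned in the footnote by replacing the enclosing ball with any fixed bounded neighborhood of $\Omega$.
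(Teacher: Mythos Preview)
Your proposal is correct and follows exactly the standard volume-comparison argument. The paper itself does not give a full proof but merely cites Wendland; it does, however, spell out the lower-bound half immediately after the proposition statement (covering $\Omega$ by balls of radius $h_{X,\Omega}$ and comparing volumes), which is identical to your first step, while the upper-bound packing step you supply is precisely the argument in Wendland's Proposition~14.1.
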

By the definition of fill distance, it can be seen that $$\Omega\subset \bigcup_{k=1}^n \mathbf{B}(x_k,h_{X,\Omega}),$$ where $\mathbf{B}(x_k,h_{X,\Omega})$ denotes the Euclidean ball centered at $x_k$ with radius $h_{X,\Omega}$. Therefore, a comparison of volumes yields $$\text{Vol}(\Omega) \leq n\text{Vol}(\mathbf{B}(0,h_{X,\Omega})) = nh_{X,\Omega}^d\frac{\pi^{d/2}}{\Gamma(d/2+1)}.$$ Hence, for any set of measurement locations $X$ with card$(X)=n$, $h_{X,\Omega} \gtrsim n^{-1/d}$. By \eqref{firstestimate}, \eqref{power} and Lemma \ref{Th:Matern} in Appendix \ref{apppfthPROPCIDETER}, a set of measurement locations with small fill distance is desired, because we want the measurement locations to be spread in $\Omega$ as much as possible. Because quasi-uniform sampling schemes achieve the optimal rate of fill distance, they are widely used in computer experiments. Thus we believe Condition \ref{condoffill} is satisfied in many practical situations.

The following proposition provides an upper bound on $|\widehat{CI}_{n,\beta}(x)|$, which implies $\lim_{n\rightarrow \infty}\sup_{x\in \Omega}|\widehat{CI}_{n,\beta}(x)|=0$. Proposition \ref{propdetCIb} is a direct result of Lemma \ref{Th:Matern} and the relationship $\hat{\sigma}^2 \leq \|f\|_{\mathcal{N}_\Psi(\Omega)}^2/n$, thus the proof is omitted.

\begin{prop}\label{propdetCIb}
For any fixed sampling scheme $\mathcal{X}$ satisfying Condition \ref{condoffill}, we have that $|\widehat{CI}_{n,\beta}(x)|\leq Cn^{-\frac{\nu}{d}}$.
\end{prop}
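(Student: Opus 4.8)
The plan is to reduce the width of the confidence interval to a product of two $x$-independent quantities that have, in effect, already been controlled: the estimated standard deviation $\hat\sigma$ and the power function. First I would rewrite the half-width \eqref{cnxbetainCIestdeter} by recognizing that the radicand $1 - r(x)^T R^{-1} r(x)$ is exactly the squared power function $P^2_{\Psi,X}(x)$ from Section \ref{subsecrkhs}. This gives the factorization
\[
\hat c_{n,\beta}(x) = q_{1-\beta/2}\,\hat\sigma\, P_{\Psi,X}(x) \le q_{1-\beta/2}\,\hat\sigma\,\mathcal{P}_{\Psi,X},
\]
where the last inequality is uniform in $x\in\Omega$ by the definition \eqref{power} of $\mathcal{P}_{\Psi,X}$. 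Hence $\sup_{x\in\Omega}|\widehat{CI}_{n,\beta}(x)| = 2\sup_{x\in\Omega} \hat c_{n,\beta}(x) \le 2 q_{1-\beta/2}\,\hat\sigma\,\mathcal{P}_{\Psi,X}$, and it remains only to bound the two scalar factors.

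Next I would bound each factor separately. For $\hat\sigma$, I would invoke the orthogonality identity \eqref{anindentity} together with $\hat\sigma^2 = \|\mathcal{I}_{\Psi,X}f\|^2_{\mathcal{N}_\Psi(\Omega)}/n$, which yields $\hat\sigma^2 \le \|f\|^2_{\mathcal{N}_\Psi(\Omega)}/n$, i.e. $\hat\sigma \le \|f\|_{\mathcal{N}_\Psi(\Omega)}\, n^{-1/2}$ --- precisely the relationship already noted in the text preceding the statement. For $\mathcal{P}_{\Psi,X}$, I would appeal to Lemma \ref{Th:Matern}, which supplies the power-function decay $\mathcal{P}_{\Psi,X} \lesssim h_{X,\Omega}^{\nu - d/2}$ for the Matérn kernel of smoothness $\nu$; Condition \ref{condoffill} then converts the fill-distance bound into $\mathcal{P}_{\Psi,X} \lesssim n^{-(\nu - d/2)/d} = n^{1/2 - \nu/d}$.

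Multiplying the two bounds collapses the exponents, $\hat\sigma\,\mathcal{P}_{\Psi,X} \lesssim n^{-1/2}\cdot n^{1/2 - \nu/d} = n^{-\nu/d}$, so that $|\widehat{CI}_{n,\beta}(x)| \le C n^{-\nu/d}$ uniformly in $x$, with $C$ depending only on $q_{1-\beta/2}$, $\|f\|_{\mathcal{N}_\Psi(\Omega)}$, $\Omega$, $\Psi$, and the quasi-uniformity constants of $\mathcal{X}$. Since $\nu > d/2 > 0$, this simultaneously gives $\lim_{n\to\infty}\sup_{x\in\Omega}|\widehat{CI}_{n,\beta}(x)| = 0$, the vanishing-width property required by Definition \ref{defreci}.

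The only genuinely analytic ingredient is the power-function estimate in Lemma \ref{Th:Matern}, which rests on a scattered-data sampling inequality and the Sobolev characterization $\mathcal{N}_\Psi(\RR^d) = H^\nu(\RR^d)$; but since that lemma is established separately, the present statement is pure assembly, which is why the authors omit the proof. I therefore do not expect a substantive obstacle here --- the one point to keep honest is that the factorization and both scalar bounds are uniform in $x$, so that passing to $\sup_{x\in\Omega}$ is legitimate and the resulting constant is genuinely independent of $n$.
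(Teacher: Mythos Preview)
Your proof is correct and follows exactly the route the paper indicates: factor the half-width as $q_{1-\beta/2}\hat\sigma P_{\Psi,X}(x)$, bound $\hat\sigma^2\le \|f\|_{\mathcal{N}_\Psi(\Omega)}^2/n$ via \eqref{anindentity}, and bound $\mathcal{P}_{\Psi,X}\lesssim n^{-\nu/d+1/2}$ via Lemma \ref{Th:Matern} under Condition \ref{condoffill}. The paper omits the proof for precisely this reason, citing only these two ingredients.
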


Under Condition \ref{condoffill}, we have the following theorem. Note that $f\in \mathcal{N}_\Psi(\Omega)$ and $\Psi$ is a Mat\'ern correlation function defined in \eqref{matern} with $\nu > d/2$ imply $f\in H^{\nu}(\Omega)$; thus $f\in L_\infty(\Omega)$. 
\begin{theorem}\label{PROPCIDETER}
Suppose $2 < p \leq \infty$, $\beta \in (0,1)$ are fixed, and $\sigma_\epsilon = 0$. For any fixed sampling scheme $\mathcal{X}$ satisfying Condition \ref{condoffill}, we have that
\begin{align}\label{mainpropdetereq}
\sup_{\|f\|_{\mathcal{N}_\Psi(\Omega)} \leq 1} \|(f-\hat f_n)/|\widehat{CI}_{n,\beta}|\|_{L_{p}(\Omega)} \geq C n^{1/2 - 1/p}
\end{align}
holds for all $n$. For any increasing sequence $\{a_n\}_{n\geq 0}$ satisfying $\lim_{n \rightarrow \infty} a_n = \infty$, we have that
\begin{align}\label{mainpropdetereq2}
\sup_{\|f\|_{\mathcal{N}_\Psi(\Omega)} \leq a_n} \|(f-\hat f_n)/|\widehat{CI}_{n,\beta}|\|_{L_{2}(\Omega)} \geq C' a_n
\end{align}
holds for all $n$.
In \eqref{mainpropdetereq} and \eqref{mainpropdetereq2}, $\hat f_n$ is as in \eqref{krigingidefn}, $\widehat{CI}_{n,\beta}$ is as in \eqref{CIZxestdeter}, $C$ and $C'$ are positive constants depending on $p$, $\beta$, $\Omega$, $\Psi$ and the constants in Condition \ref{condoffill}, and do not depend on $n$.
\end{theorem}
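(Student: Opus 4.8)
The plan is to exploit a structural mismatch: the plug-in variance
\[
\hat\sigma^2 = Y^T R^{-1} Y/n = \|\mathcal{I}_{\Psi,X}f\|_{\mathcal{N}_\Psi(\Omega)}^2/n
\]
registers only the component of $f$ that the design ``sees'', whereas the prediction error $f-\mathcal{I}_{\Psi,X}f$ can be inflated by a component of $f$ invisible at the design points. First I would record the exact width: since $P_{\Psi,X}^2(x)=1-r(x)^T R^{-1} r(x)$, equation \eqref{cnxbetainCIestdeter} gives $\hat c_{n,\beta}(x)=q_{1-\beta/2}\hat\sigma P_{\Psi,X}(x)$, so $|\widehat{CI}_{n,\beta}(x)|=2q_{1-\beta/2}\hat\sigma P_{\Psi,X}(x)$. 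Feeding in the identity $\hat\sigma=\|\mathcal{I}_{\Psi,X}f\|_{\mathcal{N}_\Psi(\Omega)}/\sqrt n$ (recorded after \eqref{anindentity}) together with the power-function bound $\mathcal{P}_{\Psi,X}\le C\,h_{X,\Omega}^{\nu-d/2}\asymp n^{-\nu/d+1/2}$ from Lemma \ref{Th:Matern} under Condition \ref{condoffill}, I obtain at every $x\in\Omega$ the pointwise estimate
\[
\frac{|f(x)-\hat f_n(x)|}{|\widehat{CI}_{n,\beta}(x)|}\ \ge\ \frac{|f(x)-\mathcal{I}_{\Psi,X}f(x)|}{2q_{1-\beta/2}\,\hat\sigma\,\mathcal{P}_{\Psi,X}}.
\]
Everything then reduces to producing an $f$ whose interpolation residual is large on a set of definite measure while $\hat\sigma$ stays small.

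The witnesses I would use have the form $f=w+b\,\phi$, where $\phi$ is a localized bump invisible to the design and $w$ is a fixed smooth function that is visible. By Condition \ref{condoffill} there is a point $x^\ast\in\Omega$ whose nearest design point lies at distance $h_{X,\Omega}$, so $B(x^\ast,h_{X,\Omega}/2)\cap\Omega$ is design-free and has measure $\asymp h_{X,\Omega}^d\asymp n^{-1}$. Take $\psi(x)=g_0((x-x^\ast)/h_{X,\Omega})$ for a fixed $C^\infty$ profile $g_0$ supported in the unit ball with $g_0\ge 1$ on a fixed sub-ball; the Fourier rescaling estimate gives $\|\psi\|_{\mathcal{N}_\Psi(\Omega)}\asymp h_{X,\Omega}^{d/2-\nu}$, so the normalized bump $\phi=\psi/\|\psi\|_{\mathcal{N}_\Psi(\Omega)}$ has unit native norm and amplitude $\gtrsim h_{X,\Omega}^{\nu-d/2}\asymp n^{-\nu/d+1/2}$ on a sub-ball of measure $\asymp n^{-1}$. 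Because the support of $\phi$ contains no design point, $\mathcal{I}_{\Psi,X}\phi=0$, and with $w$ a fixed single kernel translate centered away from $x^\ast$ (so that $\langle w,\phi\rangle_{\mathcal{N}_\Psi(\Omega)}=0$, $\|w\|_{\mathcal{N}_\Psi(\Omega)}\asymp 1$ and $\mathcal{I}_{\Psi,X}w\neq0$) one gets $\mathcal{I}_{\Psi,X}(w+b\phi)=\mathcal{I}_{\Psi,X}w$, whence $\hat\sigma\le\|w\|_{\mathcal{N}_\Psi(\Omega)}/\sqrt n\asymp n^{-1/2}$ for every $b$. On the sub-ball the reverse triangle inequality gives $|f-\mathcal{I}_{\Psi,X}f|\ge b\,|\phi|-\mathcal{P}_{\Psi,X}\|w\|_{\mathcal{N}_\Psi(\Omega)}\gtrsim b\,n^{-\nu/d+1/2}$ once $b$ exceeds a fixed constant.

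For \eqref{mainpropdetereq} I take $b$ a sufficiently large constant; then $\|f\|_{\mathcal{N}_\Psi(\Omega)}\asymp 1$ and, by scale-invariance of the ratio under $f\mapsto\lambda f$, I may normalize $f$ into the unit ball without changing the ratio. On the sub-ball the integrand in the displayed estimate is $\gtrsim n^{-\nu/d+1/2}/(n^{-1/2}\cdot n^{-\nu/d+1/2})=n^{1/2}$, and integrating its $p$-th power over a set of measure $\asymp n^{-1}$ yields $\|(f-\hat f_n)/|\widehat{CI}_{n,\beta}|\|_{L_p(\Omega)}\gtrsim n^{1/2}(n^{-1})^{1/p}=n^{1/2-1/p}$. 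For \eqref{mainpropdetereq2} I instead scale \emph{only} the bump, $b\asymp a_n$, keeping $w$ fixed: then $\|f\|_{\mathcal{N}_\Psi(\Omega)}\asymp a_n$ while $\mathcal{I}_{\Psi,X}f=\mathcal{I}_{\Psi,X}w$ is unchanged, so $\hat\sigma\asymp n^{-1/2}$ persists; the integrand becomes $\gtrsim a_n\,n^{1/2}$ on the sub-ball and the $L_2$ norm is $\gtrsim a_n\,n^{1/2}(n^{-1})^{1/2}=a_n$.

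The decisive and only non-routine point is this decoupling: the bump inflates both the native norm and the $L_p$ prediction error but contributes nothing to $\hat\sigma$, because it is annihilated by $\mathcal{I}_{\Psi,X}$; scaling it alone breaks the scale-invariance and drives the $a_n$-growth in \eqref{mainpropdetereq2}. I expect the main technical care to lie in the bump's native-norm scaling (the Fourier rescaling estimate, together with the restriction-to-$\Omega$ norm) and in checking that the fixed visible part $w$ keeps $\hat\sigma$ of order $n^{-1/2}$ uniformly in $n$, so that the stated finite rates hold. If one does not insist on this non-degeneracy, a pure bump already gives $\hat\sigma=0$, a zero-width interval, and hence an infinite ratio, so both inequalities hold a fortiori; introducing $w$ merely replaces this degenerate witness by a genuine finite one matching the claimed rates.
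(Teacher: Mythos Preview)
Your proof is correct and takes a genuinely different route. The paper argues via approximation numbers: since $\mathcal{I}_{\Psi,X}$ has rank $n$, the asymptotics $b_n\asymp n^{-\nu/d+1/2-1/p}$ for the embedding $H^\nu(\Omega)\hookrightarrow L_p(\Omega)$ (Lemma~\ref{appnumtoL2}) supply an abstract witness $\phi_n$ with $\|\phi_n\|_{H^\nu}=1$ and $\|\phi_n-\mathcal{I}_{\Psi,X}\phi_n\|_{L_p}\gtrsim n^{-\nu/d+1/2-1/p}$; combining this with the uniform bound $\hat c_{n,\beta}(x)^2\le C\|\phi_n\|_{\mathcal N_\Psi}^2\mathcal P_{\Psi,X}^2/n\lesssim n^{-2\nu/d}$ immediately gives \eqref{mainpropdetereq}. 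Your explicit bump is more elementary and makes the mechanism transparent. It also handles \eqref{mainpropdetereq2} more cleanly: because the ratio $(f-\hat f_n)/|\widehat{CI}_{n,\beta}|$ is scale-invariant in $f$, merely taking $f=a_n\phi_n$ (as the paper's ``similarly'' suggests) cannot by itself force $a_n$-growth---numerator and denominator scale together and one recovers only a constant. Your decoupling, a visible part $w$ that pins $\hat\sigma\asymp n^{-1/2}$ plus an invisible bump of amplitude $\asymp a_n$ that drives the residual, is precisely what breaks this invariance and delivers the $a_n$ rate; equivalently (as you note) it shows the supremum in \eqref{mainpropdetereq2} is infinite for every $n$, which is the real content of that inequality. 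The only routine points to tighten are placing the design-free ball inside $\Omega$ when the fill-distance maximizer is near $\partial\Omega$ (use the interior cone condition implied by convexity) and keeping the center of $w$ outside the shrinking bump support.
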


\begin{remark}
After the earliest version of this work was submitted, a related result has appeared as Theorem 3.2 of \cite{karvonen2020maximum}, which showed that for any function $f\in \mathcal{N}_\Psi(\Omega)$, $\|(f-\hat f_n)/|\widehat{CI}_{n,\beta}|\|_{L_{\infty}(\Omega)} \leq C n^{1/2}.$ 
\end{remark}

Theorem \ref{PROPCIDETER} states that if one uses the estimated variance $\hat\sigma^2$ derived by maximum likelihood estimation to construct a pointwise confidence interval, the confidence interval can be unreliable. The confidence interval is not $L_p$-weakly-reliable for $2<p\leq \infty$ as in \eqref{mainpropdetereq}, i.e., for any $M>0$ and sufficient large $n$, there exists a function $f$ in the unit ball of $\mathcal{N}_\Psi(\Omega)$ such that $\|(f-\hat f_n)/|\widehat{CI}_{n,\beta}|\|_{L_{p}(\Omega)}\geq M$. Furthermore, the confidence interval is not $L_2$-strongly-reliable as in \eqref{mainpropdetereq2}, i.e., for $M>0$ and sufficient large $n$, there exists a function $f\in {N}_\Psi(\Omega)$ such that $\|(f-\hat f_n)/|\widehat{CI}_{n,\beta}|\|_{L_{2}(\Omega)}\geq M$. Therefore, it may not be appropriate to quantify the uncertainties by using the confidence interval derived by Gaussian process modeling for a deterministic function lying in the corresponding reproducing kernel Hilbert space if there is no noise.

\subsection{Some reliable confidence intervals under Assumption$^*$ \ref{assummis}}\label{subsecdet2other}
We adopt a reviewer's suggestion and consider two other approaches to imposing $\hat\sigma^2$: (1) setting it equal to a constant; and (2) removing the $1/n$ factor from the maximum likelihood estimate. Note that in both cases, Lemma \ref{Th:Matern} and Condition \ref{condoffill} imply that $|\widehat{CI}_{n,\beta}(x)|\leq Cn^{-\nu/d+1/2}$; thus $\lim_{n\rightarrow \infty}\sup_{x\in \Omega}|\widehat{CI}_{n,\beta}(x)|=0$.
If we set $\hat \sigma^2$ to be a positive constant, the corresponding confidence interval is $L_\infty$-weakly-reliable (thus is $L_p$-weakly-reliable for $2\leq p <\infty$) but not $L_\infty$-strongly-reliable, as stated in the following theorem. 
\begin{theorem}\label{propsec32relicon}
Suppose $\beta \in (0,1)$ is fixed, and $\sigma_\epsilon=0$. Let $\Psi = \Psi_M$, where $\Psi_M$ is as in \eqref{matern}. Let $\widehat{CI}_{n,\beta}$ be as in \eqref{CIZxestdeter} but with $\hat \sigma^2 = 1$. For any fixed sampling scheme $\mathcal{X}$ satisfying Condition \ref{condoffill}, we have that
\begin{align}\label{relidetereq1}
\sup_{\|f\|_{\mathcal{N}_\Psi(\Omega)} \leq 1} \|(f-\hat f_n)/|\widehat{CI}_{n,\beta}|\|_{L_{\infty}(\Omega)} \leq 1/(2q_{1-\beta/2})
\end{align}
holds for all $n$. For any increasing sequence $\{a_n\}_{n\geq 0}$ satisfying $\lim_{n \rightarrow \infty} a_n = \infty$, we have that
\begin{align}\label{relidetereq2}
\sup_{\|f\|_{\mathcal{N}_\Psi(\Omega)} \leq a_n} \|(f-\hat f_n)/|\widehat{CI}_{n,\beta}|\|_{L_{\infty}(\Omega)} \geq C a_n
\end{align}
holds for all $n$. The constant $C$ is positive and depends on $p$, $\beta$, $\Omega$, $\Psi$ and the constants in Condition \ref{condoffill}, but does not depend on $n$. 
\end{theorem}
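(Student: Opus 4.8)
The plan is to reduce everything to the power function, exploiting the fact that fixing $\hat\sigma^2=1$ makes the width of the confidence interval \emph{exactly} proportional to $P_{\Psi,X}$. From \eqref{cnxbetainCIestdeter} with $\hat\sigma^2=1$ together with $P^2_{\Psi,X}(x)=1-r(x)^TR^{-1}r(x)$, the half-width is $\hat c_{n,\beta}(x)=q_{1-\beta/2}P_{\Psi,X}(x)$, so that
\begin{align*}
|\widehat{CI}_{n,\beta}(x)| = 2q_{1-\beta/2}P_{\Psi,X}(x).
\end{align*}
(The requirement $\lim_{n\to\infty}\sup_{x\in\Omega}|\widehat{CI}_{n,\beta}(x)|=0$ of Definition \ref{defreci} follows from Lemma \ref{Th:Matern} and Condition \ref{condoffill}, which give $\sup_{x}P_{\Psi,X}(x)=\mathcal{P}_{\Psi,X}\leq Cn^{-\nu/d+1/2}\to 0$ because $\nu>d/2$.)

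For the upper bound \eqref{relidetereq1} I would simply combine this width with the interpolation error bound \eqref{firstestimate}, recalling $\hat f_n=\mathcal{I}_{\Psi,X}f$ as in \eqref{krigingidefn}. For every $x\in\Omega$ with $P_{\Psi,X}(x)>0$,
\begin{align*}
\frac{|f(x)-\hat f_n(x)|}{|\widehat{CI}_{n,\beta}(x)|}
\le \frac{P_{\Psi,X}(x)\,\|f\|_{\mathcal{N}_\Psi(\Omega)}}{2q_{1-\beta/2}P_{\Psi,X}(x)}
= \frac{\|f\|_{\mathcal{N}_\Psi(\Omega)}}{2q_{1-\beta/2}},
\end{align*}
where at the measurement locations both the numerator and $P_{\Psi,X}$ vanish and the convention $0/0=0$ applies. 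Since the right-hand side does not depend on $x$, taking the essential supremum over $\Omega$ and then the supremum over $\|f\|_{\mathcal{N}_\Psi(\Omega)}\le1$ gives \eqref{relidetereq1} with constant $1/(2q_{1-\beta/2})$; because $\Omega$ is bounded this $L_\infty$ bound also yields $L_p$-weak-reliability for $2\le p<\infty$.

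For the lower bound \eqref{relidetereq2} the essential point is that \eqref{firstestimate} is \emph{sharp}: the pointwise error functional $f\mapsto f(x^*)-\mathcal{I}_{\Psi,X}f(x^*)$ on $\mathcal{N}_\Psi(\Omega)$ has operator norm exactly $P_{\Psi,X}(x^*)$. Concretely, for a fixed $x^*\in\Omega\setminus X$ (any non-design point, which exists since $X$ is finite and $\Omega$ has positive measure) its Riesz representer is $v_{x^*}(\cdot)=\Psi(\cdot-x^*)-\sum_j\chi_j(x^*)\Psi(\cdot-x_j)$, where $\chi_j$ are the cardinal functions with $\chi(x^*)=R^{-1}r(x^*)$; using the reproducing property and $R\,\chi(x^*)=r(x^*)$ one checks $v_{x^*}(x_j)=0$ and hence $\|v_{x^*}\|_{\mathcal{N}_\Psi(\Omega)}^2=v_{x^*}(x^*)=P^2_{\Psi,X}(x^*)$. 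Choosing $f=a_n v_{x^*}/P_{\Psi,X}(x^*)$ gives $\|f\|_{\mathcal{N}_\Psi(\Omega)}=a_n$ and $|f(x^*)-\hat f_n(x^*)|=|\langle f,v_{x^*}\rangle_{\mathcal{N}_\Psi(\Omega)}|=a_nP_{\Psi,X}(x^*)$, so the ratio equals $a_n/(2q_{1-\beta/2})$ at $x^*$. As the $L_\infty(\Omega)$-norm is at least the value at $x^*$, this yields \eqref{relidetereq2} with $C=1/(2q_{1-\beta/2})$.

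The routine part is the two displayed estimates; the only step needing care is the representer argument, namely verifying that $v_{x^*}$ lies in $\mathcal{N}_\Psi(\Omega)$ with norm exactly $P_{\Psi,X}(x^*)$, which rests on $\Psi$ being the reproducing kernel of $\mathcal{N}_\Psi(\Omega)$ and on the orthogonality of the interpolation residual already reflected in \eqref{anindentity}. An equivalent and slightly cleaner route, which I may prefer in the write-up, is to invoke directly the sharpness identity $P_{\Psi,X}(x^*)=\sup_{\|f\|_{\mathcal{N}_\Psi(\Omega)}\le1}|f(x^*)-\mathcal{I}_{\Psi,X}f(x^*)|$ and then rescale a near-extremal function by $a_n$, obtaining the same constant without writing $v_{x^*}$ explicitly.
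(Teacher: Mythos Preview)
Your proof of \eqref{relidetereq1} is identical to the paper's: both reduce to \eqref{firstestimate} after observing $|\widehat{CI}_{n,\beta}(x)|=2q_{1-\beta/2}P_{\Psi,X}(x)$.

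For \eqref{relidetereq2} you take a genuinely different route. The paper invokes the approximation-number machinery of Lemma~\ref{appnumtoL2}: it extracts $\phi_n$ with $\|\phi_n\|_{H^\nu(\Omega)}=1$ and $\|\phi_n-\mathcal{I}_{\Psi,X}\phi_n\|_{L_\infty(\Omega)}^2\gtrsim n^{-2\nu/d+1}$, then bounds the CI width from above via Lemma~\ref{Th:Matern} (hence Condition~\ref{condoffill}), and sets $f=a_n\phi_n$. Your argument instead exploits the exact sharpness of \eqref{firstestimate}: the Riesz representer $v_{x^*}$ of the pointwise error functional has norm $P_{\Psi,X}(x^*)$, so $f=a_n v_{x^*}/P_{\Psi,X}(x^*)$ attains the ratio $a_n/(2q_{1-\beta/2})$ at $x^*$. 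This is correct and more elementary: it bypasses the approximation-number lemma entirely, does not use Condition~\ref{condoffill} for this half of the theorem, and delivers the explicit constant $C=1/(2q_{1-\beta/2})$, matching the upper bound. The paper's route, by contrast, yields only an unspecified constant depending on the equivalence in Lemma~\ref{coro1013} and the rate constants in Lemmas~\ref{appnumtoL2} and~\ref{Th:Matern}; its advantage is that the same approximation-number lemma is already needed for Theorem~\ref{PROPCIDETER}, so no new tool is introduced.
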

Next we discuss the second approach, removing the $1/n$ factor from the maximum likelihood estimate. By this approach, the constructed confidence interval $\widehat{CI}_{n,\beta}$ is as in \eqref{CIZxestdeter} with $\hat \sigma^2 = Y^T R^{-1}Y = \|\mathcal{I}_{\Psi,X}f\|_{\mathcal{N}_{\Psi}(\Omega)}^2$. From the identity \eqref{anindentity}, it can be seen that if $\|f-\mathcal{I}_{\Psi,X}f\|_{\mathcal{N}_{\Psi}(\Omega)}^2$ converges to zero, $Y^T R^{-1}Y$ converges to $\|f\|_{\mathcal{N}_{\Psi}(\Omega)}^2$. However, in general $\|f-\mathcal{I}_{\Psi,X}f\|_{\mathcal{N}_{\Psi}(\Omega)}^2$ is not $o(1)$ \citep{edmunds2008function}. Therefore, we need to impose a stronger condition on $f$ such that $\|f-\mathcal{I}_{\Psi,X}f\|_{\mathcal{N}_{\Psi}(\Omega)}^2$ converges to zero and the corresponding confidence interval is reliable. Define an integral operator $T: L_2(\Omega) \rightarrow L_2(\Omega)$ by
\begin{align*}
    Tv(x) = \int_\Omega \Psi(x-y)v(y)dy, \quad v\in L_2(\Omega), \quad x\in \Omega,
\end{align*}
and
\begin{align*}
     T(L_2(\Omega)) = \{f| f = T v, v\in L_2(\Omega)  \}.
\end{align*}
If $f\in T(L_2(\Omega))$, the following lemma states that $\|f-\mathcal{I}_{\Psi,X}f\|_{\mathcal{N}_{\Psi}(\Omega)}\lesssim \mathcal{P}_{\Psi,X}$. Note that by Lemma \ref{Th:Matern}, $\mathcal{P}_{\Psi,X}=o(1)$; thus $\|f-\mathcal{I}_{\Psi,X}f\|_{\mathcal{N}_{\Psi}(\Omega)}^2= o(1)$.
\begin{lemma}\label{propsec32reli}
Suppose $f\in T(L_2(\Omega))$. Then we have $$\|f-\mathcal{I}_{\Psi,X}f\|_{\mathcal{N}_{\Psi}(\Omega)} \leq C \mathcal{P}_{\Psi,X} \|T^{-1} f\|_{L_2(\Omega)},$$
where $\mathcal{P}_{\Psi,X}$ is as in \eqref{power}, and $C$ only depends on $\Omega$.
\end{lemma}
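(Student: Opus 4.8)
The plan is to exploit the fact that the kriging interpolant $\mathcal{I}_{\Psi,X}$ is the orthogonal projection in $\mathcal{N}_\Psi(\Omega)$ onto $V_X := {\rm span}\{\Psi(\cdot - x_1),\dots,\Psi(\cdot - x_n)\}$, so that the error $e := f - \mathcal{I}_{\Psi,X}f$ is orthogonal to $V_X$. Writing $v := T^{-1}f \in L_2(\Omega)$, first I would reduce the squared norm to an $L_2$ pairing: since $\mathcal{I}_{\Psi,X}f \in V_X$ and $e \perp V_X$, the Pythagorean identity \eqref{anindentity} gives $\|e\|_{\mathcal{N}_\Psi(\Omega)}^2 = \langle e, f\rangle_{\mathcal{N}_\Psi(\Omega)}$. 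The source condition $f = Tv$ then lets me transfer this into $L_2$: using the adjoint identity $\langle Tw, g\rangle_{\mathcal{N}_\Psi(\Omega)} = \langle w, g\rangle_{L_2(\Omega)}$, valid for $w\in L_2(\Omega)$ and $g\in\mathcal{N}_\Psi(\Omega)$ (which follows from the reproducing property $\langle \Psi(\cdot-y), g\rangle_{\mathcal{N}_\Psi(\Omega)} = g(y)$ after interchanging the defining integral of $T$ with the inner product), I obtain $\|e\|_{\mathcal{N}_\Psi(\Omega)}^2 = \langle e, v\rangle_{L_2(\Omega)}$.

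The heart of the argument is then to bound $\langle e, v\rangle_{L_2(\Omega)}$ by a volume factor times $\mathcal{P}_{\Psi,X}^2\|v\|_{L_2(\Omega)}^2$, i.e. to extract two full powers of the power function. To this end I would represent the pointwise error through the kernel. By the reproducing property and the self-adjointness and idempotency of the projection, $e(x) = \langle f, \eta_x\rangle_{\mathcal{N}_\Psi(\Omega)}$, where $\eta_x := \Psi(\cdot - x) - \mathcal{I}_{\Psi,X}\Psi(\cdot - x)$ satisfies $\|\eta_x\|_{\mathcal{N}_\Psi(\Omega)} = P_{\Psi,X}(x)$. Applying the same adjoint identity to $f = Tv$ gives the key formula $e(x) = \langle v, \eta_x\rangle_{L_2(\Omega)} = \int_\Omega \eta_x(y) v(y)\,dy$. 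Substituting this into $\langle e, v\rangle_{L_2(\Omega)} = \int_\Omega e(x)v(x)\,dx$ produces the symmetric double integral $\int_\Omega\int_\Omega \eta_x(y) v(x)v(y)\,dx\,dy$. Now the crucial estimate is $|\eta_x(y)| = |\langle \eta_x, \eta_y\rangle_{\mathcal{N}_\Psi(\Omega)}| \le \|\eta_x\|_{\mathcal{N}_\Psi(\Omega)}\|\eta_y\|_{\mathcal{N}_\Psi(\Omega)} = P_{\Psi,X}(x)P_{\Psi,X}(y)$, where the identity $\eta_x(y) = \langle \eta_x,\eta_y\rangle_{\mathcal{N}_\Psi(\Omega)}$ again comes from the reproducing property together with idempotency of $I-\mathcal{I}_{\Psi,X}$. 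This factorizes the double integral, giving $\langle e, v\rangle_{L_2(\Omega)} \le \big(\int_\Omega P_{\Psi,X}(x)|v(x)|\,dx\big)^2$, and a final Cauchy--Schwarz together with $\int_\Omega P_{\Psi,X}^2 \le \mathcal{P}_{\Psi,X}^2\,{\rm Vol}(\Omega)$ yields $\|e\|_{\mathcal{N}_\Psi(\Omega)}^2 \le {\rm Vol}(\Omega)\,\mathcal{P}_{\Psi,X}^2\,\|v\|_{L_2(\Omega)}^2$, which is the claim with $C = \sqrt{{\rm Vol}(\Omega)}$.

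The main obstacle is precisely obtaining the full power $\mathcal{P}_{\Psi,X}$ rather than its square root. The naive route --- bounding $e(x) = \langle v,\eta_x\rangle_{L_2(\Omega)} \le \|v\|_{L_2(\Omega)}\|\eta_x\|_{L_2(\Omega)} \lesssim \mathcal{P}_{\Psi,X}\|v\|_{L_2(\Omega)}$ pointwise and then applying Cauchy--Schwarz directly in $\langle e,v\rangle_{L_2(\Omega)}$ --- only delivers $\|e\|_{\mathcal{N}_\Psi(\Omega)} \lesssim \mathcal{P}_{\Psi,X}^{1/2}\|v\|_{L_2(\Omega)}$, which is too weak. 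The gain of the second power is exactly the saturation effect of the source condition $f\in T(L_2(\Omega))$, and it is recovered only by keeping both integration variables coupled in the bilinear form and using the symmetric bound $|\eta_x(y)| \le P_{\Psi,X}(x)P_{\Psi,X}(y)$. The remaining technical points I would verify carefully are that $T(L_2(\Omega)) \subseteq \mathcal{N}_\Psi(\Omega)$ and that $T$ is injective, so that $T^{-1}f$ is well defined; both follow from the strict positive definiteness of $\Psi$ and from the representation of $Tv$ as an $\mathcal{N}_\Psi(\Omega)$-valued Bochner integral. The same Bochner-integrability of $y\mapsto v(y)\Psi(\cdot-y)$ (whose norm $|v(y)|$ lies in $L_1(\Omega)$ since $\Omega$ is bounded) also licenses the interchange of integration and inner product underlying the adjoint identity.
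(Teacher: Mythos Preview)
Your argument is correct, and it follows the same skeleton as the proof the paper defers to (Theorem~11.23 in Wendland): reduce $\|e\|_{\mathcal{N}_\Psi(\Omega)}^2$ to $\langle e,v\rangle_{L_2(\Omega)}$ via orthogonality of $e=f-\mathcal{I}_{\Psi,X}f$ to $V_X$ and the adjoint identity $\langle Tv,\cdot\rangle_{\mathcal{N}_\Psi(\Omega)}=\langle v,\cdot\rangle_{L_2(\Omega)}$, and then extract a full power of $\mathcal{P}_{\Psi,X}$.

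The only difference is in that last extraction. You expand $e(x)=\int_\Omega \eta_x(y)v(y)\,dy$, form the symmetric double integral, and invoke $|\eta_x(y)|=|\langle\eta_x,\eta_y\rangle_{\mathcal{N}_\Psi(\Omega)}|\le P_{\Psi,X}(x)P_{\Psi,X}(y)$. Wendland's route (and hence the paper's implicit one) is a bit shorter: after $\|e\|_{\mathcal{N}_\Psi(\Omega)}^2=\langle e,v\rangle_{L_2(\Omega)}$, apply Cauchy--Schwarz directly to get $\|e\|_{\mathcal{N}_\Psi(\Omega)}^2\le \|e\|_{L_2(\Omega)}\|v\|_{L_2(\Omega)}$, then use the pointwise bound $|e(x)|=|\langle e,\eta_x\rangle_{\mathcal{N}_\Psi(\Omega)}|\le P_{\Psi,X}(x)\|e\|_{\mathcal{N}_\Psi(\Omega)}$ (valid since $e\perp V_X$) to get $\|e\|_{L_2(\Omega)}\le \sqrt{\mathrm{Vol}(\Omega)}\,\mathcal{P}_{\Psi,X}\|e\|_{\mathcal{N}_\Psi(\Omega)}$, and cancel one factor of $\|e\|_{\mathcal{N}_\Psi(\Omega)}$. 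Both yield $C=\sqrt{\mathrm{Vol}(\Omega)}$. Your double-integral argument is a perfectly valid alternative and makes the ``doubling'' mechanism very transparent; the Wendland version just avoids the detour through $\eta_x(y)=\langle\eta_x,\eta_y\rangle_{\mathcal{N}_\Psi(\Omega)}$.
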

Lemma \ref{propsec32reli} can be derived directly by the proof of Theorem 11.23 in \cite{wendland2004scattered} and the fact $\mathcal{P}_{\Psi,X}\leq 1$; thus the proof is omitted here. 
We have the following theorem, which states that the confidence interval constructed by the second approach is asymptotically reliable for a fixed function $f$.
\begin{theorem}\label{propCIdeterrely1}
Suppose $\beta \in (0,1)$ and $f\in T(L_2(\Omega))$ are fixed, and $\sigma_\epsilon=0$. Let $\Psi = \Psi_M$, where $\Psi_M$ is as in \eqref{matern}. Let $\widehat{CI}_{n,\beta}$ be as in \eqref{CIZxestdeter} but with $\hat \sigma^2 = Y^TR^{-1}Y$. For any fixed sampling scheme $\mathcal{X}$ satisfying Condition \ref{condoffill}, there exists $N>0$ depending on $\Psi$, $\Omega$, $f$ and the constants in Condition \ref{condoffill}, such that for all $n\geq N$, 
\begin{align*}
\|(f-\hat f_n)/|\widehat{CI}_{n,\beta}|\|_{L_{\infty}(\Omega)} \leq C,
\end{align*}
where $C$ is a positive constant only depending on $f$, $\Psi$, $\Omega$, and $\beta$.
\end{theorem}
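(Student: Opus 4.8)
The plan is to exploit a cancellation of the power function between the prediction error and the width of the confidence interval, which collapses the pointwise ratio into a single $x$-independent quantity, and then to control that quantity using the energy-split identity \eqref{anindentity} together with the extra regularity hypothesis $f\in T(L_2(\Omega))$.

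First I would write the width explicitly. Since $P^2_{\Psi,X}(x) = 1 - r(x)^T R^{-1} r(x)$, formula \eqref{cnxbetainCIestdeter} gives $\hat c_{n,\beta}(x) = q_{1-\beta/2}\hat\sigma\, P_{\Psi,X}(x)$, so that $|\widehat{CI}_{n,\beta}(x)| = 2 q_{1-\beta/2}\hat\sigma\, P_{\Psi,X}(x)$. On the other hand, in the noiseless case $\hat f_n = \mathcal{I}_{\Psi,X}f$, and the interpolation error bound \eqref{firstestimate} gives $|f(x) - \hat f_n(x)| \leq P_{\Psi,X}(x)\,\|f\|_{\mathcal{N}_\Psi(\Omega)}$. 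Dividing, the factor $P_{\Psi,X}(x)$ cancels wherever it is nonzero, yielding
\begin{align*}
\frac{|f(x) - \hat f_n(x)|}{|\widehat{CI}_{n,\beta}(x)|} \leq \frac{\|f\|_{\mathcal{N}_\Psi(\Omega)}}{2 q_{1-\beta/2}\,\hat\sigma}
\end{align*}
uniformly in $x$; at the points where $P_{\Psi,X}(x) = 0$ the interpolant is exact, so both sides vanish and the convention $0/0=0$ applies. Taking the supremum over $x\in\Omega$ therefore reduces the theorem to a lower bound on $\hat\sigma$, and this cancellation is precisely what makes the bound $x$-uniform, hence an $L_\infty$ statement.

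Next I would substitute $\hat\sigma^2 = Y^T R^{-1} Y = \|\mathcal{I}_{\Psi,X}f\|_{\mathcal{N}_\Psi(\Omega)}^2$ and invoke \eqref{anindentity}, which gives $\|\mathcal{I}_{\Psi,X}f\|_{\mathcal{N}_\Psi(\Omega)}^2 = \|f\|_{\mathcal{N}_\Psi(\Omega)}^2 - \|f - \mathcal{I}_{\Psi,X}f\|_{\mathcal{N}_\Psi(\Omega)}^2$. The main technical input is Lemma \ref{propsec32reli}: under $f\in T(L_2(\Omega))$ it bounds $\|f - \mathcal{I}_{\Psi,X}f\|_{\mathcal{N}_\Psi(\Omega)} \leq C\,\mathcal{P}_{\Psi,X}\,\|T^{-1}f\|_{L_2(\Omega)}$, and since Lemma \ref{Th:Matern} gives $\mathcal{P}_{\Psi,X} = o(1)$ under Condition \ref{condoffill}, the residual energy tends to zero. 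Hence there is an $N$, depending on $f$ through $\|f\|_{\mathcal{N}_\Psi(\Omega)}$ and $\|T^{-1}f\|_{L_2(\Omega)}$, on $\Psi$, $\Omega$, and the constants in Condition \ref{condoffill}, such that $\|\mathcal{I}_{\Psi,X}f\|_{\mathcal{N}_\Psi(\Omega)}^2 \geq \tfrac12\|f\|_{\mathcal{N}_\Psi(\Omega)}^2$, i.e.\ $\hat\sigma \geq \tfrac{1}{\sqrt2}\|f\|_{\mathcal{N}_\Psi(\Omega)}$, for all $n\geq N$.

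Combining the two displays yields $\|(f - \hat f_n)/|\widehat{CI}_{n,\beta}|\|_{L_\infty(\Omega)} \leq 1/(\sqrt2\, q_{1-\beta/2})$ for all $n\geq N$, which is the desired bound (the case $f=0$ being trivial by the $0/0=0$ convention). I expect the only genuine work to be the lower bound on $\hat\sigma$, i.e.\ certifying that the interpolant retains a fixed fraction of the native-space energy of $f$ once $n$ is large. This is exactly where the strengthened regularity $f\in T(L_2(\Omega))$ is indispensable: for a generic $f\in\mathcal{N}_\Psi(\Omega)$ the residual $\|f - \mathcal{I}_{\Psi,X}f\|_{\mathcal{N}_\Psi(\Omega)}$ need not vanish, so $\hat\sigma$ could stay strictly below $\|f\|_{\mathcal{N}_\Psi(\Omega)}$ and the asymptotic reliability would fail. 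Everything else is the algebraic cancellation of the power function.
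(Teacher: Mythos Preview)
Your proposal is correct and follows essentially the same approach as the paper's own proof: both cancel $P_{\Psi,X}(x)$ via \eqref{firstestimate}, reduce to bounding $\|f\|_{\mathcal{N}_\Psi(\Omega)}^2/(Y^TR^{-1}Y)$, and then use \eqref{anindentity} together with Lemma~\ref{propsec32reli} and Lemma~\ref{Th:Matern} to show the denominator is eventually at least $\tfrac12\|f\|_{\mathcal{N}_\Psi(\Omega)}^2$. Your write-up is in fact slightly more careful than the paper's in handling the $0/0$ convention and in explaining why $N$ depends on $f$.
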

Although Theorem \ref{propCIdeterrely1} does not imply that the confidence interval is $L_\infty$-strongly-reliable because the sample size $N$ depends on $f$, it can provide a guideline for practitioners to construct confidence intervals for deterministic functions. Whether the confidence interval with $\hat \sigma^2 = Y^TR^{-1}Y$ is $L_\infty$-strongly-reliable and the confidence interval with constant $\hat \sigma^2$ is $L_p$-strongly-reliable ($p<\infty$) will be pursued in future works.

\section{When the observations are noisy}\label{unreliableCIsec}

In this section, we consider the case that the observations are corrupted by noise. We call it \textit{stochastic case}, because multiple evaluations of the function on the same measurement location may have different observations. The observations $y_k$'s are given by \eqref{recovering}. In the stochastic case, the variance of noise $\sigma_\epsilon^2 > 0$. In this section, we still assume $f\in \mathcal{N}_\Psi(\Omega)$ in \eqref{recovering} is a fixed function. Under the misspecified assumption Assumption$^*$ \ref{assummis},
we use $\hat f_n(x)$ defined by
\begin{eqnarray}
\hat f_n(x) = r(x)^T (R+\hat \mu_n I_n)^{-1} Y,\label{prehatsto}
\end{eqnarray}
to predict $f(x)$ on a point $x\in\Omega$, where $r(x)$ and $R$ are as in \eqref{mean}, and $Y=(y_1,...,y_n)^T$. 
Through this section, we assume that the measurement locations $x_1,...,x_n$ are drawn uniformly from the input space $\Omega$, and $\hat\mu_n \asymp n^{\alpha}$ with $\alpha\in \RR$. It is obvious that $\alpha$ should be less than one in order to make meaningful predictions. In particular, if $\alpha = 0$, then $\hat \mu_n$ is at a constant rate, which is widely used in computer experiments \citep{baker2020stochastic,dancik2007mlegp}. If replicates on the same measurement location are available, then \cite{ankenman} sets $\hat \mu_n$ to be the sample variance of these replicates, which also converges to a constant as the number of replicates on each measurement location goes to infinity. It is well-known that if $\alpha = d/(2\nu+d)$, $\hat f_n$ achieves the optimal convergence rate $n^{-\frac{\nu}{2\nu+d}}$ under $L_2$ metric  \citep{stone1982optimal,geer2000empirical}. In the following theorem, we show that if $\alpha\neq d/(2\nu+d)$, the optimal convergence rate is not achieved. Recall that we use $C,C',C_1,C_2,...$ and $\eta, \eta_0,\eta_1,...$ to denote the constants, of which the values can change from line to line, and $x_k$'s are drawn uniformly from $\Omega$.
\begin{theorem}\label{NONOPTFHAT}
Suppose $\hat\mu_n \asymp n^{\alpha}$ with $\alpha<1$, and the correlation function $\Psi$ satisfies Condition \ref{C1}. Let $\hat f_n$ be given by \eqref{prehatsto}. Let $X=\{x_1,...,x_n\}$, where $x_1,...,x_n$ are uniformly distributed on $\Omega\subset\RR^d$. Under the stochastic case $(\sigma_\epsilon > 0)$, the following statements are true for all $n$.
\newline
\noindent(i) Suppose $1 > \alpha > \frac{d}{2\nu + d}$. With probability at least $1 - C_1\exp( - C_2 n^{\eta_1})$,
\begin{align}\label{mainthmlbstoineq}
\sup_{f\in \mathcal{N}_\Psi(\Omega), \|f\|_{\mathcal{N}_\Psi(\Omega)} \leq 1} \|f-\hat f_n\|_{L_{2}(\Omega)}^2 \geq C_3 n^{-(1 - 2\eta)\frac{2\nu}{2\nu + d}},
\end{align}
where $\eta = \left(\frac{(\alpha - 1)(2\nu + d)}{2\nu} + 1\right)/4 \in (0,1/4)$.
In particular, with probability at least $1 - C_4\exp( - C_5 n^{\eta_2})$,
\begin{align}\label{mainthmlbstoineqexp}
\sup_{f\in \mathcal{N}_\Psi(\Omega), \|f\|_{\mathcal{N}_\Psi(\Omega)} \leq 1}  \mathbb{E}_\epsilon\|f-\hat f_n\|_{L_{2}(\Omega)}^2 \geq C_6 n^{-(1 - 2\eta)\frac{2\nu}{2\nu + d}}.
\end{align}
(ii) Suppose $\alpha < \frac{d}{2\nu + d}$. With probability at least $1 - C_7\exp(-C_8n^{\eta_3})$,
\begin{align}\label{mainthmlbstoineqfixmu}
\sup_{f\in \mathcal{N}_\Psi(\Omega), \|f\|_{\mathcal{N}_\Psi(\Omega)} \leq 1} \mathbb{E}_\epsilon\|f-\hat f_n\|_{L_{2}(\Omega)}^2 \geq C_9 n^{\eta_4},
\end{align}
where $\eta_4 = \min\left(-\frac{1}{2}, (1 - \alpha)\frac{d}{2\nu} - 1\right) > -\frac{2\nu}{2\nu + d}$. In (i) and (ii), the constants $C_i$'s and $\eta_1,\eta_2,\eta_3$ are positive and depending on $\Psi$, $\Omega$ and $\sigma_\epsilon^2$ but not depending on $n$, and the expectation is taken with respect to $\epsilon_k$'s. The probability of \eqref{mainthmlbstoineq} is with respect to $X$ and $\epsilon_k$'s, and the probabilities of \eqref{mainthmlbstoineqexp} and \eqref{mainthmlbstoineqfixmu} are with respect to $X$.
\end{theorem}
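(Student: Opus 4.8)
The plan is to establish lower bounds on the prediction error of the regularized kriging predictor $\hat f_n$ by constructing, for each sample size, a worst-case function $f$ in the unit ball of $\mathcal{N}_\Psi(\Omega)$ for which $\|f - \hat f_n\|_{L_2(\Omega)}$ is large. The central tool is a spectral/eigenvalue analysis of the kernel matrix $R$ and the integral operator $T$ associated with $\Psi$. Because $x_1,\dots,x_n$ are uniform, I expect the empirical kernel matrix $\frac{1}{n}R$ to concentrate around $T$, so the eigenvalues $\lambda_1 \geq \lambda_2 \geq \cdots$ of $R$ behave like $n$ times the eigenvalues of $T$; by Condition \ref{C1} and the equivalence $\mathcal{N}_\Psi(\Omega) = H^\nu(\Omega)$, these decay polynomially, with the $j$-th eigenvalue of $T$ of order $j^{-2\nu/d}$. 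The regularized predictor acts on the eigen-decomposition as a filter that shrinks the component of $Y$ along the $j$-th eigenvector by the factor $\lambda_j/(\lambda_j + \hat\mu_n)$, so both the bias (signal attenuated by the regularization) and the variance (noise amplified by small eigenvalues) can be read off from this filter.

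First I would diagonalize: write the eigenvalues of $R$ as $n\hat\lambda_j$ and split the index set at the point where $n\hat\lambda_j \approx \hat\mu_n \asymp n^\alpha$, i.e. where $\hat\lambda_j \asymp n^{\alpha-1}$, which by the eigenvalue decay $\hat\lambda_j \asymp j^{-2\nu/d}$ corresponds to a cutoff index $j^* \asymp n^{(1-\alpha)d/(2\nu)}$. The bias-squared from regularization is of order $\sum_{j} \big(\frac{\hat\mu_n}{n\hat\lambda_j + \hat\mu_n}\big)^2 |c_j|^2$, where $c_j$ are the coefficients of the worst-case $f$; the noise-variance term, after taking $\mathbb{E}_\epsilon$, is of order $\sigma_\epsilon^2 \sum_j \frac{n\hat\lambda_j}{(n\hat\lambda_j + \hat\mu_n)^2}$. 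For part (i), where the regularization is too large ($\alpha > d/(2\nu+d)$), I would place the worst-case function's mass near the cutoff so that the bias dominates, and optimize the allocation of the unit $\mathcal{N}_\Psi$-budget across eigendirections to maximize the bias subject to $\sum_j |c_j|^2/\hat\lambda_j \leq 1$; the stated rate $n^{-(1-2\eta)2\nu/(2\nu+d)}$ with $\eta$ linear in $\alpha$ should fall out of balancing against $j^*$. For part (ii), where the regularization is too small ($\alpha < d/(2\nu+d)$), the noise-variance term dominates, and I would lower-bound the variance sum directly using the eigenvalue decay, obtaining a rate worse than optimal; the $\min(-1/2, (1-\alpha)d/(2\nu)-1)$ in $\eta_4$ reflects two competing regimes of how many eigenvalues exceed $\hat\mu_n$.

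The concentration of $R/n$ around $T$ is what forces the ``with probability at least $1 - C_1\exp(-C_2 n^{\eta_1})$'' qualifier, so a key step is a matrix-concentration argument — likely via matrix Bernstein or a truncation of the kernel's spectral expansion — giving exponential control on the deviation of the empirical eigenvalues from the population ones, uniformly over the relevant block of indices. I would then transfer the deterministic spectral lower bound from the population operator to the random matrix on this high-probability event. The passage between \eqref{mainthmlbstoineq} (a bound on the random quantity $\|f-\hat f_n\|_{L_2}^2$ holding with high probability) and \eqref{mainthmlbstoineqexp} (the same rate for $\mathbb{E}_\epsilon\|f-\hat f_n\|_{L_2}^2$) would use that the $\mathbb{E}_\epsilon$ isolates the variance-plus-deterministic-bias part, so the randomness reduces to that of $X$ alone, explaining why the probability statements attach to different sources of randomness in the three displays.

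The main obstacle I anticipate is the uniform spectral concentration: controlling not just the top eigenvalues but the full block of eigenvalues down to the cutoff $j^* \asymp n^{(1-\alpha)d/(2\nu)}$, since the lower bound genuinely depends on the behavior of eigenvalues at this critical scale where $n\hat\lambda_j \asymp \hat\mu_n$, and the polynomial decay makes the relevant eigenvalues small and hence sensitive to perturbation. Getting the concentration sharp enough that the perturbation does not swallow the gap between the achieved rate and the optimal rate — and doing so with sub-exponential tails to justify the $\exp(-C_2 n^{\eta_1})$ form — is where the real work lies; the rest is a careful but essentially mechanical optimization over the eigenbasis coefficients subject to the native-space norm constraint.
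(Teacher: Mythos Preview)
Your plan for part~(ii) is essentially the paper's: decompose $\mathbb{E}_\epsilon\|f-\hat f_n\|_{L_2}^2$ into bias plus variance, keep only the variance term $\sigma_\epsilon^2\int_\Omega r(x)^T(R+\hat\mu_n I)^{-2}r(x)\,dx$, pass to the discrete trace $\operatorname{tr}\bigl(R^2(R+\hat\mu_n I)^{-2}\bigr)$, and lower-bound it using the eigenvalue decay $\lambda_k\asymp k^{-2\nu/d}$ together with a concentration bound on the minimum eigenvalue of the empirical Gram matrix built from the first $p_1$ eigenfunctions. The paper uses a ratio-type empirical-process lemma rather than matrix Bernstein, but the architecture matches yours.

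For part~(i) your diagonalization picture is only heuristic, and this is where the gap is. The filter $\lambda_j/(\lambda_j+\hat\mu_n)$ describes how $(R+\hat\mu_n I)^{-1}R$ acts on $Y$ \emph{at the design points}; it does not describe the map $f\mapsto\hat f_n$ in $L_2(\Omega)$, because $x\mapsto r(x)^T(R+\hat\mu_n I)^{-1}$ does not diagonalize in the eigenbasis of the integral operator on $\Omega$. Turning your bias expression $\sum_j\bigl(\hat\mu_n/(n\hat\lambda_j+\hat\mu_n)\bigr)^2|c_j|^2$ into a rigorous lower bound on $\|f-\hat f_n\|_{L_2(\Omega)}$ would require concentration of empirical eigen\emph{functions}, not just eigenvalues, down to the cutoff, and control of how the kernel extension $x\mapsto r(x)^Tu$ distorts that basis. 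That is substantially harder than what you flagged as the main obstacle.

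The paper sidesteps the eigenbasis of $\Omega$ entirely. It starts from the variational characterization of $\hat f_n$, uses empirical-process bounds (a weighted maximal inequality for $\langle\epsilon,g\rangle_n$ and a uniform $\|\cdot\|_n\leftrightarrow\|\cdot\|_{L_2}$ comparison) to replace the random empirical objective by a deterministic $L_2$-penalized problem on $\Omega$, then extends to $\mathbb{R}^d$ where the penalized problem \emph{does} diagonalize exactly under the Fourier transform. The worst-case $f$ is constructed by prescribing $\mathcal{F}(f)$ on the frequency shell where $\frac{\hat\mu_n}{n}(1+|\omega|^2)^\nu$ is of order one---your ``place mass near the cutoff'' idea, but in Fourier space rather than in the unknown eigenbasis of $T$ on $\Omega$. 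The chain of comparisons (random $\hat f_n\to$ deterministic $f_1$ on $\Omega\to$ deterministic $\tilde f$ on $\mathbb{R}^d$) is where the work is, and it is not captured by matrix concentration alone.
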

Theorem \ref{NONOPTFHAT} provides non-asymptotic lower bounds on the mean squared prediction error under different choices of the regularization parameter value. In particular, it shows that if $\alpha \neq d/(2\nu + d)$, the optimal convergence rate cannot be achieved with high probability, as summarized in the following corollary.
\begin{corollary}\label{CORONONOPTFHAT}
Suppose $\hat\mu_n \asymp n^{\alpha}$, the correlation function $\Psi$ satisfies Condition \ref{C1}, and $\alpha\in (-\infty, d/(2\nu + d)) \cup (d/(2\nu + d),1)$. Let $\hat f_n$ be given by \eqref{prehatsto}. Under the stochastic case $(\sigma_\epsilon > 0)$, we have that
\begin{align*}
\sup_{f\in \mathcal{N}_\Psi(\Omega), \|f\|_{\mathcal{N}_\Psi(\Omega)} \leq 1}  \mathbb{E}\left(\|f-\hat f_n\|_{L_{2}(\Omega)}^2\right) \geq C n^{\eta}
\end{align*}
holds for all $n$, where $\eta > -\frac{2\nu}{2\nu + d}$ and $C>0$ are constants depending on $\Psi$, $\Omega$, and $\sigma^2_\epsilon$.
\end{corollary}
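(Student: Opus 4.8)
The plan is to read the corollary off Theorem \ref{NONOPTFHAT} by upgrading its high-probability-in-$X$ lower bounds on the conditional error $\mathbb{E}_\epsilon\|f-\hat f_n\|_{L_2(\Omega)}^2$ to a lower bound on the full expectation $\mathbb{E}\|f-\hat f_n\|_{L_2(\Omega)}^2=\mathbb{E}_X\mathbb{E}_\epsilon\|f-\hat f_n\|_{L_2(\Omega)}^2$. Since the admissible range of $\alpha$ is exactly $(-\infty,d/(2\nu+d))\cup(d/(2\nu+d),1)$, I would treat the two regimes separately, using part (i) of Theorem \ref{NONOPTFHAT} when $\alpha>d/(2\nu+d)$ and part (ii) when $\alpha<d/(2\nu+d)$, and verify in each case that the resulting exponent strictly exceeds $-\frac{2\nu}{2\nu+d}$.

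For $\alpha\in(d/(2\nu+d),1)$ I invoke \eqref{mainthmlbstoineqexp}. Let $f^\ast_n$ be the norm-bounded extremal function furnished by the proof of Theorem \ref{NONOPTFHAT}(i); it depends on $n$ but not on the design or the noise. Then there is an event $\mathcal{A}_n$ in the $X$-probability space with $\mathbb{P}(\mathcal{A}_n)\geq 1-C_4\exp(-C_5 n^{\eta_2})$ on which $\mathbb{E}_\epsilon\|f^\ast_n-\hat f_n\|_{L_2(\Omega)}^2\geq C_6 n^{-(1-2\eta)\frac{2\nu}{2\nu+d}}$, where $\eta=\big(\frac{(\alpha-1)(2\nu+d)}{2\nu}+1\big)/4\in(0,1/4)$. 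Because the integrand is nonnegative, restricting the outer expectation to $\mathcal{A}_n$ gives
\begin{align*}
\sup_{\|f\|_{\mathcal{N}_\Psi(\Omega)}\leq 1}\mathbb{E}\|f-\hat f_n\|_{L_2(\Omega)}^2
&\geq \mathbb{E}\big[\mathbb{E}_\epsilon\|f^\ast_n-\hat f_n\|_{L_2(\Omega)}^2\,\mathbf{1}_{\mathcal{A}_n}\big] \\
&\geq C_6 n^{-(1-2\eta)\frac{2\nu}{2\nu+d}}\big(1-C_4 e^{-C_5 n^{\eta_2}}\big).
\end{align*}
Since $\eta>0$ forces $1-2\eta<1$, the exponent $-(1-2\eta)\frac{2\nu}{2\nu+d}$ is strictly larger than $-\frac{2\nu}{2\nu+d}$, and it plays the role of the corollary's $\eta$ in this regime.

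For $\alpha<d/(2\nu+d)$ the argument is identical but built on \eqref{mainthmlbstoineqfixmu}, which yields the exponent $\eta_4=\min\big(-\frac{1}{2},(1-\alpha)\frac{d}{2\nu}-1\big)$ after the same truncation to the high-probability $X$-event. The inequality $\eta_4>-\frac{2\nu}{2\nu+d}$ is checked termwise: $-\frac{1}{2}>-\frac{2\nu}{2\nu+d}$ is equivalent to $\nu>d/2$, which is guaranteed by Condition \ref{C1}; and $(1-\alpha)\frac{d}{2\nu}-1$ equals $-\frac{2\nu}{2\nu+d}$ at $\alpha=d/(2\nu+d)$ and increases as $\alpha$ decreases, hence is strictly larger throughout this regime.

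Two loose ends remain. To pass from large $n$ to all $n$, note that the factor $1-C_4 e^{-C_5 n^{\eta_2}}$ (and its analogue in part (ii)) tends to $1$, so it exceeds $1/2$ for $n\geq N_0$; absorbing it into the constant settles $n\geq N_0$, while for the finitely many $n<N_0$ the bias--variance decomposition gives $\mathbb{E}\|f-\hat f_n\|_{L_2(\Omega)}^2\geq \mathbb{E}_X\int_\Omega \mathrm{Var}_\epsilon(\hat f_n(x))\,dx>0$ (strict because $\sigma_\epsilon>0$), a positive quantity independent of $f$, so shrinking $C$ covers these cases. I expect the only genuinely delicate point to be the first step: moving $\sup_{\|f\|_{\mathcal{N}_\Psi(\Omega)}\leq 1}$ outside the expectation requires the extremal function of Theorem \ref{NONOPTFHAT} to be chosen independently of $X$, so that a single deterministic $f^\ast_n$ serves the full-expectation bound; if the theorem's proof only supplied an $X$-dependent maximizer I would have to revisit that construction to extract such an $f^\ast_n$, which the underlying concentration argument does provide.
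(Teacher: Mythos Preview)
Your derivation is correct and is precisely the intended route: the paper states Corollary \ref{CORONONOPTFHAT} as an immediate consequence of Theorem \ref{NONOPTFHAT} without a separate proof, and the natural argument is exactly the one you give---restrict the outer expectation in $X$ to the high-probability event from \eqref{mainthmlbstoineqexp} (respectively \eqref{mainthmlbstoineqfixmu}), use nonnegativity to drop the complement, and check that the resulting exponents strictly exceed $-\tfrac{2\nu}{2\nu+d}$. Your verification of the exponents and your small-$n$ patch via the variance term are both fine, and your concern about the $X$-independence of the extremal function is well placed and correctly resolved: in the proof of Theorem \ref{NONOPTFHAT}(i) the worst-case $f$ is built deterministically through its Fourier transform (depending only on $\hat\mu_n/n$), while in part (ii) the lower bound comes from the variance term and holds for every $f$ in the unit ball.
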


Corollary \ref{CORONONOPTFHAT} states that with the uniformly distributed measurement locations, the value of $\alpha$ other than $d/(2\nu + d)$ cannot lead to the optimal predictor. This is intuitively true because the regularization parameter $\hat \mu_n$ determines the trade-off between the bias and variance. It is well-known in the literature that by choosing $\alpha = d/(2\nu + d)$, we can achieve the best trade-off between the bias and variance. If $\alpha > d/(2\nu + d)$, the bias becomes large, and the variance is small. On the other hand, if $\alpha < d/(2\nu + d)$, the variance is large, and the bias is small. In both cases, we cannot achieve the best trade-off between the bias and variance, and have a slower convergence rate of the prediction error. To the best of our knowledge, it has not been presented in the literature that by choosing the regularization parameter value other than the rate $n^{\frac{d}{2\nu+d}}$, the optimal rate cannot be achieved. 

Next, we consider the uncertainty quantification for $\hat f_n$. Recall that for $\beta\in(0,1)$, at point $x\in \Omega$, the confidence interval constructed by Gaussian process modeling is given by 
\begin{align*}
\widehat{CI}_{n,\beta}(x) = [\hat f_n(x) - \tilde c_{n,\beta}(x;\hat \mu_n), \hat f_n(x) + \tilde c_{n,\beta}(x;\hat \mu_n)],
\end{align*}
where
\begin{align}
\tilde c_{n,\beta}(x ;\hat \mu_n) = & q_{1-\beta/2}  \sqrt{\hat \sigma^2(1 - r(x)^T (R+\hat \mu_n I_n)^{-1} r(x))},\label{cnxbetainCIestilde}\\
\text{ and }\qquad \hat{\sigma}^2= & Y^T (R+\hat \mu_n I_n)^{-1}Y/n.\nonumber
\end{align}
The following proposition states that $|\widehat{CI}_{n,\beta}(x)| = 2\tilde c_{n,\beta}(x;\hat \mu_n)$ converges to zero with $0\leq \alpha<1$.
\begin{prop}\label{propstocib}
Under the conditions of Theorem \ref{NONOPTFHAT} and $0\leq \alpha < 1$, it holds that with probability at least $1-C_1\exp(-C_2n^\eta)$,
$c_{n,\beta}(x ;\hat \mu_n)^2\leq C_3n^{-\frac{2\nu+(\alpha-1)d}{2\nu}}$ for all $n$, where $C_1,C_2,C_3$ and $\eta$ are positive constants depending on $\Psi,\Omega$, and $\sigma_\epsilon^2$.
\end{prop}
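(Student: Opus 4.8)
The plan is to control the two factors in
\begin{align*}
\tilde c_{n,\beta}(x;\hat\mu_n)^2 = q_{1-\beta/2}^2\,\hat\sigma^2\left(1 - r(x)^T(R+\hat\mu_n I_n)^{-1}r(x)\right)
\end{align*}
(the half-width written $c_{n,\beta}(x;\hat\mu_n)$ in the statement, as in \eqref{cnxbetainCIestilde}) separately and then multiply. Writing the target exponent as $-\frac{2\nu+(\alpha-1)d}{2\nu} = -\alpha - \frac{(1-\alpha)(2\nu-d)}{2\nu}$ already reveals the split I am after: I will show $\hat\sigma^2\lesssim n^{-\alpha}$ and that the regularized posterior-variance factor $v_n(x):=1 - r(x)^T(R+\hat\mu_n I_n)^{-1}r(x)$ satisfies $\sup_{x\in\Omega}v_n(x)\lesssim n^{-(1-\alpha)(2\nu-d)/(2\nu)}$, both on an event of probability at least $1-C_1\exp(-C_2 n^\eta)$. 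In contrast to the noiseless case, neither factor is $\Theta(1)$ here, so both estimates are genuinely needed.

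For $\hat\sigma^2 = Y^T(R+\hat\mu_n I_n)^{-1}Y/n$ I would split $Y = F + E$ with $F = (f(x_1),\dots,f(x_n))^T$ and $E = (\epsilon_1,\dots,\epsilon_n)^T$, and use $Y^TMY\le 2F^TMF + 2E^TME$ for $M = (R+\hat\mu_n I_n)^{-1}\preceq R^{-1}$. The signal term is benign: $F^TMF\le F^TR^{-1}F = \|\mathcal{I}_{\Psi,X}f\|_{\mathcal{N}_\Psi(\Omega)}^2 \le \|f\|_{\mathcal{N}_\Psi(\Omega)}^2 = O(1)$ by \eqref{anindentity} (since $f$ is fixed, the constant may depend on $f$). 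For the noise term I use $M\preceq \hat\mu_n^{-1}I_n$ together with a $\chi^2$ tail bound to get $E^TME\le \hat\mu_n^{-1}\|E\|_2^2 \lesssim \hat\mu_n^{-1} n \asymp n^{1-\alpha}$ on an event of probability $1-C\exp(-cn)$. Dividing by $n$ gives $\hat\sigma^2\lesssim n^{-1} + n^{-\alpha}\asymp n^{-\alpha}$ since $\alpha\in[0,1)$.

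The variance factor is the crux. I would use the feature-space identity $v_n(x) = \hat\mu_n\,\phi(x)^T(\Phi^T\Phi + \hat\mu_n I)^{-1}\phi(x)$, where $\phi$ is the Mercer feature map of $\Psi$ on $\Omega$ (so $\Psi(x,y)=\langle\phi(x),\phi(y)\rangle$, $\phi(x)=(\sqrt{\mu_j}\,e_j(x))_j$ with $\mu_j\asymp j^{-2\nu/d}$ the integral-operator eigenvalues and $(e_j)$ the $L_2(\Omega)$-orthonormal eigenfunctions), and $\Phi^T\Phi = \sum_{k=1}^n \phi(x_k)\phi(x_k)^T$ is the empirical covariance operator; the identity follows from $R=\Phi\Phi^T$, $r(x)=\Phi\phi(x)$ and the push-through identity. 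Since the $x_k$ are i.i.d.\ uniform, $\mathbb{E}_X[\Phi^T\Phi]$ is a constant multiple of $n\,\mathrm{diag}(\mu_j)$, and an operator-concentration argument gives $\Phi^T\Phi\asymp n\,\mathrm{diag}(\mu_j)$ (two-sided spectral bounds) on an event of probability $1-C_1\exp(-C_2n^\eta)$. On this event, using boundedness of the eigenfunctions,
\begin{align*}
\sup_{x\in\Omega} v_n(x)\lesssim \hat\mu_n\sum_{j\ge 1}\frac{\mu_j}{n\mu_j + \hat\mu_n}\asymp \hat\mu_n\, n^{-1+(1-\alpha)d/(2\nu)} = n^{-(1-\alpha)(2\nu-d)/(2\nu)},
\end{align*}
where the middle estimate follows by splitting the sum at the effective index $j^\ast\asymp n^{(1-\alpha)d/(2\nu)}$ (equivalently, at the effective resolution $h_{\mathrm{eff}}\asymp n^{-(1-\alpha)/(2\nu)}$) where $n\mu_j\asymp\hat\mu_n$, and summing the two comparable pieces. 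As a consistency check, the noiseless power-function bound of Lemma \ref{Th:Matern}, $\mathcal{P}_{\Psi,X}^2\lesssim h_{X,\Omega}^{2\nu-d}\asymp n^{-(2\nu-d)/d}$, is of strictly smaller order and hence does not govern $v_n$ once $\hat\mu_n>0$. The main obstacle is exactly this step: making the replacement $\Phi^T\Phi\approx n\,\mathrm{diag}(\mu_j)$ rigorous uniformly over $x$ with the stated sub-exponential probability, i.e.\ controlling the empirical integral operator and the tail of the eigenfunction expansion; this is where the uniform-design concentration machinery underlying Theorem \ref{NONOPTFHAT} is reused.

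Combining the two bounds on the intersection of the events (whose complement has probability at most $C_1\exp(-C_2 n^\eta)$ after relabeling constants, as $\eta<1$ makes the operator term dominate the $\exp(-cn)$ noise term), I obtain
\begin{align*}
\tilde c_{n,\beta}(x;\hat\mu_n)^2 \le q_{1-\beta/2}^2\,\hat\sigma^2\,\sup_{x\in\Omega}v_n(x)\lesssim n^{-\alpha}\cdot n^{-(1-\alpha)(2\nu-d)/(2\nu)} = n^{-\frac{2\nu+(\alpha-1)d}{2\nu}},
\end{align*}
which is the asserted bound, uniformly in $x\in\Omega$.
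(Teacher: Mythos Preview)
Your overall split is exactly the one the paper uses: the statement is declared ``a direct result of Lemmas \ref{lemmaofnonpara} and \ref{lemmaerrorwnug}'', i.e.\ $\hat\sigma^2\asymp n^{-\alpha}$ from Lemma \ref{lemmaofnonpara} and $\sup_x v_n(x)\lesssim n^{(\alpha-1)(1-d/(2\nu))}$ from Lemma \ref{lemmaerrorwnug}, then multiply. Your treatment of $\hat\sigma^2$ is actually more elementary than the paper's: Lemma \ref{lemmaofnonpara} goes through the penalized-regression characterization \eqref{transOf2term}--\eqref{optGeGoal} and Lemma \ref{lemmavandegeerf} to obtain a two-sided bound $\hat\mu_n\hat\sigma^2\asymp\sigma_\epsilon^2$, whereas your $Y=F+E$ split with $M\preceq R^{-1}$ and $M\preceq\hat\mu_n^{-1}I_n$ gives the needed upper bound directly and with the right probability.

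For $v_n(x)$ you take a genuinely different route from the paper. The paper's Lemma \ref{lemmaerrorwnug} never passes through the Mercer expansion; it instead sets $g(t)=\Psi(x-t)$, notes $v_n(x)\le\|g-f_1\|_{L_\infty(\Omega)}$ with $f_1$ the regularized interpolant of $g$, and combines the interpolation inequality with the empirical/$L_2$ norm comparison of Lemma \ref{lemmaratio1} to bootstrap \eqref{corub1}--\eqref{coroub51} into the stated rate. This argument needs only Condition \ref{C1} and the uniform-design concentration already developed for Theorem \ref{NONOPTFHAT}. Your feature-space identity $v_n(x)=\hat\mu_n\,\phi(x)^T(\Phi^T\Phi+\hat\mu_n I)^{-1}\phi(x)$ and the heuristic $\Phi^T\Phi\approx n\,\mathrm{diag}(\mu_j)$ yield the correct rate after the split at $j^\ast\asymp n^{(1-\alpha)d/(2\nu)}$, but the passage to $\sup_x v_n(x)\lesssim\hat\mu_n\sum_j\mu_j/(n\mu_j+\hat\mu_n)$ relies on two things the paper does not assume or prove: (i) a two-sided spectral comparison for the \emph{full} empirical covariance operator (Lemma \ref{boundsofdeterminant} only controls the truncated block $\Psi_1^T\Psi_1$ with $p_1\lesssim n^{1/2}$), and (ii) uniform boundedness of the eigenfunctions $e_j$, which is not a consequence of Condition \ref{C1} on a general convex compact $\Omega$. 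The paper's route buys you freedom from (i) and (ii) at the cost of a slightly less transparent argument; your route is more intuitive but would need these extra ingredients justified to be complete.
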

Proposition \ref{propstocib} is a direct result of Lemmas \ref{lemmaofnonpara} and \ref{lemmaerrorwnug} in Appendix \ref{secpfoftheomnonoptfhat}, and implies $\lim_{n\rightarrow\infty}\sup_{x\in \Omega}|\widehat{CI}_{n,\beta}(x)|=0$ if $0 \leq \alpha <1$. Intuitively, when $\alpha$ is large, the bias is large. Therefore, the confidence interval, which is a reflection of the variance, is not wide enough to capture the bias. As a consequence, the confidence interval $\widehat{CI}_{n,\beta}$ is not reliable. On the other hand, a smaller regularization parameter value lets the variance dominate. Therefore, the variance dominates the confidence interval; thus the confidence interval is reliable. The results related to the reliability of the confidence interval $\widehat{CI}_{n,\beta}$ are presented in the following theorem. In Theorem \ref{COROCISTO}, recall that $x_1,...,x_n$ are drawn uniformly from $\Omega$, and $|\widehat{CI}_{n,\beta}(x)|=2\tilde c_{n,\beta}(x;\hat \mu_n)$.
\begin{theorem}\label{COROCISTO}
Suppose $\hat\mu_n \asymp n^{\alpha}$ with $\alpha<1$, and the correlation function $\Psi$ satisfies Condition \ref{C1}. Fix $\beta \in (0,1)$. Under the stochastic case $(\sigma_\epsilon > 0)$, the following statements are true for all $n$.

\noindent(i) Suppose $\alpha > \frac{d}{2\nu + d}$. With probability at least $1 - C_1\exp( - C_2 n^{\eta_1})$,
\begin{align}\label{mainpropstoeqexp}
\sup_{f\in \mathcal{N}_\Psi(\Omega), \|f\|_{\mathcal{N}_\Psi(\Omega)} \leq 1}  \mathbb{E}_\epsilon\|(f-\hat f_n)/\tilde c_{n,\beta}(\cdot;\hat \mu_n)\|_{L_{2}(\Omega)}^2 \geq C n^{\eta_2}.
\end{align}

\noindent(ii) Suppose $0\leq \alpha < \frac{d}{2\nu + d}$. With probability at least $1 - C_3\exp( - C_4 n^{\eta_3})$,
\begin{align}\label{mainpropstoeqexp2}
\sup_{f\in \mathcal{N}_\Psi(\Omega), \|f\|_{\mathcal{N}_\Psi(\Omega)} \leq \sqrt{\log n}}  \mathbb{E}_\epsilon\|f-\hat f_n\|_{L_{2}(\Omega)}^2 \leq C' \mathbb{E}_\epsilon\|\tilde c_{n,\beta}(\cdot;\hat \mu_n)\|_{L_{2}(\Omega)}^2.
\end{align}
Suppose $\alpha = \frac{d}{2\nu + d}$. Then for any increasing positive sequence $\{a_n\}_{n\geq 0}$ satisfying $\lim_{n \rightarrow \infty} a_n = \infty$,
\begin{align}\label{mainpropstoeqexp23}
\sup_{f\in \mathcal{N}_\Psi(\Omega), \|f\|_{\mathcal{N}_\Psi(\Omega)}^2 \leq a_n}  \mathbb{E}_\epsilon\|(f-\hat f_n)/\tilde c_{n,\beta}(\cdot;\hat \mu_n)\|_{L_{2}(\Omega)}^2 \geq C'' a_n.
\end{align}

\noindent(iii) Suppose $\alpha < 0$. With probability at least $1 - C_5\exp(-C_6 n^{\eta_5})$,
\begin{align}\label{mainpropstoeqexpsmall}
\sup_{f\in \mathcal{N}_\Psi(\Omega), \|f\|_{\mathcal{N}_\Psi(\Omega)} \leq \log n}  \mathbb{E}_\epsilon\|(f-\hat f_n)/\tilde c_{n,\beta}(\cdot;\hat \mu_n)\|_{L_{2}(\Omega)}^2 \leq C'''.
\end{align}
In \eqref{mainpropstoeqexp}-\eqref{mainpropstoeqexpsmall}, $\hat f_n$ is as in 
\eqref{prehatsto} and $\tilde c_{n,\beta}(\cdot;\hat \mu_n)$ is as in \eqref{cnxbetainCIestilde}. In all statements, the expectation is taken with respect to $\epsilon = (\epsilon_1,...,\epsilon_n)^T$, and the constants $C',C'',C'''$, $C_i$'s and $\eta_j$'s are positive, and depend on $\Psi$, $\Omega$, $\beta$ and $\sigma_\epsilon^2$ but not depending on $n$. The probabilities are with respect to $X$.
\end{theorem}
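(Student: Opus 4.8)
The plan is to analyze the ratio $\|(f-\hat f_n)/\tilde c_{n,\beta}(\cdot;\hat\mu_n)\|_{L_2(\Omega)}^2$ by controlling its numerator (the $L_2$ prediction error) and its denominator (the squared half-width $\tilde c_{n,\beta}^2 = q_{1-\beta/2}^2\,\hat\sigma^2(1-r(\cdot)^T(R+\hat\mu_n I_n)^{-1}r(\cdot))$) separately, so that each reliability claim reduces to a comparison of rates. First I would fix the bias--variance decomposition $\hat f_n = r(\cdot)^T(R+\hat\mu_n I_n)^{-1}F + r(\cdot)^T(R+\hat\mu_n I_n)^{-1}\epsilon$ with $F=(f(x_1),\dots,f(x_n))^T$, giving $\mathbb{E}_\epsilon\|f-\hat f_n\|_{L_2}^2 = \underbrace{\|f - r(\cdot)^T(R+\hat\mu_n I_n)^{-1}F\|_{L_2}^2}_{\text{bias}} + \underbrace{\mathbb{E}_\epsilon\|r(\cdot)^T(R+\hat\mu_n I_n)^{-1}\epsilon\|_{L_2}^2}_{\text{variance}}$. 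The bias scales with $\|f\|_{\mathcal{N}_\Psi(\Omega)}^2$ while the variance is $f$-free; this dichotomy is exactly what lets the ball radius $\sqrt{\log n}$ (resp. $\log n$) grow in the reliable regimes. The same resolvent governs the width, and $\hat\sigma^2 = Y^T(R+\hat\mu_n I_n)^{-1}Y/n$ has $\epsilon$-mean $F^T(R+\hat\mu_n I_n)^{-1}F/n + \sigma_\epsilon^2\operatorname{tr}((R+\hat\mu_n I_n)^{-1})/n$.

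The crux is the spectral control of $R+\hat\mu_n I_n$ afforded by Condition \ref{C1} together with uniform sampling: the regularized power function $1-r(\cdot)^T(R+\hat\mu_n I_n)^{-1}r(\cdot)$ and the trace $\operatorname{tr}((R+\hat\mu_n I_n)^{-1})$ can be matched two-sidedly to integral-operator eigenvalues behaving like $k^{-2\nu/d}$. These are precisely the estimates supplied by Lemmas \ref{lemmaofnonpara} and \ref{lemmaerrorwnug}, holding on an event of probability $1-C_1\exp(-C_2 n^{\eta})$ over the design $X$ via matrix concentration. From them I would read off that the variance term and the noise part of $\hat\sigma^2$ are both governed by $\operatorname{tr}((R+\hat\mu_n I_n)^{-1})$, and that $\tilde c_{n,\beta}^2$ obeys the upper bound of Proposition \ref{propstocib}, namely $\tilde c_{n,\beta}^2\lesssim n^{-1+(1-\alpha)d/(2\nu)}$, with a matching lower bound once $\alpha<d/(2\nu+d)$ forces the noise part of $\hat\sigma^2$ to dominate.

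The three regimes then follow by rate comparison. For (i) $\alpha>d/(2\nu+d)$, combine the prediction-error lower bound of Theorem \ref{NONOPTFHAT}(i) with the CI-width upper bound of Proposition \ref{propstocib}; since $\tilde c_{n,\beta}$ is uniformly bounded above, $\|(f-\hat f_n)/\tilde c_{n,\beta}\|_{L_2}^2\ge \|f-\hat f_n\|_{L_2}^2/\sup_x\tilde c_{n,\beta}^2$, and the exponent arithmetic $-(1-2\eta)\frac{2\nu}{2\nu+d}-\big(\frac{(1-\alpha)d}{2\nu}-1\big)$ is an increasing function of $\alpha$ vanishing at $\alpha=d/(2\nu+d)$, hence strictly positive here, yielding \eqref{mainpropstoeqexp} with $\eta_2>0$. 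For the boundary $\alpha=d/(2\nu+d)$ I would isolate the $\|f\|^2$-proportional bias term and show it contributes $\gtrsim a_n\,\tilde c_{n,\beta}^2$ once $\|f\|^2\asymp a_n\to\infty$, giving \eqref{mainpropstoeqexp23}. For the reliable regimes (ii) with $0\le\alpha<d/(2\nu+d)$ and (iii) with $\alpha<0$, I would establish the matching upper bound on the prediction error and the lower bound $\hat\sigma^2\gtrsim \sigma_\epsilon^2\operatorname{tr}((R+\hat\mu_n I_n)^{-1})/n$, so that the variance-dominated width is at least a constant multiple of the full prediction error on the ball of radius $\sqrt{\log n}$ (resp. $\log n$), giving \eqref{mainpropstoeqexp2} and \eqref{mainpropstoeqexpsmall}.

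The hard part will be the two-sided, high-probability control of $\hat\sigma^2$ in the reliable regimes. Because $\hat\sigma^2$ is a quadratic form in the noise with a random, design-dependent resolvent as its matrix, and because it sits in the \emph{denominator} of the ratio, I must produce a lower bound that survives $\mathbb{E}_\epsilon$ of a reciprocal. This calls for a Hanson--Wright/Bernstein concentration of $\epsilon^T(R+\hat\mu_n I_n)^{-1}\epsilon$ about $\sigma_\epsilon^2\operatorname{tr}((R+\hat\mu_n I_n)^{-1})$, combined with the eigenvalue lower bounds, and a verification that the growing ball radius does not let the signal term $F^T(R+\hat\mu_n I_n)^{-1}F$ either spoil the denominator's lower bound or inflate the bias beyond the variance. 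Managing the simultaneous randomness in numerator and denominator, rather than the rate bookkeeping, is where the genuine work lies.
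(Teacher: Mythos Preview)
Your proposal is correct and follows essentially the same route as the paper: the bias--variance decomposition of $\mathbb{E}_\epsilon\|f-\hat f_n\|_{L_2}^2$, the upper bound on $1-r(\cdot)^T(R+\hat\mu_n I_n)^{-1}r(\cdot)$ from Lemma~\ref{lemmaerrorwnug}, the prediction-error lower bound from Theorem~\ref{NONOPTFHAT} for case~(i), and the rate comparison in each regime are exactly what the paper does. The one point where you over-engineer is the control of $\hat\sigma^2$: Lemma~\ref{lemmaofnonpara} already delivers the two-sided bound $\hat\mu_n\hat\sigma^2\asymp\sigma_\epsilon^2$ with the needed exponential probability (via the representer-theorem identity \eqref{transOf2term} and empirical-process arguments), so no separate Hanson--Wright step is required; once $\hat\sigma^2$ is pinned to $\sigma_\epsilon^2/\hat\mu_n$ on the good event, the denominator is effectively deterministic in $\epsilon$ and the reciprocal poses no difficulty.
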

As direct results of Theorem \ref{COROCISTO}, we have the following corollary, which states the results related to the $L_2$-reliability of the confidence interval $\widehat{CI}_{n,\beta}(x)$.
\begin{corollary}\label{COROCOROCISTO}
Suppose $\hat\mu_n \asymp n^{\alpha}$ with $\alpha<1$, and the correlation function $\Psi$ satisfies Condition \ref{C1}. Fix $\beta \in (0,1)$. Under the stochastic case $(\sigma_\epsilon > 0)$, the following statements are true for all $n$.

\noindent(i) Suppose $\alpha > \frac{d}{2\nu + d}$. We have
\begin{align*}
\sup_{f\in \mathcal{N}_\Psi(\Omega), \|f\|_{\mathcal{N}_\Psi(\Omega)} \leq 1}  \mathbb{E} \left(\|(f-\hat f_n)/\tilde c_{n,\beta}(\cdot;\hat \mu_n)\|_{L_{2}(\Omega)}^2\right) \geq C n^{\eta_1}.
\end{align*}

\noindent(ii) Suppose $0\leq \alpha < \frac{d}{2\nu + d}$.  We have
\begin{align*}
\sup_{f\in \mathcal{N}_\Psi(\Omega), \|f\|_{\mathcal{N}_\Psi(\Omega)} \leq \sqrt{\log n}}  \mathbb{E}\|f-\hat f_n\|_{L_{2}(\Omega)}^2 \leq C' \mathbb{E}\|\tilde c_{n,\beta}(\cdot;\hat \mu_n)\|_{L_{2}(\Omega)}^2.
\end{align*}
Suppose $\alpha = \frac{d}{2\nu + d}$. Then for any increasing positive sequence $\{a_n\}_{n\geq 0}$ satisfying $\lim_{n \rightarrow \infty} a_n = \infty$,
\begin{align*}
\sup_{f\in \mathcal{N}_\Psi(\Omega), \|f\|_{\mathcal{N}_\Psi(\Omega)}^2 \leq a_n}  \mathbb{E} \left(\|(f-\hat f_n)/\tilde c_{n,\beta}(\cdot;\hat \mu_n)\|_{L_{2}(\Omega)}^2\right) \geq C'' a_n.
\end{align*}

\noindent(iii) Suppose $\alpha < 0$. We have
\begin{align*}
\sup_{f\in \mathcal{N}_\Psi(\Omega), \|f\|_{\mathcal{N}_\Psi(\Omega)} \leq \log n}  \mathbb{E} \left(\|(f-\hat f_n)/\tilde c_{n,\beta}(\cdot;\hat \mu_n)\|_{L_{2}(\Omega)}^2\right) \leq C'''.
\end{align*}
In all statements, the constants $C',C'',C'''$, $C_i$'s and $\eta_1$ are positive, and only depend on $\Psi$, $\Omega$, $\beta$ and $\sigma_\epsilon^2$ but not depending on $n$.
\end{corollary}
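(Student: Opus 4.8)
The plan is to obtain Corollary \ref{COROCOROCISTO} from Theorem \ref{COROCISTO} purely by integrating out the design $X$, that is, by upgrading each ``with high probability over $X$'' statement about the $\epsilon$-conditional expectation $\mathbb{E}_\epsilon$ into a statement about the full expectation $\mathbb{E}=\mathbb{E}_X\mathbb{E}_\epsilon$. For each part of Theorem \ref{COROCISTO} let $A_n$ denote the corresponding high-probability event in $X$, so that $\mathbb{P}(A_n^c)\leq C\exp(-cn^{\eta})$ for suitable constants, and write $Q_f:=\|(f-\hat f_n)/\tilde c_{n,\beta}(\cdot;\hat\mu_n)\|_{L_2(\Omega)}^2\geq 0$. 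The content is carried entirely by the theorem; what remains is the bookkeeping of the event $A_n$ together with a crude deterministic control on its complement $A_n^c$.

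For the lower bounds (part (i) and the $\alpha=d/(2\nu+d)$ case of part (ii)) I would first record that the extremal function witnessing the lower bound in the proof of Theorem \ref{COROCISTO} can be taken to be a fixed $f_n^\ast\in\mathcal{N}_\Psi(\Omega)$ in the relevant ball that does not depend on $X$; this is the one point that must be read off from the construction, since otherwise the supremum over $f$ cannot be pulled outside the expectation over $X$ (one has $\sup_f\mathbb{E}_X\mathbb{E}_\epsilon[Q_f]\leq\mathbb{E}_X\sup_f\mathbb{E}_\epsilon[Q_f]$, the wrong direction). Granting this, nonnegativity of $Q_{f_n^\ast}$ gives
\begin{align*}
\mathbb{E}\,[Q_{f_n^\ast}]=\mathbb{E}_X\big[\mathbb{E}_\epsilon[Q_{f_n^\ast}\mid X]\big]\geq \mathbb{E}_X\big[\mathbb{E}_\epsilon[Q_{f_n^\ast}\mid X]\,\mathbf{1}_{A_n}\big]\geq C n^{\eta_2}\,\mathbb{P}(A_n)\geq \tfrac{C}{2}n^{\eta_2}
\end{align*}
for all $n$ large, and taking $\sup$ over $\{\|f\|_{\mathcal{N}_\Psi(\Omega)}\leq 1\}$ (respectively over $\{\|f\|_{\mathcal{N}_\Psi(\Omega)}^2\leq a_n\}$) yields the claim, with the finitely many small $n$ absorbed into $C$. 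The exponent in the corollary is then the theorem's $\eta_2$ renamed.

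For the upper bounds (the comparison display in part (ii) and part (iii)) I would split $\mathbb{E}[Q_f]=\mathbb{E}_X[\mathbb{E}_\epsilon[Q_f\mid X]\mathbf{1}_{A_n}]+\mathbb{E}_X[\mathbb{E}_\epsilon[Q_f\mid X]\mathbf{1}_{A_n^c}]$ (and analogously for $\|f-\hat f_n\|_{L_2(\Omega)}^2$ against $\|\tilde c_{n,\beta}(\cdot;\hat\mu_n)\|_{L_2(\Omega)}^2$). On $A_n$ the theorem's conditional bound applies uniformly over $f$ in the stated ball, so the first term is controlled by $C'''$ (respectively by $C'\mathbb{E}\|\tilde c_{n,\beta}(\cdot;\hat\mu_n)\|_{L_2(\Omega)}^2$, after discarding the indicator from the nonnegative right-hand side). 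The second term is the crux: I would establish a crude bound on $\mathbb{E}_\epsilon[Q_f\mid X]$ uniform over all designs $X$ and over $f$ in the ball, growing at most polynomially in $n$. This uses that $\hat\mu_n>0$ forces the smallest eigenvalue of $R+\hat\mu_n I_n$ to be at least $\hat\mu_n\asymp n^{\alpha}$, which simultaneously bounds $\hat f_n$ (hence $\|f-\hat f_n\|_{L_2(\Omega)}$) and bounds $\tilde c_{n,\beta}$ away from zero through $\hat\sigma^2$ and the power-function term, so the ratio $Q_f$ cannot blow up. Multiplying this polynomial bound by $\mathbb{P}(A_n^c)\leq C\exp(-cn^{\eta})$ renders the $A_n^c$ contribution $o(1)$; it is dominated by the polynomially small right-hand side in part (ii) via Proposition \ref{propstocib}, completing the upper bounds.

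The main obstacle is precisely this crude, design-uniform polynomial control on $A_n^c$: one must rule out $(f-\hat f_n)/\tilde c_{n,\beta}(\cdot;\hat\mu_n)$ becoming uncontrollably large for atypical designs, which hinges on a lower bound for the predictive standard deviation $\tilde c_{n,\beta}$ valid for every $X$ when $\hat\mu_n>0$. For the lower-bound parts, the only delicate point is verifying that the worst-case function produced in the proof of Theorem \ref{COROCISTO} may be chosen independently of the design, so that the supremum commutes with the expectation over $X$ in the required direction.
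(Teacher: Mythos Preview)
Your approach is correct and is precisely what the paper has in mind: the paper gives no separate argument for the corollary beyond calling it a ``direct result'' of Theorem \ref{COROCISTO}, and the passage from the high-probability conditional statements to full expectations by splitting on the event $A_n$ is the natural route. Both delicate points you flag are indeed resolvable from the paper's constructions. For the lower bounds, the witness function in the proof of Theorem \ref{NONOPTFHAT} (and hence of Theorem \ref{COROCISTO}) is built through its Fourier transform using only $\hat\mu_n$ and absolute constants, so it does not depend on $X$; this lets you pull the supremum outside $\mathbb{E}_X$ in the required direction. For the crude design-uniform bound on $A_n^c$, the nugget $\hat\mu_n>0$ gives $\lambda_{\min}(R+\hat\mu_n I)\geq \hat\mu_n$ and, via the Schur-complement computation at design points together with \eqref{u2smallthan}, a polynomial lower bound on $1-r(x)^T(R+\hat\mu_n I)^{-1}r(x)$; combined with $\hat\sigma^2\geq (n(n+\hat\mu_n))^{-1}\|Y\|^2$ and the finiteness of inverse chi-squared moments for $n\geq 5$, this yields the needed polynomial control on $\mathbb{E}_\epsilon[Q_f\mid X]$, which is then annihilated by the exponentially small $\mathbb{P}(A_n^c)$.
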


Note that Theorem \ref{COROCISTO} and Corollary \ref{COROCOROCISTO} do not make any theoretical assertion about $L_2$-weak-reliability under the case $0\leq \alpha \leq \frac{d}{2\nu + d}$, and $L_2$-strong-reliability under the case $0\leq \alpha < \frac{d}{2\nu + d}$. As \eqref{mainpropstoeqexp2} indicates, we conjecture that the constructed confidence interval under the stochastic case is $L_2$-weakly-reliable if $ \alpha = \frac{d}{2\nu + d}$, and is $L_2$-strongly-reliable if $0\leq \alpha < \frac{d}{2\nu + d}$. We also note that \textit{(iii)} in Corollary \ref{COROCOROCISTO} does not imply the $L_2$-strong-reliability since we do not confirm the width of the confidence interval converges to zero when $\alpha < 0$.

Combining Corollary \ref{CORONONOPTFHAT} and Corollary \ref{COROCOROCISTO} suggests that if one applies the prediction and uncertainty quantification procedure from Gaussian process modeling to a deterministic function with noisy observations, the optimality of the predictor and the $L_2$-strong-reliability of the confidence interval cannot be achieved at the same time. 

As a by-product of Theorem \ref{NONOPTFHAT}, we show that if the observations are not corrupted by noise, and a regularization parameter is used as a counteract of the potential numerical instability \citep{peng2014choice}, then with uniformly distributed measurement locations, the prediction error can be controlled.
\begin{theorem}\label{thmnuggeterror}
Suppose $\sigma_\epsilon = 0$, $\hat \mu_n \asymp n^{\alpha}$ with $\alpha < 1$, and the correlation function $\Psi$ satisfies Condition \ref{C1}. Then for all $n$, with probability at least $1 - C_1\exp( - C_2 n^{\eta_1})$, we have
\begin{align*}
    \|f-\hat f_n\|_{L_\infty(\Omega)}^2 \leq C_3\max\left(n^{(\alpha - 1)(1 - \frac{d}{2\nu})}, n^{-(1 - \frac{d}{2\nu})}\right),
\end{align*}
where $C_1,C_2,C_3$ and $\eta_1$ are positive constants depending on $\Psi$, $\Omega$, and $f$.
\end{theorem}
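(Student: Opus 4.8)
The plan is to read Theorem~\ref{thmnuggeterror} as a statement about the \emph{bias} of the regularized predictor and then to lift an $L_2$ bias bound to an $L_\infty$ bound through a Sobolev interpolation inequality. Writing $Y=F+\epsilon$ with $F=(f(x_1),\dots,f(x_n))^T$, the noisy predictor $r(x)^T(R+\hat\mu_n I_n)^{-1}Y$ has conditional mean $\mathbb{E}_\epsilon[\hat f_n]=r(x)^T(R+\hat\mu_n I_n)^{-1}F$, which is exactly \eqref{prehatsto} evaluated at noiseless data. Hence the object to be controlled is precisely the bias term already handled in the upper-bound portions of Theorem~\ref{NONOPTFHAT}, so the design-concentration estimates of Lemmas~\ref{lemmaofnonpara} and \ref{lemmaerrorwnug} apply directly; this is the sense in which the theorem is a by-product.

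First I would record two deterministic consequences of the variational (representer-theorem) description of $\hat f_n$: the noiseless predictor \eqref{prehatsto} is the minimizer over $g\in\mathcal{N}_\Psi(\Omega)$ of $\sum_{k=1}^n(g(x_k)-f(x_k))^2+\hat\mu_n\|g\|_{\mathcal{N}_\Psi(\Omega)}^2$. Evaluating this objective at the feasible point $g=f$ yields simultaneously $\|\hat f_n\|_{\mathcal{N}_\Psi(\Omega)}\le\|f\|_{\mathcal{N}_\Psi(\Omega)}$ and $\sum_{k=1}^n(f-\hat f_n)(x_k)^2\le\hat\mu_n\|f\|_{\mathcal{N}_\Psi(\Omega)}^2$. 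The first inequality, together with the triangle inequality and the norm equivalence $\mathcal{N}_\Psi(\Omega)=H^\nu(\Omega)$ from Lemma~\ref{coro1013}, gives the crucial a priori bound $\|f-\hat f_n\|_{H^\nu(\Omega)}\le 2\|f\|_{\mathcal{N}_\Psi(\Omega)}$, so the residual stays in a fixed $H^\nu$-ball of radius depending only on $f$. The second inequality says the empirical error obeys $\tfrac1n\sum_{k=1}^n(f-\hat f_n)(x_k)^2\le\hat\mu_n\|f\|_{\mathcal{N}_\Psi(\Omega)}^2/n\asymp n^{\alpha-1}$.

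Next I would transfer the empirical error to the continuous $L_2(\Omega)$-error over the random uniform design. Since $f-\hat f_n$ lies in a fixed $H^\nu$-ball with controlled values on $X$, the machinery behind Lemmas~\ref{lemmaofnonpara} and \ref{lemmaerrorwnug} gives, on an event of probability at least $1-C_1\exp(-C_2 n^{\eta_1})$, a bound $\|f-\hat f_n\|_{L_2(\Omega)}^2\lesssim\max\big(n^{\alpha-1},n^{-1}\big)$: the first term comes from the empirical error above, and the second is an approximation floor controlled through the power-function estimate $\mathcal{P}_{\Psi,X}\lesssim h_{X,\Omega}^{\nu-d/2}$ of Lemma~\ref{Th:Matern}. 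I would then invoke the interpolation inequality $\|g\|_{L_\infty(\Omega)}\le C\|g\|_{L_2(\Omega)}^{1-d/(2\nu)}\|g\|_{H^\nu(\Omega)}^{d/(2\nu)}$, valid for $\nu>d/2$; on $\RR^d$ this follows by splitting the Fourier integral of $g$ at radius $M=(\|g\|_{H^\nu}/\|g\|_{L_2})^{1/\nu}$ and bounding the low- and high-frequency parts by $CM^{d/2}\|g\|_{L_2}$ and $CM^{d/2-\nu}\|g\|_{H^\nu}$ respectively (the tail integral $\int_{\|\omega\|_2>M}(1+\|\omega\|_2^2)^{-\nu}\,d\omega$ converges exactly because $\nu>d/2$), and one passes to $\Omega$ through a bounded Sobolev extension. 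Applying this to $g=f-\hat f_n$ and using $\|f-\hat f_n\|_{H^\nu(\Omega)}\lesssim 1$ converts the $L_2$ rate into the $L_\infty$ rate raised to the power $1-d/(2\nu)$, giving $\|f-\hat f_n\|_{L_\infty(\Omega)}^2\lesssim\max(n^{\alpha-1},n^{-1})^{1-d/(2\nu)}=\max\big(n^{(\alpha-1)(1-d/(2\nu))},n^{-(1-d/(2\nu))}\big)$, which is the claim.

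The main obstacle is the concentration step. One must make the discrete-to-continuous transfer quantitative and uniform enough to hold on an exponentially-large-probability event while keeping the dominant rate at the clean level $n^{\alpha-1}$. The delicate point is that a generic empirical-process norm equivalence over an $H^\nu$-ball only supplies an additive slack of order $n^{-2\nu/(2\nu+d)}$, which is too coarse to reproduce the stated floor; the sharper floor must instead come from the fill-distance/power-function side (of order $h_{X,\Omega}^{2\nu}$), so I expect to combine a multiplicative comparison for the signal part with a fill-distance bound for the approximation part. Finally one must verify that the logarithmic factors carried by the random fill distance $h_{X,\Omega}\asymp(\log n/n)^{1/d}$ are absorbed into the stated exponents, which works because the approximation contribution $(\log n/n)^{(2\nu-d)/d}$ is dominated by $n^{-(1-d/(2\nu))}=n^{-(2\nu-d)/(2\nu)}$ once $n$ is large, the exponent gap $(2\nu-d)/d-(2\nu-d)/(2\nu)>0$ leaving room for the polylogarithmic factor. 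Everything else—the variational bounds and the Fourier-analytic interpolation inequality—is routine.
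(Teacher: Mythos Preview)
Your strategy—bound the empirical error via the variational characterization, transfer to $\|f-\hat f_n\|_{L_2(\Omega)}$ by concentration, then lift to $L_\infty$ by Gagliardo--Nirenberg—is different from the paper's. The paper simply invokes Lemma~\ref{lemmaerrorwnug}: its first part is the \emph{deterministic} pointwise inequality
\[
(f(x)-\hat f_n(x))^2\le\bigl(1-r(x)^T(R+\hat\mu_n I_n)^{-1}r(x)\bigr)\,\|f\|_{\mathcal{N}_\Psi(\Omega)}^2,
\]
and its second part bounds the regularized power function $1-r(x)^T(R+\hat\mu_n I_n)^{-1}r(x)\lesssim n^{(\alpha-1)(1-d/(2\nu))}$ uniformly in $x$. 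The second term in the $\max$ then comes from monotonicity of the regularized power function in $\hat\mu_n$ (for $\alpha<0$ one dominates by the $\alpha=0$ bound).

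Your route recovers the stated rate only for $\alpha>d/(2\nu+d)$; for smaller $\alpha$ there is a genuine gap, exactly the one you flag. The concentration floor for an $H^\nu$-ball is $\delta_n^2\asymp n^{-2\nu/(2\nu+d)}$, and after interpolation this gives $\|f-\hat f_n\|_{L_\infty}^2\lesssim n^{-(2\nu-d)/(2\nu+d)}$, strictly weaker than the claimed $n^{-(2\nu-d)/(2\nu)}$. The fix you sketch does not close this gap: splitting $f-\hat f_n=(f-\mathcal{I}_{\Psi,X}f)+(\mathcal{I}_{\Psi,X}f-\hat f_n)$ only relocates the problem, since the second piece has the \emph{same} empirical norm (the interpolant agrees with $f$ on $X$) and lives in the same $H^\nu$-ball, so the identical floor reappears when you run concentration on it. A sampling inequality applied directly to $f-\hat f_n$ also fails, because the residual on the design is only controlled as $\|(f-\hat f_n)|_X\|_{\ell_2}^2\le\hat\mu_n\|f\|_{\mathcal{N}_\Psi(\Omega)}^2$, which is not small when $\alpha\ge 0$.

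What makes the paper's argument go through is a bootstrapping structure inside the proof of Lemma~\ref{lemmaerrorwnug}. The interpolation inequality is applied not to $f-\hat f_n$ but to $g_x-f_1$, where $g_x(t)=\Psi(x-t)$ and $f_1(t)=r(x)^T(R+\hat\mu_n I_n)^{-1}r(t)$. Here $\|g_x-f_1\|_{\mathcal{N}_\Psi(\Omega)}^2\le 1-r(x)^T(R+\hat\mu_n I_n)^{-1}r(x)$ is the very quantity being estimated, and $\|g_x-f_1\|_n^2\le \tfrac{2\hat\mu_n}{n}\,(1-r(x)^T(R+\hat\mu_n I_n)^{-1}r(x))$ as well. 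Plugging both into the interpolation inequality yields a self-referential inequality that can be solved for the regularized power function, producing the sharp exponent $(\alpha-1)(1-d/(2\nu))$ without a spurious floor. In your approach $\|f-\hat f_n\|_{H^\nu(\Omega)}$ is only bounded by a constant, so no such bootstrap is available; this is the missing idea.
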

Theorem \ref{thmnuggeterror} is a direct result of Lemma \ref{lemmaerrorwnug} in Appendix \ref{secpfoftheomnonoptfhat}. Theorem \ref{thmnuggeterror} states that $\hat f_n$ defined in \eqref{prehatsto} converges to the true underlying function $f$, even if the observations are noiseless. Note that in this theorem, it is allowed that the regularization parameter value increases as the sample size increases.

\section{A numerical example}\label{secnum}
We present a numerical example to illustrate the results in Section \ref{secdeter}, where we show that the confidence interval is not reliable in the deterministic case.

Recall that Theorem \ref{PROPCIDETER} states that there exists a function with smoothness (at least) $\nu$ such that the confidence interval is not $L_p$-reliable. However, such a function is generally not straightforward to find. In this section, we numerically illustrate that there exists a function such that the confidence interval is not $L_p$-reliable. 

Consider the following function \citep{gramacy2012cases},
\begin{align}\label{test1dfun}
    f(x) = \sin(4x) - 0.02 t_1(x;1.57,0.05)
\end{align}
for $x\in [0,1]$, where $t_1(\cdot;J_\mu,J_\sigma)$ is a Cauchy density with mean $J_\mu$ and spread $J_\sigma$. In \cite{gramacy2012cases}, it is shown that using a Gaussian correlation function yields a poor coverage rate. In this section we use a Mat\'ern correlation function. The numerical results suggest that with a Mat\'ern correlation function, the Gaussian process model does not provide an $L_p$-reliable confidence interval. As in Section \ref{secdeter}, we compute
\begin{align*}
    \|(f-\hat f_n)/|\widehat{CI}_{n,\beta}|\|_{L_{p}(\Omega)},
\end{align*}
where $f$ is as in \eqref{test1dfun}, $\hat f_n$ is as in \eqref{krigingidefn}, $\widehat{CI}_{n,\beta}$ is as in \eqref{CIZxestdeter}, and $p=4$. We use a Mat\'ern correlation function as in \eqref{matern} with $\nu = 3.5$. It can be checked that $f\in \mathcal{N}_{\Psi_M}([0,1])$. We consider the 95\% confidence interval constructed by the Gaussian process modeling. Thus, $\beta = 0.05$. We set the number of measurement locations as $n=20k$, $k=2,...,20$, and the measurement locations are grid points. We use  
\begin{align}\label{deternumapp}
    \mathcal{E} = \frac{1}{500}\sum_{j=1}^{500} \frac{|f(x_j) - \hat f_n(x_j)|^4}{|\widehat{CI}_{n,\beta}(x_j)|^4}
\end{align}
to approximate $\|(f-\hat f_n)/|\widehat{CI}_{n,\beta}|\|_{L_{4}(\Omega)}^4$, where $x_j$'s are the first 500 points in the Halton sequence \citep{niederreiter1992random}. This should give a good approximation since the points are dense enough. We add a jitter $10^{-8}$ to stabilize the computation of the matrix inverse in \eqref{krigingidefn} and \eqref{CIZxestdeter}. The results are shown in Panel 1 of Figure \ref{deter1dfig}.

We use the following approach to numerically show the rate of divergence of the ratio of the prediction error divided by the width of the confidence interval. By Theorem \ref{PROPCIDETER}, we have
\begin{align}\label{deterfunn1}
    \sup_{\|g\|_{\mathcal{N}_\Psi(\Omega)} \leq 1} \|(g-\hat g_n)/|\widehat{CI}_{n,\beta}|\|_{L_{4}(\Omega)}^4 \geq C n.
\end{align}
Taking logarithm on both sides of \eqref{deterfunn1}, we have 
\begin{align}\label{deterfunn2}
    \log \left(\sup_{\|g\|_{\mathcal{N}_\Psi(\Omega)} \leq 1} \|(g-\hat g_n)/|\widehat{CI}_{n,\beta}|\|_{L_{4}(\Omega)}^4 \right) \geq \log n + \log C.
\end{align}
We regress 
\begin{align*}
    \log\left(\frac{1}{500}\sum_{j=1}^{500} \frac{|f(x_j) - \hat f_n(x_j)|^4}{|\widehat{CI}_{n,\beta}(x_j)|^4}\right)
\end{align*}
on $\log n$. If the regression coefficient is larger than one, it indicates that the results in Theorem \ref{PROPCIDETER} hold. The results are shown in Panel 2 of Figure \ref{deter1dfig}.

Next, we consider two approaches for constructing confidence intervals described in Section \ref{subsecdet2other}. Similarly, we compute
\begin{align}\label{deternumapp2}
    \frac{1}{500}\sum_{j=1}^{500} \frac{|f(x_j) - \hat f_n(x_j)|^4}{|\widehat{CI}_{n,\beta}(x_j)|^4},
\end{align}
with $\hat \sigma^2 = C$ and $\hat \sigma^2 = Y^TR^{-1}Y$ in $\widehat{CI}_{n,\beta}$, respectively, where $C$ is a constant satisfying $\|f\|_{\mathcal{N}_\Psi([0,1])}^2\leq C$. In this example, we set $C = 2\tilde Y^T\tilde R^{-1}\tilde Y$, where $\tilde Y = (f(\tilde x_1),...,f(\tilde x_{1000}))^T$, $\tilde R = (\Psi(\tilde x_j-\tilde x_k))_{jk}$, and $\tilde x_j$'s are 1000 grid points. Since $\tilde Y^T\tilde R^{-1}\tilde Y$ should be a good approximation of $\|f\|_{\mathcal{N}_\Psi([0,1])}^2$, $C$ should be a valid upper bound of  $\|f\|_{\mathcal{N}_\Psi([0,1])}^2$. The results are shown in Panels 3 and 4 of Figure \ref{deter1dfig}.

\begin{figure}[h!]
    \centering
    \begin{subfigure}
        \centering
        \includegraphics[height=2.3in]{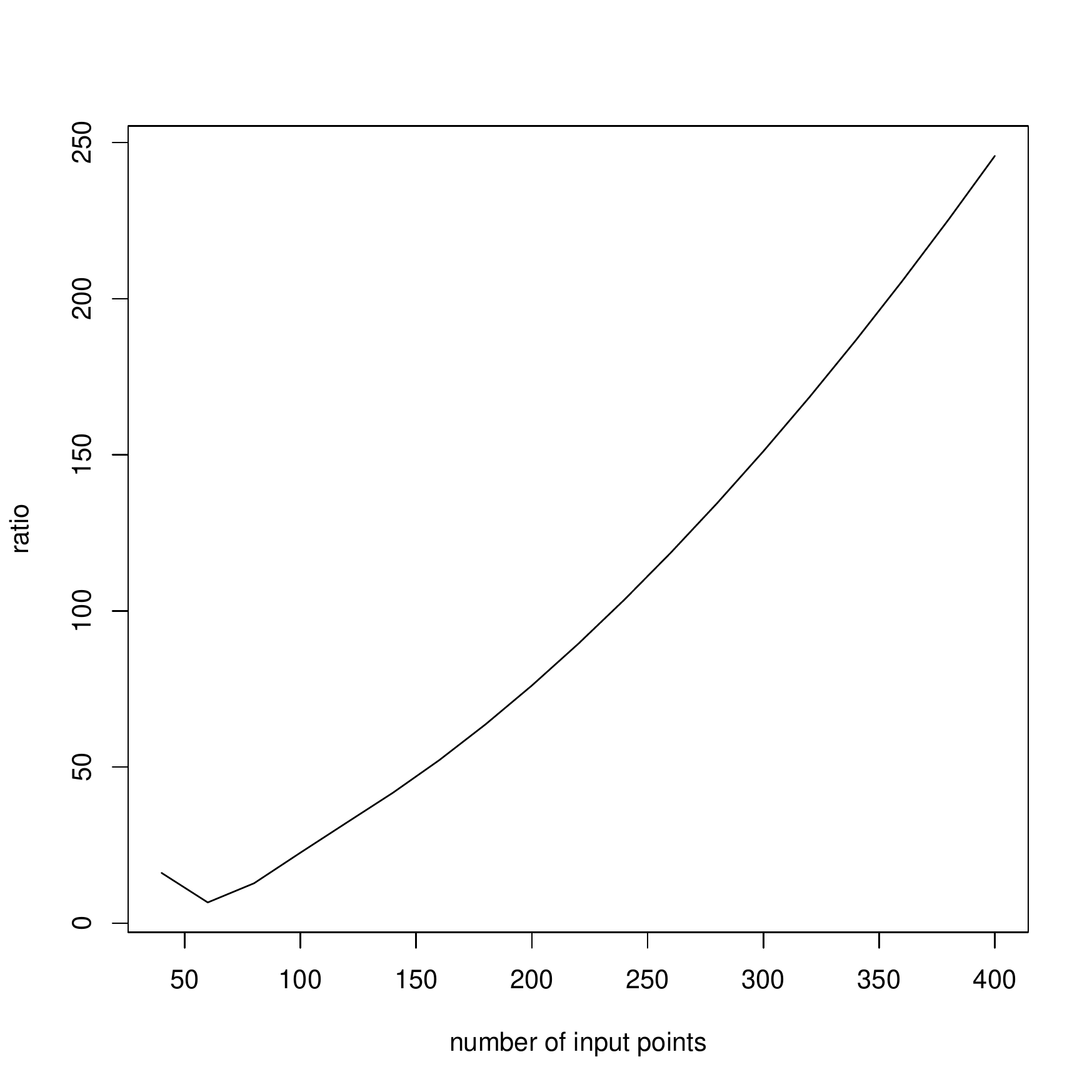}
    \end{subfigure}
    \begin{subfigure}
        \centering
        \includegraphics[height=2.3in]{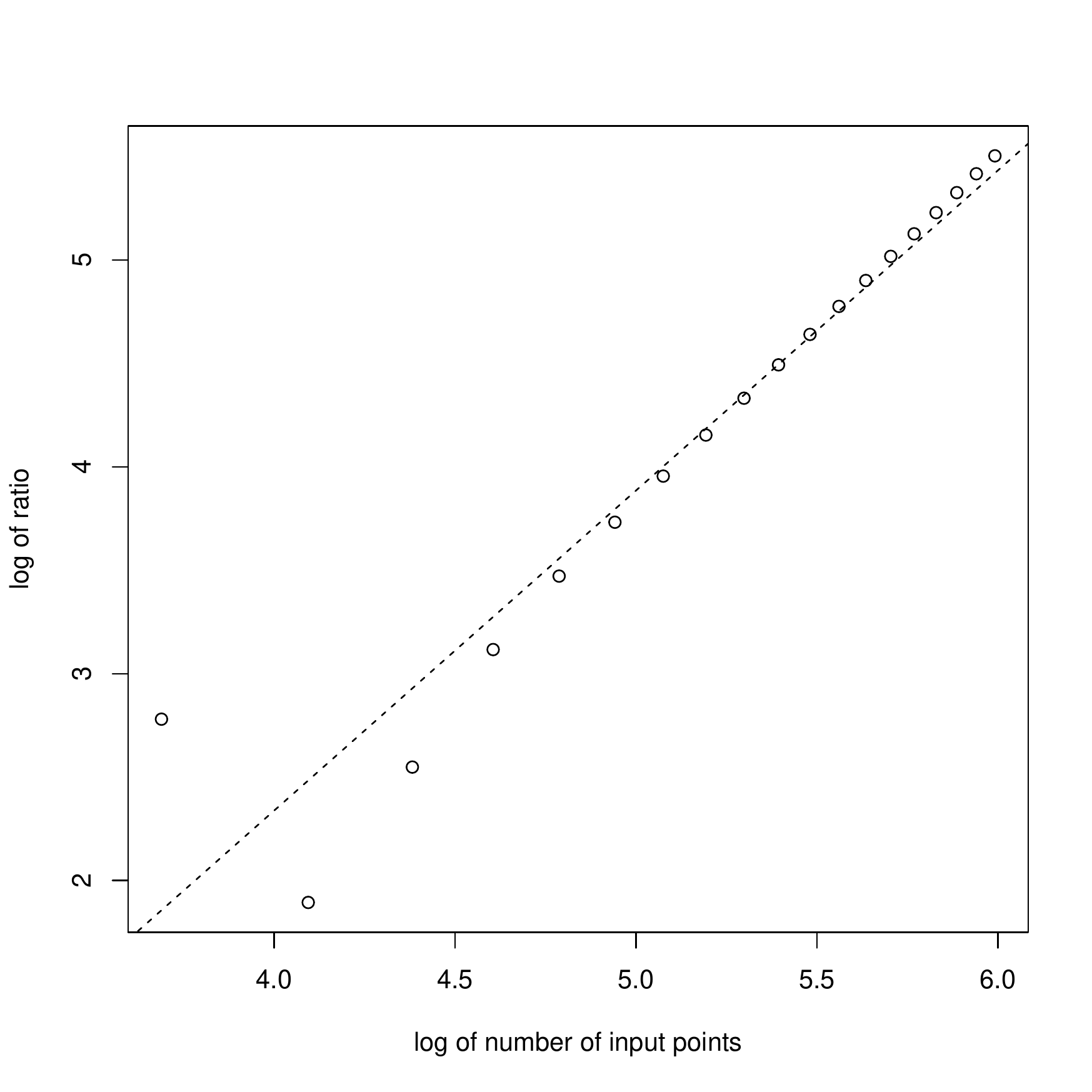}
    \end{subfigure}
    \begin{subfigure}
        \centering
        \includegraphics[height=2.3in]{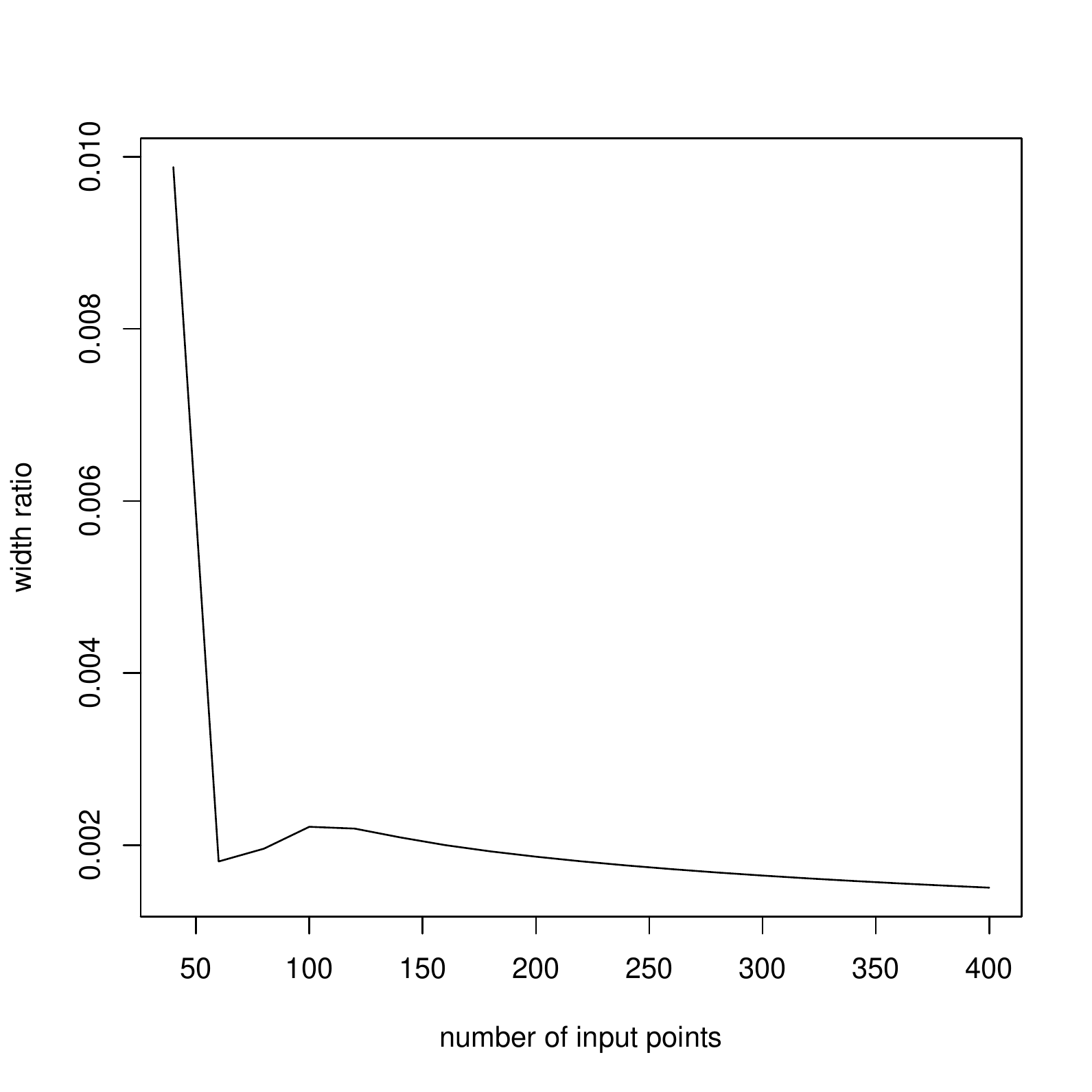}
    \end{subfigure}
    \begin{subfigure}
        \centering
        \includegraphics[height=2.3in]{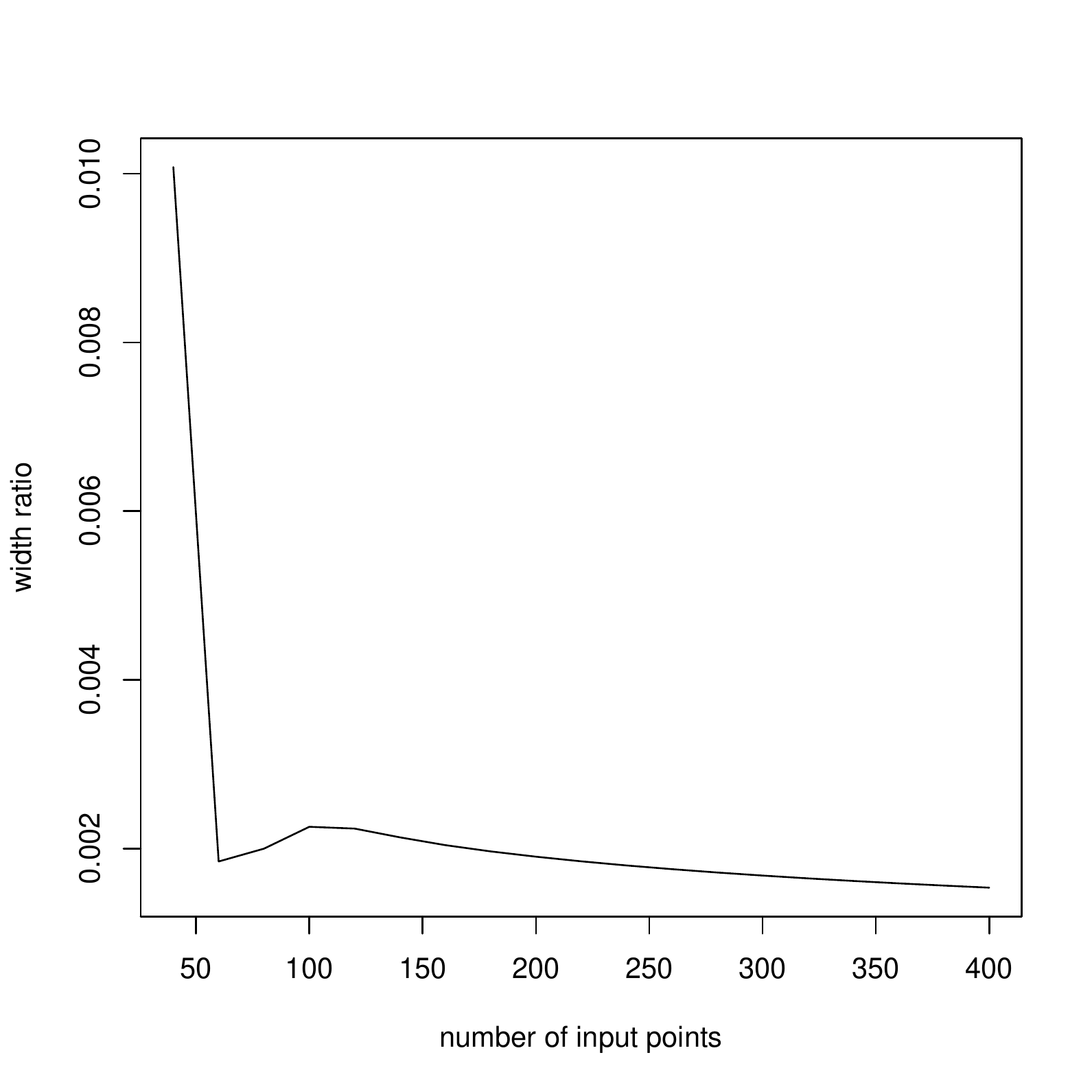}
    \end{subfigure}
    \caption{{\bf Panel 1: }Plot of $\mathcal{E}$ in \eqref{deternumapp} with $\hat \sigma^2 = Y^TR^{-1}Y/n$. {\bf Panel 2: } The regression line of $\log\mathcal{E}$ on $\log n$. The dashed line shows the regression line. Each point denotes $\log\mathcal{E}$ for each number of measurement locations. The regression coefficient is 1.548. {\bf Panel 3: }Plot of $\log\mathcal{E}$ with $\hat \sigma^2 = C$. {\bf Panel 4: }Plot of $\log\mathcal{E}$ with $\hat \sigma^2 = Y^TR^{-1}Y$.}
    \label{deter1dfig}
\end{figure}

It can be seen from Panel 1 of Figure \ref{deter1dfig} that $\mathcal{E}$ in \eqref{deternumapp} increases as the number of measurement locations increases. From Panel 2 of Figure \ref{deter1dfig}, we can see that the regression line fits the data relatively well. The regression coefficient is 1.548, which is larger than one. 
This gives us an empirical confirmation that our results in Theorem \ref{PROPCIDETER} are valid, and there exists a function such that the confidence interval is not $L_p$-reliable. As indicated by Figure \ref{deter1dfig}, we believe the results in Theorem \ref{PROPCIDETER} can be improved. In Panel 3, although the ratio increases, the largest value of the ratio is only about $10^{-4}$. Panels 3 and 4 indicate that the two approaches in Section \ref{subsecdet2other} can provide reliable confidence intervals. It can be seen that the confidence interval derived by setting $\hat \sigma^2=C$ may be conservative (the ratio is very small). Therefore, other uncertainty quantification methods may be considered if the underlying function is known to be in some reproducing kernel Hilbert space.

\section{Conclusions and discussion}\label{secdiscuss}
In this work, we consider the prediction and uncertainty quantification of the Gaussian process model applied to a fixed function in the corresponding reproducing kernel Hilbert space from a frequentist perspective. The model is misspecified under Assumption$^*$ \ref{assummis}. We consider two cases, the deterministic case, in which the observations are noiseless, and the stochastic case, where the observations are corrupted by noise. In both cases, we assume that the variance is estimated by maximum likelihood estimation, according to Assumption$^*$ \ref{assummis}. 
In the deterministic case, we show that the constructed confidence interval in the Gaussian process model is not $L_p$-weakly-reliable for $p>2$, and is not $L_2$-strongly-reliable. We also present two reliable confidence intervals under some scenarios. In the stochastic case, the regularization parameter value is assumed to be at a certain rate. We show that the predictor derived by the Gaussian process modeling is not optimal and/or the constructed confidence interval is not $L_2$-strongly-reliable. These results indicate that the optimal predictor and $L_2$-strong-reliability cannot be achieved at the same time if the Gaussian process model is misspecified. As by-products, we obtain several lower bounds on the mean squared prediction error under different choices of the regularization parameter value.

In the Gaussian process model, it is often assumed that there are some unknown parameters, and maximum likelihood estimation or Bayesian methods, are used to estimate these parameters, even if the Gaussian process model is misspecified. Prediction performance of the Gaussian process model with maximum likelihood estimation under misspecification has been studied by \cite{stein1993spline,bachoc2018asymptotic}.
In \cite{stein1993spline}, they show for periodic functions, under some situations, the misspecified Gaussian process model with maximum likelihood estimation can still work well in terms of prediction. If the underlying truth is indeed a Gaussian process, using a misspecified correlation function and maximum likelihood estimation may not have desired prediction performance, as suggested in \cite{bachoc2018asymptotic}.

In addition to prediction, uncertainty quantification is another important problem in computer experiments. Because the Gaussian process model has a probabilistic structure, models based on Gaussian process modeling are usually validated via confidence intervals. In order to test the performance of these models, typically, several simulations are conducted. In some literature, the test function is selected to be a deterministic function with a closed form. In these cases, the Gaussian process model is usually misspecified, i.e., the function may not be a sample path of the corresponding Gaussian process. However, an imposed pointwise confidence interval is still constructed and used to quantify the uncertainty. It has been observed that Gaussian process models often have poor coverage of their confidence intervals \citep{gramacy2012cases,joseph2011regression,yamamoto2000alternative}. There is no theoretical result explaining this phenomenon from a frequentist perspective to the best of our knowledge. Our results provide some insights on explaining the poor coverage of the confidence interval, and a better understanding of the model misspecification in Gaussian process modeling.

Several statistical inference methods have been studied if the confidence interval is not derived directly from the Gaussian process modeling. Most of them are from a Bayesian perspective. For example, in \cite{sniekers2015credible,yoo2016supremum}, credible intervals are constructed and analyzed for Gaussian process models (or particularly Brownian motion). Additionally, \cite{yang2017frequentist,yano2018frequentist} derive finite sample bounds on frequentist coverage errors of Bayesian credible intervals for Gaussian process models. Therefore, one can also use other inference methods besides the confidence interval in the Gaussian process modeling.

Several problems are not considered in this work. First, we only consider uniformly distributed measurement locations in the stochastic case, where fixed designs are not considered. Second, we only consider the case that the regularization parameter value is predetermined with a certain rate. We could not confirm similar results if we use maximum likelihood estimation to estimate the regularization parameter value, or select parameter values using other criteria as in \cite{kou2003efficiency}. Also, we do not consider using maximum likelihood estimation to estimate the parameters of the correlation function $\Psi$. Therefore, a thorough investigation of applying maximum likelihood estimation under misspecification is needed. Third, as discussed in Section \ref{subsecofCI}, Definition \ref{defreci} is a necessary condition. One possible way to improve this definition is to restrict the $L_p$-norm of the ratio such that it is bounded away from zero. 

\section*{Acknowledgments}
The author is grateful to the AE and three reviewers for their constructive comments. The author especially appreciates one reviewer's valuable suggestions that improve this paper a lot.

\begin{appendix}

\section*{Appendix}
\setcounter{section}{0}
\numberwithin{equation}{section}
\numberwithin{lemma}{section}
\section{Notation}
We use $\langle \cdot, \cdot \rangle_n$ to denote the empirical inner product, which is defined by $$\langle f, g \rangle_n = \frac{1}{n}\sum_{k=1}^n f(x_k)g(x_k)$$ for two functions $f$ and $g$, and let $\|g\|_n^2 = \langle g, g \rangle_n$ be the empirical norm of function $g$. In particular, let $$\langle \epsilon, f \rangle_n = \frac{1}{n}\sum_{k=1}^n\epsilon_k f(x_k)$$ for a function $f$, where $\epsilon = (\epsilon_1,\ldots,\epsilon_n)^T$. Let $a\vee b =\max(a,b)$ for two real numbers $a,b$. We use $H(\cdot, \mathcal{F}, \|\cdot\|)$ and $H_B(\cdot,\mathcal{F},\|\cdot\|)$ to denote the entropy number and the bracket entropy number of class $\mathcal{F}$ with the (empirical) norm $\|\cdot\|$, respectively. Through the proof, we assume Vol$(\Omega)=1$ for the ease of notational simplicity.

\section{Proof of Proposition \ref{propCIforGP}}
By \eqref{CIZx} and \eqref{cnxbetainCI}, it can be seen that $|CI_{n,\beta}(x)| = 2q_{1-\beta/2}  \sqrt{\text{Var}[Z(x)|\mathcal{Z}]}$ for a point $x$. 
By Fubini's theorem, 
\begin{align*}
\mathbb{E}\|(Z-I_X^{(1)} Z)/|CI_{n,\beta}|\|_{L_{p}(\Omega)}^p & = \int_{x\in \Omega}\frac{\mathbb{E}(Z(x)-I_X^{(1)} Z(x))^p}{C_1  (\sqrt{\text{Var}[Z(x)|\mathcal{Z}]})^{p}}dx\\
& = \int_{x\in \Omega}\frac{2^{p/2}\Gamma(\frac{p+1}{2})}{\sqrt{\pi}}\frac{(\sqrt{\text{Var}[Z(x)|\mathcal{Z}]})^p}{C_1  (\sqrt{\text{Var}[Z(x)|\mathcal{Z}]})^{p}}dx\\
& = C_2,
\end{align*}
where the second equality can be derived by the calculation of $p$th moment of normal distribution. See \cite{walck1996hand}.

\section{Proof of Theorem \ref{PROPCIDETER}}\label{apppfthPROPCIDETER}
The proof of Theorem \ref{PROPCIDETER} relies on approximation numbers. The $n$th approximation number of the embedding $id: H^{\nu}(\Omega) \rightarrow L_{p}(\Omega)$, denoted by $b_n$, is defined by
\begin{align}\label{defappnum}
    b_n = \inf \{\|id - L\|, L \in \mathcal{L}(H^{\nu}(\Omega), L_{p}(\Omega)), \text{rank}(L)<n\},
\end{align}
where $\mathcal{L}(H^{\nu}(\Omega), L_{p}(\Omega))$ is the family of all bounded linear mappings $H^{\nu}(\Omega) \rightarrow L_{p}(\Omega)$, $\|\cdot\|$ is the operator norm, and $\text{rank}(L)$ is the dimension of the range of $L$. In \eqref{defappnum}, we define rank$(L) = 0$ if $L(f) = 0$ for all $f\in H^{\nu}(\Omega)$. Therefore, it can be seen that $\|id\| = b_1 \geq b_2 \geq ... > 0$. Lemma \ref{appnumtoL2} states a property of approximation numbers \citep{edmunds2008function}.
\begin{lemma}\label{appnumtoL2}
Suppose $p\geq 2$. The approximation number $b_n$ defined in \eqref{defappnum} satisfies that for all $n\in \mathbb{N}$, 
\begin{align}\label{approxNum}
c_1 n^{-\frac{\nu}{d}+\frac{1}{2}-\frac{1}{p }} \leq b_n \leq c_2 n^{-\frac{\nu}{d}+\frac{1}{2}-\frac{1}{p }},
\end{align}
where $c_1$ and $c_2$ are two positive constants depending on $\Omega$, $\nu$ and $p$.
\end{lemma}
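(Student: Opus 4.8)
The plan is to recognize Lemma~\ref{appnumtoL2} as the special case $p_1=2$, $p_2=p$, $s_1=\nu$, $s_2=0$ of the general theory of $s$-numbers of Sobolev embeddings, and to obtain the two-sided estimate by reducing to a sequence-space model. First I would remove the domain. Since $\Omega$ is convex and compact it satisfies an interior cone condition and has Lipschitz boundary (as noted in the footnote), so there is a bounded linear extension operator $E\colon H^{\nu}(\Omega)\to H^{\nu}(\RR^d)$. Composing $E$ with multiplication by a fixed cutoff and with restriction, and using that approximation numbers are stable under composition with fixed bounded operators, I would reduce to estimating $a_n(id\colon H^{\nu}(Q)\to L_p(Q))$ on a fixed cube $Q$ (equivalently on the torus). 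This step is routine and costs only multiplicative constants.

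Next I would pass to sequences. Fix a compactly supported wavelet (or tensor $B$-spline) Riesz basis $\{\psi_{j,m}\}$ of smoothness exceeding $\nu$, with $L_2$-normalized generators and $\asymp 2^{jd}$ indices $m$ at each dyadic level $j$. Then $\|f\|_{H^{\nu}}^2\asymp\sum_j 2^{2j\nu}\sum_m|\lambda_{j,m}|^2$, so $H^{\nu}$ becomes a weighted $\ell_2$ space, while for $1<p<\infty$ the target norm $\|f\|_{L_p}$ is controlled by the Littlewood--Paley square function of the coefficients; one also has $\|\psi_{j,m}\|_{L_p}\asymp 2^{jd(1/2-1/p)}$. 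Because the norm equivalences are two-sided, it suffices to estimate the approximation numbers of the resulting block-diagonal operator, whose $j$th block is a rescaled identity $\iota_j\colon\ell_2^{N_j}\to\ell_p^{N_j}$ with $N_j\asymp 2^{jd}$.

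For the upper bound I would argue by simple truncation. Let $L$ be the projector that keeps all wavelet coefficients up to the level $J$ with $\sum_{j\le J}2^{jd}\asymp 2^{Jd}\asymp n$, so $\mathrm{rank}(L)\lesssim n$, and drops the rest. For $f$ in the unit ball of $H^{\nu}(Q)$ the dropped tail is estimated level by level: using $p\ge 2$ one has $\|\lambda_{j,\cdot}\|_{\ell_p}\le\|\lambda_{j,\cdot}\|_{\ell_2}\le 2^{-j\nu}$, hence the $L_p$-contribution of level $j$ is $\lesssim 2^{jd(1/2-1/p)}2^{-j\nu}=2^{-j(\nu-d(1/2-1/p))}$, and summing the geometric tail over $j>J$ gives $\|f-Lf\|_{L_p}\lesssim 2^{-J(\nu-d/2+d/p)}\asymp n^{-\nu/d+1/2-1/p}$. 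Thus $b_n\lesssim n^{-\nu/d+1/2-1/p}$, and the extra exponent $1/2-1/p$ is exactly the loss from measuring the error in $L_p$ rather than $L_2$.

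The matching lower bound is where I expect the real difficulty. The factor $n^{1/2-1/p}$ over the $L_2$ rate $n^{-\nu/d}$ is a manifestation of the cotype-$p$ geometry of $\ell_p$ (for $p\ge 2$) played against the Hilbertian source, and it is \emph{not} reproduced by testing the embedding on a single dyadic level: transporting a single-block estimate through the ideal property $a_n(P\circ id\circ J)\le\|P\|\,\|J\|\,a_n(id)$ with coefficient-extraction maps $J,P$ yields only the strictly weaker rate $n^{-\nu/d+2/p-1}$. Instead one must use sharp finite-dimensional lower bounds for the $s$-numbers $a_k(\iota_j)$ (equivalently, Kolmogorov widths of Euclidean balls in $\ell_p^{N}$), which are themselves proved by volumetric and probabilistic arguments on random Euclidean sections in the spirit of Kashin, Gluskin and Garnaev, and assemble them across scales. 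Getting these finite-dimensional estimates sharp in both directions for $2\le p$ and combining them correctly is the crux; the endpoint $p=\infty$, where the embedding lands in $C(\Omega)$, is the most delicate and needs separate care to exclude spurious logarithmic factors. In practice I would instead invoke the relevant theorem of \cite{edmunds2008function}, of which the stated estimate is a direct specialization.
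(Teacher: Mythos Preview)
Your proposal is correct and, in substance, aligned with the paper: the paper does not give an independent proof of this lemma at all but simply attributes it to \cite{edmunds2008function}, which is exactly the reference you invoke in your final sentence. Your sketch of the wavelet/sequence-space reduction, the truncation upper bound, and the identification of the finite-dimensional width lower bound as the delicate step is an accurate outline of how the cited result is actually established, so you have supplied considerably more detail than the paper itself, but the endpoint is the same citation.
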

Lemma \ref{Th:Matern} is a direct result of Theorem 5.14 of \cite{wu1993local}, which provides an upper bound on $\mathcal{P}_{\Psi_M,X}$ defined in \eqref{power}.

\begin{lemma}\label{Th:Matern}
	Let $\Omega$ be compact and convex with a positive Lebesgue measure; $\Psi_M$ be a Mat\'ern correlation function given by (\ref{matern}). Suppose Condition \ref{condoffill} holds for a sampling scheme $\mathcal{X}$. Then there exist constants $c,c_1$ depending only on $\Omega$, $\mathcal{X}$, and $\nu$ in (\ref{matern}), such that
	$\mathcal{P}_{\Psi_M,X}\leq c n^{-\frac{\nu}{d} + \frac{1}{2}}$ provided that $n^{-\frac{\nu}{d} + \frac{1}{2}}\leq c_1$ and $X\in \mathcal{X}$.
\end{lemma}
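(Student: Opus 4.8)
The plan is to reduce the claim to a pointwise estimate on the power function $P_{\Psi_M,X}$ that is standard in the scattered data approximation literature, and then substitute the fill-distance rate from Condition \ref{condoffill}. The structural facts I would invoke are that, because $\Psi_M$ satisfies Condition \ref{C1} with parameter $\nu$, its native space $\mathcal{N}_{\Psi_M}(\Omega)$ is norm-equivalent to the Sobolev space $H^\nu(\Omega)$ (Corollary 10.13 of \cite{wendland2004scattered}; see also Lemma \ref{coro1013}), and that $\Omega$ satisfies an interior cone condition and has a Lipschitz boundary as a consequence of being convex and compact. These are precisely the hypotheses under which the local error estimates of \cite{wu1993local} apply to the Mat\'ern kernel.

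First I would recall that the power function governs the worst-case interpolation error through \eqref{firstestimate}, and that for a kernel whose native space is $H^\nu(\Omega)$ the corresponding local estimate yields a bound of the form
\begin{align*}
P_{\Psi_M,X}(x) \leq C\, h_{X,\Omega}^{\nu - d/2}
\end{align*}
holding uniformly over $x \in \Omega$, provided the fill distance $h_{X,\Omega}$ is below some threshold $h_0 > 0$ determined by $\Omega$ and the cone parameters. The exponent $\nu - d/2$ is the gain in smoothness over $L_2$ afforded by membership in $H^\nu$, and it is positive since $\nu > d/2$. I would obtain this estimate by citing Theorem 5.14 of \cite{wu1993local}, after checking that the algebraic decay of the spectral density in Condition \ref{C1} matches the decay hypothesis of that theorem.

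The remaining steps are routine. Taking the supremum over $x \in \Omega$ in the display and using the definition \eqref{power} gives $\mathcal{P}_{\Psi_M,X} \leq C\, h_{X,\Omega}^{\nu - d/2}$. Substituting the rate $h_{X,\Omega} \asymp n^{-1/d}$ supplied by Condition \ref{condoffill} then yields
\begin{align*}
\mathcal{P}_{\Psi_M,X} \leq C\, (n^{-1/d})^{\nu - d/2} = c\, n^{-\nu/d + 1/2}.
\end{align*}
The smallness requirement $h_{X,\Omega} \leq h_0$ translates, under Condition \ref{condoffill}, into a lower bound on $n$, equivalently the stated hypothesis $n^{-\nu/d + 1/2} \leq c_1$ for a suitable $c_1$; this is the only place the ``provided that'' clause is used.

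The main obstacle is the first step: securing the pointwise power-function bound with the sharp exponent $\nu - d/2$. This requires matching the algebraic-decay condition of Condition \ref{C1} to the framework of \cite{wu1993local} and, crucially, verifying that the constant in the local estimate can be taken uniform in $x \in \Omega$. The global bound follows from the local ones precisely because $\Omega$ is compact and every point (including boundary points) admits a cone of fixed aperture, so finitely many local patches cover $\Omega$ with a common constant. Once this uniform estimate is in hand, the supremum and the substitution of the fill-distance rate are immediate.
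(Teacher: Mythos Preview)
Your approach is correct and matches the paper's exactly: the paper simply states that Lemma \ref{Th:Matern} is a direct result of Theorem 5.14 of \cite{wu1993local} and omits further detail, and you have spelled out the routine reduction (pointwise power-function bound $P_{\Psi_M,X}(x)\le C\,h_{X,\Omega}^{\nu-d/2}$, take the supremum, substitute $h_{X,\Omega}\asymp n^{-1/d}$ from Condition \ref{condoffill}). There is nothing missing and no divergence from the paper's argument.
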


Lemma \ref{coro1013} is a direct result of Corollary 10.13 in \cite{wendland2004scattered} and the extension theorem \citep{devore1993besov}. Lemma \ref{coro1013} states that the reproducing kernel Hilbert space $\mathcal{N}_{\Psi}(\Omega)$ coincides with the Sobolev space with smoothness $\nu$ $H^{\nu}(\Omega)$, for correlation functions satisfying Condition \ref{C1}.  
\begin{lemma}\label{coro1013}
	Suppose Condition \ref{C1} is satisfied. We have the following.
	\begin{enumerate}
	    \item The reproducing kernel Hilbert space $\mathcal{N}_{\Psi}(\RR^d)$ coincides with the Sobolev space with smoothness $\nu$ $H^{\nu}(\RR^d)$, and the norms $\|\cdot\|_{\mathcal{N}_{\Psi}(\RR^d)}$ and $\|\cdot\|_{H^\nu(\RR^d)}$ are equivalent.
	    \item Suppose $\Omega$ is compact and convex. Then the reproducing kernel Hilbert space $\mathcal{N}_{\Psi}(\Omega)$ coincides with the Sobolev space with smoothness $\nu$ $H^{\nu}(\Omega)$, and the norms $\|\cdot\|_{\mathcal{N}_{\Psi}(\Omega)}$ and $\|\cdot\|_{H^\nu(\Omega)}$ are equivalent.
	\end{enumerate} 
\end{lemma}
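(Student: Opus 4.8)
The plan is to prove the two assertions in turn, deriving the domain statement from the $\RR^d$ statement by means of a bounded linear extension operator.

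First I would establish the $\RR^d$ statement, which is exactly Corollary 10.13 of \cite{wendland2004scattered}. The starting point is to identify $\mathcal{F}(\Psi)$ with the spectral density $f_\Psi$: comparing the Bochner representation $\Psi(h)=\int_{\RR^d} e^{i\omega^T h}f_\Psi(\omega)\,d\omega$ with the inverse of the Fourier transform from Section \ref{subsecrkhs} shows $\mathcal{F}(\Psi)(\omega)=(2\pi)^{d/2}f_\Psi(\omega)$. Condition \ref{C1} then yields $\mathcal{F}(\Psi)(\omega)\asymp(1+\|\omega\|_2^2)^{-\nu}$, so that $1/\mathcal{F}(\Psi)(\omega)\asymp(1+\|\omega\|_2^2)^{\nu}$ with constants uniform in $\omega$. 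Inserting this two-sided bound into the defining integrals of $\|\cdot\|_{\mathcal{N}_\Psi(\RR^d)}$ (Definition \ref{Def:NativeSpace}) and $\|\cdot\|_{H^\nu(\RR^d)}$ makes the two integrands pointwise comparable, whence the norms are equivalent. Since each space consists of exactly those functions with finite norm, with continuity being automatic for $\nu>d/2$ by the Sobolev embedding, the norm equivalence forces the two spaces to coincide as sets.

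Next I would transfer this to $\Omega$ using that both $\mathcal{N}_\Psi(\Omega)$ and $H^\nu(\Omega)$ are, by definition, restriction (quotient) norms of their counterparts on $\RR^d$. For the upper bound, given $f\in H^\nu(\Omega)$ I would invoke the extension theorem for Sobolev spaces of fractional order on domains with Lipschitz boundary \citep{devore1993besov}, applicable because a compact convex $\Omega$ has Lipschitz boundary and satisfies the interior cone condition, to obtain $Ef\in H^\nu(\RR^d)$ with $(Ef)|_\Omega=f$ and $\|Ef\|_{H^\nu(\RR^d)}\lesssim\|f\|_{H^\nu(\Omega)}$; the $\RR^d$ statement then gives $\|Ef\|_{\mathcal{N}_\Psi(\RR^d)}\lesssim\|Ef\|_{H^\nu(\RR^d)}$, and since $Ef$ is an admissible extension the infimum defining $\|f\|_{\mathcal{N}_\Psi(\Omega)}$ is bounded by $\|Ef\|_{\mathcal{N}_\Psi(\RR^d)}\lesssim\|f\|_{H^\nu(\Omega)}$. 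For the lower bound, given $f\in\mathcal{N}_\Psi(\Omega)$ and any $\varepsilon>0$ I would pick an extension $f_E\in\mathcal{N}_\Psi(\RR^d)$ with $\|f_E\|_{\mathcal{N}_\Psi(\RR^d)}\leq\|f\|_{\mathcal{N}_\Psi(\Omega)}+\varepsilon$; the $\RR^d$ statement yields $\|f_E\|_{H^\nu(\RR^d)}\lesssim\|f_E\|_{\mathcal{N}_\Psi(\RR^d)}$, and since restriction to $\Omega$ cannot increase a quotient norm, $\|f\|_{H^\nu(\Omega)}\leq\|f_E\|_{H^\nu(\RR^d)}\lesssim\|f\|_{\mathcal{N}_\Psi(\Omega)}+\varepsilon$. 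Letting $\varepsilon\to0$ completes the two-sided bound, and the coincidence of the spaces again follows since the two restriction norms are finite on exactly the same functions.

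The main obstacle will be the passage from $\RR^d$ to $\Omega$, specifically justifying the bounded extension operator at the fractional smoothness $\nu>d/2$. For integer $\nu$ this is the classical Stein extension, but for general $\nu$ one must appeal to the construction in \cite{devore1993besov} (or an equivalent interpolation argument), and it is precisely here that the geometric regularity of $\Omega$, namely the Lipschitz boundary guaranteed by compactness and convexity as noted in the footnote of Section \ref{secGPmodel}, is indispensable. Everything else is a direct consequence of the pointwise spectral comparison underlying the $\RR^d$ statement.
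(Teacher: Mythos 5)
Your argument is correct and matches the paper's route exactly: the paper proves this lemma simply by citing Corollary 10.13 of \cite{wendland2004scattered} for the $\RR^d$ statement and the extension theorem of \cite{devore1993besov} for the passage to $\Omega$, which are precisely the two ingredients you spell out (the spectral comparison $\mathcal{F}(\Psi)\asymp(1+\|\omega\|_2^2)^{-\nu}$ giving norm equivalence on $\RR^d$, then the quotient-norm plus bounded-extension argument on the compact convex domain). Your expanded details, including the use of the Lipschitz boundary guaranteed by convexity and compactness, are consistent with the paper's footnoted justification.
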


Now we are ready to prove Theorem \ref{PROPCIDETER}.

By Lemma \ref{appnumtoL2}, there exists a function $\phi_n\in H^\nu(\Omega)$ satisfying $\|\phi_n\|_{H^\nu(\Omega)} = 1$ such that
\begin{align*}
c_1n^{-\frac{2\nu}{d}+1-\frac{2}{p}} & \leq \|\phi_n - \mathcal{I}_{\Psi,X}\phi_n\|_{L_p(\Omega)}^2,
\end{align*}
since $\mathcal{I}_{\Psi,X}$ is a rank $n$ linear operator. By Lemma \ref{Th:Matern}, \eqref{anindentity}, and Lemma \ref{coro1013}, we have for sufficiently large $N$ such that $N^{-\frac{\nu}{d} + \frac{1}{2}} \leq h_0$ and for all $n\geq N$,
\begin{align*}
\hat c_{n,\beta}(x)^2 & = \frac{C_1}{n}Y^T R^{-1}Y(1 - r(x)^TR^{-1}r(x)) \leq \frac{C_1}{n}Y^T R^{-1}Y \mathcal{P}_{\Psi_M,X}^2\\
& \leq \frac{C_2}{n} \|\phi_n\|_{\mathcal{N}_{\Psi}(\Omega)}^2 n^{-\frac{2\nu}{d}+1} \leq \frac{C_3}{n} \|\phi_n\|_{H^\nu(\Omega)}^2 n^{-\frac{2\nu}{d} + 1} = C_3n^{-\frac{2\nu}{d}},
\end{align*}
for any $x\in \Omega$, where $Y=(\phi_n(x_1),...,\phi_n(x_n))^T$. Let $f$ in \eqref{mainpropdetereq} be equal to $\phi_n$. Therefore, we have 
\begin{align*}
    \|(f - \hat f_n)/\hat c_{n,\beta}\|_{L_p(\Omega)}^2 & \geq C_4 \|f - \hat f_n\|_{L_p(\Omega)}^2n^{\frac{2\nu}{d}} \geq C_5 n^{1-\frac{2}{p}},
\end{align*}
which finishes the proof of \eqref{mainpropdetereq}.

The case $p=2$ can be proved similarly. The only difference is that we let $f = a_n\phi_n$ such that $\|a_n\phi_n\|_{H^\nu(\Omega)} = a_n$.

\section{Proof of Theorem \ref{propsec32relicon}}
We first show that \eqref{relidetereq1} holds. Plugging $\hat c_{n,\beta}(x) = q_{1-\beta/2}P_{\Psi,X}(x)$, it suffices to show 
\begin{align*}
    \left|\frac{f(x) - \hat f_n(x)}{P_{\Psi,X}(x)}\right| \leq 1
\end{align*}
holds for all $f$ with $\|f\|_{\mathcal{N}_\Psi(\Omega)}\leq 1$ and $x\in \Omega$. This is a direct result of \eqref{firstestimate}. 

The second inequality \eqref{relidetereq2} can be shown by a similar approach as in the proof of Theorem \ref{PROPCIDETER}. By Lemma \ref{appnumtoL2}, there exists a function $\phi_n\in H^\nu(\Omega)$ satisfying $\|\phi_n\|_{H^\nu(\Omega)} = 1$ such that
\begin{align*}
c_1n^{-\frac{2\nu}{d}+1} & \leq \|\phi_n - \mathcal{I}_{\Psi,X}\phi_n\|_{L_\infty(\Omega)}^2.
\end{align*}
By Lemma \ref{Th:Matern}, we have 
\begin{align*}
\hat c_{n,\beta}(x)^2 & \leq C_1 \|\phi_n\|_{\mathcal{N}_{\Psi}(\Omega)}^2 n^{-\frac{2\nu}{d}+1} \leq C_2 \|\phi_n\|_{H^\nu(\Omega)}^2 n^{-\frac{2\nu}{d} + 1} = C_2n^{-\frac{2\nu}{d}+1},
\end{align*}
for any $x\in \Omega$ and sufficiently large $n$ such that $n^{-\frac{\nu}{d} + \frac{1}{2}} \leq h_0$. Letting $f=a_n\phi_n$, we have
\begin{align*}
    \|(f - \hat f_n)/\hat c_{n,\beta}\|_{L_\infty(\Omega)}^2 & \geq C_3 a_n,
\end{align*}
which finishes the proof.

\section{Proof of Theorem \ref{propCIdeterrely1}}
By plugging $\hat c_{n,\beta}(x) = q_{1-\beta/2}\sqrt{Y^TR^{-1}Y}P_{\Psi,X}(x)$, it suffices to show that there exists $N>0$ such that for all $n>N$,
\begin{align*}
    \left|\frac{f(x) - \hat f_n(x)}{\sqrt{Y^TR^{-1}Y}P_{\Psi,X}(x)}\right| \leq C.
\end{align*}
By \eqref{firstestimate} and Lemma \ref{propsec32reli}, for sufficiently large $n$, it can be seen that 
\begin{align*}
    \left|\frac{f(x) - \hat f_n(x)}{\sqrt{Y^TR^{-1}Y}P_{\Psi,X}(x)}\right|^2 \leq &  \frac{\|f\|_{\mathcal{N}_\Psi(\Omega)}^2}{Y^TR^{-1}Y}=\frac{\|f\|_{\mathcal{N}_\Psi(\Omega)}^2}{\|f\|_{\mathcal{N}_\Psi(\Omega)}^2 - \|f-\mathcal{I}_{\Psi,X}f\|_{\mathcal{N}_{\Psi}(\Omega)}^2}\\
    \leq & \frac{\|f\|_{\mathcal{N}_\Psi(\Omega)}^2}{\|f\|_{\mathcal{N}_\Psi(\Omega)}^2 - C^2 P_{\Psi,X}^2 \|T^{-1} f\|_{L_2(\Omega)}^2}\\
    \leq & \frac{\|f\|_{\mathcal{N}_\Psi(\Omega)}^2}{\|f\|_{\mathcal{N}_\Psi(\Omega)}^2 - C_1 n^{-\frac{2\nu}{d}+1} \|T^{-1} f\|_{L_2(\Omega)}^2}\leq \frac{1}{2},
\end{align*}
where the first inequality is by \eqref{anindentity}, and the last inequality follows from $n^{-\frac{2\nu}{d}+1}$ converges to $0$. Then we finish the proof.

\section{Proof of Theorem \ref{NONOPTFHAT}}\label{secpfoftheomnonoptfhat}
Recall that in the stochastic case, we assume $x_1,...,x_n$ are drawn uniformly from $\Omega$. Before we show the proof of Theorem \ref{NONOPTFHAT}, we first present some lemmas used in this section. 
Note that the proof of Lemma \ref{lemmaefsmall} is based on Lemma 8.4 of \cite{geer2000empirical}; thus it is omitted here. Lemmas \ref{lemmavandegeerf} and \ref{thm:thm21inGeer2014} are Theorem 10.2 of \cite{geer2000empirical} and Theorem 2.1 of \cite{van2014uniform}, respectively.

\begin{lemma}\label{lemmaefsmall}
Suppose $\epsilon_1,...,\epsilon_n$ are independent and identically normally distributed variables. Then for all $t > C$, with probability at least $1 - C_1\exp(-C_2 t^2)$,
\begin{align*}
\sup_{g\in \mathcal{N}_{\Psi}(\Omega)}\frac{|\langle \epsilon, g \rangle_n|}{\|g\|_n^{1 - \frac{d}{2\nu}}\|g\|_{\mathcal{N}_{\Psi}(\Omega)}^{\frac{d}{2\nu}}} \leq tn^{-\frac{1}{2}}.
\end{align*}
\end{lemma}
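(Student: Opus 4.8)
The plan is to treat $\langle \epsilon, g\rangle_n$ as a Gaussian linear functional of $g$ and to bound its normalized supremum by a chaining argument combined with the peeling device, conditionally on the design $X$. First I would exploit the scale invariance of the target ratio: the map $g\mapsto |\langle\epsilon,g\rangle_n|/\big(\|g\|_n^{1-d/(2\nu)}\|g\|_{\mathcal{N}_\Psi(\Omega)}^{d/(2\nu)}\big)$ is homogeneous of degree zero in $g$ (numerator of degree one, denominator of degree $1-\frac{d}{2\nu}+\frac{d}{2\nu}=1$), so it suffices to bound $\sup\{|\langle\epsilon,g\rangle_n|/\|g\|_n^{1-d/(2\nu)} : \|g\|_{\mathcal{N}_\Psi(\Omega)}\le 1\}$ over the unit ball of the reproducing kernel Hilbert space. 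Conditional on $X$, the variable $\langle\epsilon,g\rangle_n=\frac1n\sum_k\epsilon_k g(x_k)$ is centered Gaussian with variance $(\sigma_\epsilon^2/n)\|g\|_n^2$, so the canonical pseudometric on the index set is $(\sigma_\epsilon/\sqrt n)\|\cdot\|_n$.

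The key quantitative input is the metric entropy of the unit ball of $\mathcal{N}_\Psi(\Omega)$. By Lemma \ref{coro1013}, under Condition \ref{C1} this space coincides with $H^\nu(\Omega)$, whose unit ball obeys the classical Sobolev entropy estimate $H(u, \{\,g:\|g\|_{\mathcal{N}_\Psi(\Omega)}\le 1\,\}, \|\cdot\|_n)\lesssim u^{-d/\nu}$, uniformly in the design (see \cite{edmunds2008function}). The embedding $\nu>d/2$ also yields $\|g\|_\infty\lesssim\|g\|_{\mathcal{N}_\Psi(\Omega)}$, which both bounds $\|g\|_n$ on the unit ball and lets me transfer the $L_2$-entropy estimate to the empirical norm.

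Next comes the peeling device. I would slice the unit ball into shells $S_j=\{g:\|g\|_{\mathcal{N}_\Psi(\Omega)}\le1,\ 2^{-j-1}<\|g\|_n\le2^{-j}\}$, $j\ge0$. On $S_j$ the denominator is $\asymp(2^{-j})^{1-d/(2\nu)}$, so it is enough to control $\sup_{g\in S_j}|\langle\epsilon,g\rangle_n|$. Dudley's entropy integral gives $\mathbb{E}\sup_{g\in S_j}|\langle\epsilon,g\rangle_n|\lesssim n^{-1/2}\int_0^{2^{-j}}u^{-d/(2\nu)}\,du\asymp n^{-1/2}(2^{-j})^{1-d/(2\nu)}$, where the integral converges precisely because $\nu>d/2$ forces $d/(2\nu)<1$; this is exactly where the exponent $1-\frac{d}{2\nu}$ on $\|g\|_n$ is produced. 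Applying the Borell--TIS concentration inequality on each shell, with variance proxy $(\sigma_\epsilon^2/n)(2^{-j})^2$ and deviation of order $t\,n^{-1/2}(2^{-j})^{1-d/(2\nu)}$, yields a per-shell tail of order $\exp(-c\,t^2\,2^{jd/\nu})$, valid once $t$ exceeds the absolute constant $C$ needed to dominate the mean term. Summing over $j\ge0$, the geometric factor $2^{jd/\nu}\ge1$ makes the series converge and delivers the overall bound $C_1\exp(-C_2t^2)$.

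The main obstacle, I expect, is the passage from the in-expectation chaining bound to the uniform-in-scale exponential tail: the peeling must be organized so that the per-shell concentration bounds sum to a single $\exp(-C_2t^2)$, and the degenerate regime $\|g\|_n\to0$ with $\|g\|_{\mathcal{N}_\Psi(\Omega)}=1$ must be absorbed (here the per-shell tails decay superexponentially in $j$, and the convention $0/0=0$ handles $\|g\|_n=0$). A secondary point is controlling the empirical entropy uniformly over the random design, which the uniform $L_2$-entropy bound for the Sobolev ball together with the sup-norm bound on the unit ball circumvents. All of these ingredients are precisely those packaged in Lemma 8.4 of \cite{geer2000empirical}, which I would ultimately invoke to conclude.
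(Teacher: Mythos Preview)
Your proposal is correct and is essentially the same approach as the paper's: the paper omits the proof and simply states that it follows from Lemma~8.4 of \cite{geer2000empirical}, and you unpack precisely the chaining-plus-peeling argument that underlies that lemma before invoking it yourself. The only difference is that you spell out the mechanics (homogeneity reduction to the RKHS unit ball, Sobolev entropy $H(u)\lesssim u^{-d/\nu}$, dyadic slicing in $\|g\|_n$, Dudley integral producing the exponent $1-\tfrac{d}{2\nu}$, and Borell--TIS concentration summed over shells), whereas the paper cites the packaged result directly.
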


\begin{lemma}\label{lemmavandegeerf}
Suppose $f\in H^\nu(\Omega)$ and $\hat \mu_n^{-1} = O_P(n^{-\frac{d}{2\nu +d }})$. Then we have
\begin{align*}
\|f - \hat f_n\|_n & = O_P(\hat \mu_n^{\frac{1}{2}}n^{-\frac{1}{2}} \vee n^{\frac{d-2\nu}{4\nu}}\hat \mu_n^{-\frac{d}{4\nu}}),\nonumber\\
\|\hat f_n\|_{\mathcal{N}_{\Psi}(\Omega)} & = O_P(1 \vee n^{\frac{d}{4\nu}}\hat \mu_n^{-\frac{2\nu + d}{4\nu}}),
\end{align*}
where $\hat f_n$ is defined as in \eqref{prehatsto}.
\end{lemma}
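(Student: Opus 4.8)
The plan is to recognize $\hat f_n$ as an RKHS-penalized least squares estimator and then to invoke the penalized empirical-process theory of \cite{geer2000empirical} essentially verbatim. First I would record the representer-theorem identity that connects kriging-with-nugget to regularized regression: minimizing $\sum_{k=1}^n (y_k - g(x_k))^2 + \hat\mu_n\|g\|_{\mathcal{N}_\Psi(\Omega)}^2$ over $g\in\mathcal{N}_\Psi(\Omega)$ has a minimizer of the form $\sum_j c_j\Psi(\cdot-x_j)$ with $c=(R+\hat\mu_n I_n)^{-1}Y$, whose value at $x$ is exactly $r(x)^T(R+\hat\mu_n I_n)^{-1}Y$. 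Hence, after dividing the objective by $n$,
\begin{align*}
\hat f_n = \argmin_{g\in \mathcal{N}_\Psi(\Omega)}\left\{\frac{1}{n}\sum_{k=1}^n (y_k - g(x_k))^2 + \lambda_n^2\,\|g\|_{\mathcal{N}_\Psi(\Omega)}^2\right\}, \qquad \lambda_n^2 = \frac{\hat\mu_n}{n},
\end{align*}
which is precisely the estimator treated in Theorem 10.2 of \cite{geer2000empirical}, with complexity functional $I(g)=\|g\|_{\mathcal{N}_\Psi(\Omega)}$ and smoothing parameter $\lambda_n$.

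Next I would verify the entropy hypothesis of that theorem. By Lemma \ref{coro1013}, $\mathcal{N}_\Psi(\Omega)$ coincides with $H^\nu(\Omega)$ with equivalent norms, so the unit ball of $I$ is the unit Sobolev ball, whose metric entropy obeys the classical bound
\begin{align*}
H\!\left(\delta,\{g:\|g\|_{\mathcal{N}_\Psi(\Omega)}\le 1\},\|\cdot\|_\infty\right)\le A\,\delta^{-d/\nu}
\end{align*}
for all $\delta>0$. In the notation of \cite{geer2000empirical} this is entropy of order $\delta^{-2\rho}$ with exponent $\rho=d/(2\nu)$. The condition $\nu>d/2$ from Condition \ref{C1} serves two purposes: it forces $\rho\in(0,1)$, the admissible range for the theorem, and it gives the embedding $H^\nu(\Omega)\hookrightarrow C(\Omega)$ that legitimizes working with the design-free sup-norm entropy. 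Together with the i.i.d.\ Gaussian---hence sub-Gaussian---errors $\epsilon_k$, all the hypotheses of Theorem 10.2 are in place.

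It then remains to read off the conclusion and translate notation. Theorem 10.2 gives, under the calibration $\lambda_n^{-1}=O_P(n^{1/(2+2\rho)})$,
\begin{align*}
\|f-\hat f_n\|_n = O_P\!\left(\lambda_n \vee n^{-1/2}\lambda_n^{-\rho}\right), \qquad \|\hat f_n\|_{\mathcal{N}_\Psi(\Omega)} = O_P\!\left(1 \vee n^{-1/2}\lambda_n^{-(1+\rho)}\right).
\end{align*}
Substituting $\lambda_n=(\hat\mu_n/n)^{1/2}$ and $\rho=d/(2\nu)$ reproduces the two displayed bounds, since $\lambda_n=\hat\mu_n^{1/2}n^{-1/2}$, $n^{-1/2}\lambda_n^{-\rho}=n^{(d-2\nu)/(4\nu)}\hat\mu_n^{-d/(4\nu)}$, and $n^{-1/2}\lambda_n^{-(1+\rho)}=n^{d/(4\nu)}\hat\mu_n^{-(2\nu+d)/(4\nu)}$; one also checks that the calibration $\lambda_n^{-1}=O_P(n^{1/(2+2\rho)})$ is exactly the stated requirement $\hat\mu_n^{-1}=O_P(n^{-d/(2\nu+d)})$ once $\lambda_n^2=\hat\mu_n/n$ and $\rho=d/(2\nu)$ are inserted.

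The step I expect to require the most care is not any single estimate but the faithful matching of conventions: confirming that the $1/n$ scaling of the empirical risk turns the nugget $\hat\mu_n$ into the squared smoothing parameter $\lambda_n^2=\hat\mu_n/n$, that the Sobolev entropy exponent is $\rho=d/(2\nu)$ rather than, say, $d/\nu$, and that van de Geer's side condition on $\lambda_n$ coincides with the hypothesis $\hat\mu_n^{-1}=O_P(n^{-d/(2\nu+d)})$. Since \cite{geer2000empirical} states Theorem 10.2 for a generic penalized least squares estimator with sub-Gaussian errors and a global-entropy setup, once these identifications are fixed the lemma follows without further probabilistic work; the displayed rates are simply the theorem's conclusion rewritten in the variable $\hat\mu_n$.
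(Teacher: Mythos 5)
Your proposal is correct and is essentially the paper's own route: the paper gives no separate proof but directly invokes Theorem 10.2 of \cite{geer2000empirical}, relying on the representer-theorem identification of $\hat f_n$ as the penalized least squares estimator (derived as \eqref{transOf2term}--\eqref{optGeGoal} in the proof of Lemma \ref{lemmaofnonpara}, and restated as \eqref{lemma32identity}) together with the norm equivalence $\mathcal{N}_\Psi(\Omega)=H^\nu(\Omega)$ of Lemma \ref{coro1013}. Your added bookkeeping --- the entropy exponent $\rho=d/(2\nu)\in(0,1)$ from $\nu>d/2$, the calibration $\lambda_n^2=\hat\mu_n/n$, and the check that $\hat\mu_n^{-1}=O_P(n^{-d/(2\nu+d)})$ is van de Geer's side condition $\lambda_n^{-1}=O_P(n^{1/(2+2\rho)})$ --- matches the intended reading exactly.
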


\begin{lemma}\label{thm:thm21inGeer2014}
Let $\mathcal{R}:=\sup_{f\in\mathcal{F}}\|f\|_{L_2(\Omega)}, K:=\sup_{f\in\mathcal{F}}\|f\|_{L_\infty(\Omega)},$ where $\mathcal{F}$ is a function class. Then for all $t>0$, with probability at least $1-\exp(-t)$,
\begin{align*}
\sup_{f\in\mathcal{F}}\bigg|\|f\|^2_n-\|f\|^2_{L_2(\Omega)}\bigg|\leq C_1\bigg(\frac{2\mathcal{R} J_\infty(K,\mathcal{F})+\mathcal{R}K\sqrt{t}}{\sqrt{n}}+\frac{4 J_\infty^2(K,\mathcal{F})+K^2t}{n}\bigg),
\end{align*}
where $C_1$ is a constant, and
\begin{align}\label{defJinfty}
J_\infty^2(z,\mathcal{F})=C_2^2\inf_{\delta>0}\mathbb{E}\bigg[z\int_\delta^1\sqrt{H(uz/2,\mathcal{F},\|\cdot\|_{L_\infty(\Omega)})}du+\sqrt{n}z\delta \bigg]^2,
\end{align}
with $C_2$ another constant.
\end{lemma}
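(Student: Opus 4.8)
The plan is to recognize the left-hand side as the supremum of a centered empirical process indexed by the squared class $\mathcal{G}:=\{f^2:f\in\mathcal{F}\}$, since $\|f\|_n^2-\|f\|_{L_2(\Omega)}^2=\frac1n\sum_{k=1}^n\big(f(x_k)^2-\mathbb{E} f(X)^2\big)$, where $X$ is uniform on $\Omega$ and the centering is valid because $\mathrm{Vol}(\Omega)=1$ gives $\mathbb{E}\|f\|_n^2=\|f\|_{L_2(\Omega)}^2$. I would then control this supremum in two stages: first bound its expectation by a chaining argument, and then add a concentration bound for its fluctuation around that expectation. The two stages are designed to produce, respectively, the entropy-driven terms $2\mathcal{R}J_\infty(K,\mathcal{F})/\sqrt n+4J_\infty^2(K,\mathcal{F})/n$ and the deviation terms $\mathcal{R}K\sqrt t/\sqrt n+K^2t/n$ in the asserted bound.

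For the expectation, I would first symmetrize, replacing $\mathbb{E}\sup_{f}\big|\frac1n\sum_k(f(x_k)^2-\mathbb{E} f^2)\big|$ by a constant multiple of the Rademacher average $\mathbb{E}\sup_f\big|\frac1n\sum_k\sigma_k f(x_k)^2\big|$. The key structural observation is that $t\mapsto t^2$ is $2K$-Lipschitz on $[-K,K]$ because $K=\sup_{f\in\mathcal{F}}\|f\|_{L_\infty(\Omega)}$; hence, by the Ledoux--Talagrand contraction principle, covering numbers of $\mathcal{G}$ in $\|\cdot\|_{L_\infty(\Omega)}$ are controlled by those of $\mathcal{F}$ at a rescaled radius, which is exactly the $uz/2$ argument (with $z=K$) appearing inside $H$ in \eqref{defJinfty}. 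Applying Dudley's entropy integral to the sub-Gaussian Rademacher process then yields the expected supremum in terms of the truncated integral $\int_\delta^1\sqrt{H(uK/2,\mathcal{F},\|\cdot\|_{L_\infty(\Omega)})}\,du$; the contributions from scales below $\delta$ are handled by the boundedness of $\mathcal{G}$ and are absorbed into the additive $\sqrt n\,z\delta$ term, after which optimizing over $\delta$ produces precisely the functional $J_\infty$. The radius $\mathcal{R}=\sup_f\|f\|_{L_2(\Omega)}$ enters as the localization scale of the process, giving the leading $\mathcal{R}J_\infty/\sqrt n$ term, while the quadratic $J_\infty^2/n$ contribution is the lower-order remainder from the sub-$\delta$ discretization.

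For the concentration stage, I would apply Talagrand's inequality (in Bousquet's form) to the supremum of the bounded empirical process over $\mathcal{G}$. The two required ingredients are the uniform envelope $\sup_{g\in\mathcal{G}}\|g\|_{L_\infty(\Omega)}\le K^2$ and the variance proxy $\sup_{f}\mathrm{Var}(f(X)^2)\le\mathbb{E} f(X)^4\le K^2\,\mathbb{E} f(X)^2\le K^2\mathcal{R}^2$. Feeding these into the inequality bounds the deviation above the mean by $C\big(\mathcal{R}K\sqrt t/\sqrt n+K^2t/n\big)$ with probability at least $1-\exp(-t)$, and combining this with the expectation bound from the previous paragraph yields the stated inequality with the displayed arrangement of terms.

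The main obstacle I expect is the careful treatment of the truncated entropy functional $J_\infty$: one must run the chaining only down to the cutoff scale $\delta$, bound the residual oscillation below $\delta$ using boundedness (the source of the additive $\sqrt n\,z\delta$ correction inside \eqref{defJinfty}), and verify that the empirical $L_\infty$-metric on $\mathcal{G}$ is dominated by the population one uniformly over the random design points $x_1,\dots,x_n$, so that the contraction step transfers the $L_\infty$-entropy of $\mathcal{F}$ cleanly. Tracking the explicit constants $2$ and $4$ through the symmetrization and chaining, and matching the variance and envelope constants inside Talagrand's inequality, is the routine but delicate bookkeeping that remains.
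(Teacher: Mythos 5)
Your outline is correct in substance, and it reconstructs essentially the argument behind this lemma: the paper itself supplies no proof---the statement is imported verbatim as Theorem 2.1 of \cite{van2014uniform}---and that reference proves it exactly along your lines, namely viewing $\|f\|_n^2-\|f\|_{L_2(\Omega)}^2$ as a centered empirical process over the squared class, bounding the mean by symmetrization plus truncated Dudley chaining (the cutoff $\delta$ producing the $\sqrt{n}\,z\delta$ term inside $J_\infty$), and adding Bousquet-form Talagrand concentration with envelope $K^2$ and variance proxy $K^2\mathcal{R}^2$ to generate the $\mathcal{R}K\sqrt{t}/\sqrt{n}+K^2t/n$ deviation terms. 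The one labeling slip worth fixing is that the entropy transfer $H\bigl(\epsilon,\{f^2:f\in\mathcal{F}\},\|\cdot\|_{L_\infty(\Omega)}\bigr)\leq H\bigl(\epsilon/(2K),\mathcal{F},\|\cdot\|_{L_\infty(\Omega)}\bigr)$ follows directly from the pointwise Lipschitz bound $\|f^2-g^2\|_{L_\infty(\Omega)}\leq 2K\|f-g\|_{L_\infty(\Omega)}$, not from the Ledoux--Talagrand contraction principle (which compares Rademacher averages rather than covering numbers); either tool closes the step, so this does not affect the validity of your plan.
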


The following lemma is a Bernstein-type inequality for a single function $g$. See, for example, \cite{massart2007concentration}.
\begin{lemma}\label{Berforsingleg}
Suppose $X_i$, $i=1, \ldots, n$ are uniformly distributed on $\Omega$. Let $Z_i=(\|g\|_{L_2(\Omega)}^2/\text{Vol}(\Omega)-g(X_i)^2)/\|g\|^2_{\mathcal{N}_{\Psi}(\Omega)}$ for a function $g\in \mathcal{N}_{\Psi}(\Omega)$. Suppose $|Z_i|\leq b$ for some constant $b>0$. For all $t>0$, we have
\begin{align*}
P\bigg(\frac{1}{n}\sum_{i=1}^n Z_i \geq t\bigg)\leq \exp\bigg[-\frac{nt^2/2}{E(Z_1^2)+bt/3}\bigg],
\end{align*}
which is the same as
\begin{align*}
P\bigg(\frac{\|g\|_{L_2(\Omega)}^2/\text{Vol}(\Omega)}{\|g\|^2_{\mathcal{N}_{\Psi}(\Omega)}}-\frac{\|g\|_n^2}{\|g\|^2_{\mathcal{N}_{\Psi}(\Omega)}} \geq t\bigg)\leq \exp\bigg[-\frac{nt^2/2}{E(Z_1^2)+bt/3}\bigg].
\end{align*}
\end{lemma}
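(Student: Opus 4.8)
The plan is to recognize that $Z_1,\dots,Z_n$ are i.i.d., bounded, and mean-zero, so that the assertion is precisely the classical Bernstein inequality applied to them. First I would verify the mean-zero property: since $X_1,\dots,X_n$ are i.i.d.\ uniform on $\Omega$, their common density is $1/\mathrm{Vol}(\Omega)$, so
\begin{align*}
\mathbb{E}[g(X_i)^2] = \frac{1}{\mathrm{Vol}(\Omega)}\int_\Omega g(x)^2\,dx = \frac{\|g\|_{L_2(\Omega)}^2}{\mathrm{Vol}(\Omega)}.
\end{align*}
Hence $\mathbb{E}[Z_i]=0$ for each $i$, and independence of the $X_i$ makes the $Z_i$ i.i.d.; by hypothesis $|Z_i|\le b$.

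Next I would run the standard Bernstein argument for i.i.d.\ mean-zero bounded variables. The key steps are: (i) the Chernoff bound, giving for every $\lambda>0$
\begin{align*}
P\Big(\tfrac{1}{n}\sum_{i=1}^n Z_i\ge t\Big)\le e^{-\lambda n t}\big(\mathbb{E}\, e^{\lambda Z_1}\big)^n;
\end{align*}
(ii) the moment generating function estimate, which uses $\mathbb{E}[Z_1]=0$ together with $|Z_1|\le b$ and the consequent bound $|\mathbb{E}[Z_1^k]|\le b^{k-2}\mathbb{E}[Z_1^2]$ for $k\ge 2$ to obtain
\begin{align*}
\mathbb{E}\, e^{\lambda Z_1}\le \exp\!\Big(\tfrac{\mathbb{E}[Z_1^2]}{b^2}\big(e^{\lambda b}-1-\lambda b\big)\Big);
\end{align*}
and (iii) optimizing the exponent over $\lambda>0$, which yields the Bennett bound $\exp(-\tfrac{n\mathbb{E}[Z_1^2]}{b^2}\,h(bt/\mathbb{E}[Z_1^2]))$ with $h(u)=(1+u)\log(1+u)-u$, followed by the elementary relaxation $h(u)\ge \tfrac{u^2}{2(1+u/3)}$ for $u\ge 0$. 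A short computation then produces exactly the stated denominator $\mathbb{E}[Z_1^2]+bt/3$. Since this is the standard statement recorded in \cite{massart2007concentration}, I would cite it directly rather than reproduce the optimization.

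Finally, the second displayed inequality is merely the first one rewritten. Expanding the average gives
\begin{align*}
\frac{1}{n}\sum_{i=1}^n Z_i = \frac{\|g\|_{L_2(\Omega)}^2/\mathrm{Vol}(\Omega)}{\|g\|_{\mathcal{N}_{\Psi}(\Omega)}^2} - \frac{1}{\|g\|_{\mathcal{N}_{\Psi}(\Omega)}^2}\cdot\frac{1}{n}\sum_{i=1}^n g(X_i)^2,
\end{align*}
and recognizing $\frac{1}{n}\sum_{i=1}^n g(X_i)^2=\|g\|_n^2$ from the definition of the empirical norm in Appendix A reproduces the second form verbatim. I expect no genuine obstacle here; the only points that warrant care are the mean-zero computation, which relies specifically on the uniform sampling, and the moment control in step (ii). Both are routine, and the result is a known inequality.
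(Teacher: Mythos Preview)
Your proposal is correct and matches the paper's approach: the paper simply states the lemma as a Bernstein-type inequality and cites \cite{massart2007concentration} without giving any further proof. Your verification that the $Z_i$ are i.i.d.\ mean-zero (using the uniform density $1/\mathrm{Vol}(\Omega)$) and your identification of the second display as a rewriting via $\|g\|_n^2=\tfrac1n\sum_i g(X_i)^2$ are exactly the points one needs to check before invoking the classical Bernstein bound, and in fact you supply more detail than the paper itself.
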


\begin{lemma}\label{lemmaratio1}
Assume for class $\mathcal{G}$, $\sup_{g\in \mathcal{G}}\|g\|_{L_\infty(\Omega)}\leq K < 1$, and the bracket entropy $H_B(\delta_n/\text{Vol}(\Omega),\mathcal{G},\|\cdot\|_{L_\infty(\Omega)})\leq \frac{n\delta_n^2}{1200\text{Vol}(\Omega)K^2}$, and $n\delta_n^2\rightarrow \infty$, where $\text{Vol}(\Omega)$ denotes the volume of $\Omega$ and $0 < \delta_n < 1$. Then we have
\begin{align*}
P\bigg(\inf_{\|g\|_{L_2(\Omega)} \geq 2\delta_n, g\in \mathcal{G}} \frac{\|g\|^2_n}{\|g\|_{L_2(\Omega)}^2}<\eta_1\bigg)\leq C_1\exp(-C_2n\delta_n^2/K^2),
\end{align*}
and
\begin{align*}
P\bigg(\sup_{\|g\|_{L_2(\Omega)} \geq 2\delta_n, g\in \mathcal{G}} \frac{\|g\|^2_n}{\|g\|_{L_2(\Omega)}^2}>\eta_2\bigg)\leq C_3\exp(-C_4n\delta_n^2/K^2),
\end{align*}
for some constants $\eta_1, \eta_2 > 0$ and $C_i$'s only depending on $\Omega$.
\end{lemma}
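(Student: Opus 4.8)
The plan is to use a peeling (slicing) argument over the $L_2(\Omega)$-norm, combined with the uniform concentration inequality of Lemma \ref{thm:thm21inGeer2014}. Following the Notation appendix I would assume $\text{Vol}(\Omega)=1$, so that $\mathbb{E}\|g\|_n^2=\|g\|_{L_2(\Omega)}^2$ for each fixed $g$ (because $x_1,\dots,x_n$ are uniform on $\Omega$). Both displayed inequalities then follow from a single two-sided statement: it suffices to show that, with the stated probability,
\[
\sup_{g\in\mathcal{G},\,\|g\|_{L_2(\Omega)}\geq 2\delta_n}\frac{\bigl|\,\|g\|_n^2-\|g\|_{L_2(\Omega)}^2\,\bigr|}{\|g\|_{L_2(\Omega)}^2}\leq \eta
\]
for a fixed $\eta\in(0,1)$; one then reads off the lower bound with $\eta_1=1-\eta$ and the upper bound with $\eta_2=1+\eta$.

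First I would set up the peeling. Partition $\{g\in\mathcal{G}:\|g\|_{L_2(\Omega)}\geq 2\delta_n\}$ into shells $A_s=\{g\in\mathcal{G}: 2^s\delta_n\leq \|g\|_{L_2(\Omega)}<2^{s+1}\delta_n\}$ for $s\geq 1$; since $\|g\|_{L_\infty(\Omega)}\leq K<1$ forces $\|g\|_{L_2(\Omega)}\leq K$, only finitely many shells (those with $s\lesssim \log_2(K/\delta_n)$) are nonempty. On $A_s$ I would apply Lemma \ref{thm:thm21inGeer2014} with $\mathcal{F}=A_s$, $\mathcal{R}=2^{s+1}\delta_n$ and the given $K$, using deviation parameter $t=t_s\asymp n\delta_n^2 2^{2s}/K^2$. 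Because $\|g\|_{L_2(\Omega)}^2\geq (2^s\delta_n)^2$ on $A_s$, it is enough to control the \emph{absolute} deviation $\sup_{g\in A_s}\bigl|\,\|g\|_n^2-\|g\|_{L_2(\Omega)}^2\,\bigr|$ by $\eta(2^s\delta_n)^2$ to obtain the shellwise relative bound.

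The crux is estimating the entropy integral $J_\infty(K,A_s)$ from \eqref{defJinfty}. Using $A_s\subseteq\mathcal{G}$ together with the elementary domination $H(\cdot,A_s,\|\cdot\|_{L_\infty(\Omega)})\leq H_B(\cdot,\mathcal{G},\|\cdot\|_{L_\infty(\Omega)})$ of ordinary entropy by bracket entropy, I would choose the free parameter $\delta\asymp\delta_n/K$ in \eqref{defJinfty} so that the entropy argument $uK/2$ stays above $\delta_n$ throughout the range of integration; the hypothesis $H_B(\delta_n,\mathcal{G},\|\cdot\|_{L_\infty(\Omega)})\leq n\delta_n^2/(1200K^2)$ then yields $J_\infty(K,A_s)\leq C\sqrt{n}\,\delta_n$, with the factor $1200$ being precisely what forces $C$ to be small. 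Substituting this and the choice of $t_s$ into Lemma \ref{thm:thm21inGeer2014}, each term of the deviation bound is a small multiple of $2^{2s}\delta_n^2$, giving the shellwise relative bound $\eta$, while the failure probability is $\exp(-t_s)\leq \exp(-C n\delta_n^2 2^{2s}/K^2)$. A union bound over the finitely many $s\geq 1$ is dominated by its $s=1$ term and produces the claimed $C_1\exp(-C_2 n\delta_n^2/K^2)$, with $n\delta_n^2\to\infty$ guaranteeing the bound is meaningful.

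I expect the main obstacle to be the entropy-integral step: one must calibrate the cut-off $\delta$, the $L_\infty(\Omega)$-radius $K$, and the bracket-entropy budget $n\delta_n^2/(1200K^2)$ so that (i) $J_\infty(K,A_s)$ is controlled uniformly across all shells by a single bound of order $\sqrt{n}\,\delta_n$ that does not deteriorate as $s$ grows, and (ii) after substitution the resulting numerical constants are genuinely below $1$, so that $\eta<1$ and the lower constant $\eta_1=1-\eta$ is strictly positive. Should the entropy-integral estimate prove too lossy on the coarsest shells, the single-function Bernstein bound of Lemma \ref{Berforsingleg} can be used there as a sharper substitute.
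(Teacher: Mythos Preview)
Your proposal takes a genuinely different route from the paper. The paper does \emph{not} invoke the uniform concentration inequality of Lemma~\ref{thm:thm21inGeer2014} at all; instead it works directly with the bracketing. Concretely, it slices linearly into $\{s\delta_n\le\|g\|_{L_2(\Omega)}\le (s{+}1)\delta_n\}$, picks bracket endpoints $g_L\le g\le g_U$ with $\|g_U-g_L\|_{L_\infty(\Omega)}\le \delta_n/\text{Vol}(\Omega)$, and shows that the event $\|g\|_n^2/\|g\|_{L_2(\Omega)}^2<\eta_1$ forces a one-sided deviation of order $(s-1)^2\delta_n^2$ for the \emph{fixed} function $g_L$. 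One then applies the single-function Bernstein inequality (Lemma~\ref{Berforsingleg}) to each bracket endpoint and takes a union bound over the $\exp\!\big(H_B(\delta_n/\text{Vol}(\Omega),\mathcal{G},\|\cdot\|_{L_\infty(\Omega)})\big)$ brackets and over $s$. The entropy hypothesis is used only to ensure this union bound is dominated by the Bernstein exponent.

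Your scheme is cleaner conceptually, but the obstacle you flag is real and is exactly what the paper's approach avoids. In the van de Geer bound the constants $C_1,C_2$ are not at your disposal, and with your choice $\delta\asymp\delta_n/K$ the term $\sqrt{n}K\delta$ in $J_\infty$ already contributes a fixed multiple of $\sqrt{n}\,\delta_n$, independent of the bracket-entropy budget; so on the smallest shells you cannot force the relative deviation below $1$ simply by tightening the factor $1200$. Your suggested fallback---use Bernstein on coarse shells---only works after first reducing to finitely many functions via bracketing, which is precisely the paper's argument carried out on every shell. In short: your plan can likely be made to work, but the cleanest fix is to abandon Lemma~\ref{thm:thm21inGeer2014} here and do bracketing plus Bernstein throughout, where all constants are explicit and the choice of a small $\eta_1>0$ is immediate.
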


\begin{lemma}\label{lemmaofnonpara}
For any $\mu \asymp n^{\alpha}$ with $0 \leq \alpha < 1$, with probability at least $1-C\exp(-n^{\eta})$,
\begin{align*}
\frac{\mu}{n}Y^T (R+\mu I_n)^{-1}Y \asymp \sigma_\epsilon^2,
\end{align*}
where $Y=(y_1,...,y_n)^T$ with $y_k$ defined in \eqref{recovering}, and $R$ is as in \eqref{mean}. Furthermore, if $\mu^{-1} = O_P(n^{-\frac{d}{2\nu +d }})$, then
\begin{align*}
\frac{\mu}{n}Y^T (R+\mu I_n)^{-1}Y \rightarrow \sigma_\epsilon^2
\end{align*}
in probability.
\end{lemma}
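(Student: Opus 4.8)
The plan is to write $Y = F + \epsilon$ with $F=(f(x_1),\dots,f(x_n))^T$ and $\epsilon=(\epsilon_1,\dots,\epsilon_n)^T$, and expand
\begin{align*}
\frac{\mu}{n}Y^T(R+\mu I_n)^{-1}Y = \frac{\mu}{n}F^T(R+\mu I_n)^{-1}F + \frac{2\mu}{n}F^T(R+\mu I_n)^{-1}\epsilon + \frac{\mu}{n}\epsilon^T(R+\mu I_n)^{-1}\epsilon,
\end{align*}
handling the three pieces in turn. The first (bias) piece is harmless: because $R$ is positive definite, $F^T(R+\mu I_n)^{-1}F\le F^TR^{-1}F=\|\mathcal{I}_{\Psi,X}f\|_{\mathcal{N}_\Psi(\Omega)}^2\le\|f\|_{\mathcal{N}_\Psi(\Omega)}^2$ by the identity \eqref{anindentity}, so it is at most $\frac{\mu}{n}\|f\|_{\mathcal{N}_\Psi(\Omega)}^2=O(n^{\alpha-1})$, which vanishes for fixed $f$ since $\alpha<1$. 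Writing $(R+\mu I_n)^{-1}=M$, the cross piece satisfies $|\frac{2\mu}{n}F^TM\epsilon|\le 2\sqrt{\frac{\mu}{n}F^TMF}\sqrt{\frac{\mu}{n}\epsilon^TM\epsilon}$ by Cauchy--Schwarz, so it is dominated by the geometric mean of the first and third pieces and also vanishes once the third is bounded. Everything thus reduces to the noise piece.

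I would diagonalize $R=U\Lambda U^T$ with $\Lambda=\mathrm{diag}(\lambda_1,\dots,\lambda_n)$ and observe that, conditionally on the design, $\tilde\epsilon:=U^T\epsilon\sim N(0,\sigma_\epsilon^2 I_n)$, whence
\begin{align*}
\frac{\mu}{n}\epsilon^T(R+\mu I_n)^{-1}\epsilon = \frac{1}{n}\sum_{i=1}^n\frac{\mu}{\lambda_i+\mu}\tilde\epsilon_i^2,
\end{align*}
a weighted sum of independent scaled $\chi^2_1$ variables with weights $w_i=\frac{\mu}{n(\lambda_i+\mu)}\in[0,1/n]$ and conditional mean $\sigma_\epsilon^2(1-\frac{1}{n}\mathrm{df}(\mu))$, where $\mathrm{df}(\mu)=\sum_i\frac{\lambda_i}{\lambda_i+\mu}$. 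The crux is to show $\mathrm{df}(\mu)=o(n)$, so that the mean is $\asymp\sigma_\epsilon^2$. For $\alpha>0$ this is elementary and deterministic: $\mathrm{df}(\mu)\le\mathrm{tr}(R)/\mu=n\Psi(0)/\mu=n^{1-\alpha}=o(n)$ since $\Psi(0)=1$. For the boundary case $\alpha=0$ (constant $\mu$) the trace bound is too weak, and I would instead invoke the algebraic spectral decay forced by Condition \ref{C1}: under uniform sampling the eigenvalues of $R$ concentrate around $n$ times those of the associated integral operator, which decay like $i^{-2\nu/d}$, giving $\mathrm{df}(\mu)\asymp n^{d/(2\nu)}=o(n)$ because $\nu>d/2$. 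A Bernstein-type inequality for the sub-exponential sum, using $\sum_i w_i^2\le \frac1n\sum_i w_i\le\frac1n$, then concentrates the noise piece about its conditional mean with probability at least $1-C\exp(-cn^\eta)$. Combining the three pieces gives $\frac{\mu}{n}Y^T(R+\mu I_n)^{-1}Y\asymp\sigma_\epsilon^2$ with the stated exponential probability.

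For the ``furthermore'' claim the hypothesis $\mu^{-1}=O_P(n^{-d/(2\nu+d)})$ activates Lemma \ref{lemmavandegeerf}, and I would organize the argument around the exact identity
\begin{align*}
\frac{\mu}{n}Y^T(R+\mu I_n)^{-1}Y = \frac{\mu}{n}\|\hat f_n\|_{\mathcal{N}_\Psi(\Omega)}^2 + \frac{1}{n}\sum_{k=1}^n\big(y_k-\hat f_n(x_k)\big)^2,
\end{align*}
which follows by setting $c=(R+\mu I_n)^{-1}Y$, writing $Y^T(R+\mu I_n)^{-1}Y=c^TRc+\mu c^Tc=\|\hat f_n\|_{\mathcal{N}_\Psi(\Omega)}^2+\mu\|c\|_2^2$, and noting that the vector of fitted residuals equals $Y-Rc=\mu c$. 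Lemma \ref{lemmavandegeerf} makes the first term $O_P$ of a vanishing power of $n$, while the residual term equals $\|\epsilon\|_n^2+2\langle\epsilon,f-\hat f_n\rangle_n+\|f-\hat f_n\|_n^2$, in which $\|\epsilon\|_n^2\to\sigma_\epsilon^2$ by the law of large numbers, $\|f-\hat f_n\|_n^2\to0$ by Lemma \ref{lemmavandegeerf}, and the cross term is controlled by Lemma \ref{lemmaefsmall}; this upgrades $\asymp$ to convergence in probability to $\sigma_\epsilon^2$.

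The main obstacle is the effective-dimension estimate $\mathrm{df}(\mu)=o(n)$ in the small-$\alpha$ regime, especially $\alpha=0$, where the deterministic trace bound degenerates and one must genuinely transfer the operator-level spectral decay of Condition \ref{C1} to the random kernel matrix $R$. This transfer, which underlies the lower bound on the factor $\frac1n\sum_i\frac{\mu}{\lambda_i+\mu}$, is exactly what the empirical-process tools in the appendix (the ratio Lemma \ref{lemmaratio1} and the uniform norm-equivalence Lemma \ref{thm:thm21inGeer2014}) are designed to supply under uniformly sampled designs.
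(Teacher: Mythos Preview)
Your proposal is correct, and for the ``furthermore'' statement it coincides with the paper's argument: both rest on the identity $\frac{\mu}{n}Y^T(R+\mu I_n)^{-1}Y=\|y-\hat f_n\|_n^2+\frac{\mu}{n}\|\hat f_n\|_{\mathcal{N}_\Psi(\Omega)}^2$ (the paper derives it as the value of a penalized minimum via the representer theorem, you via the residual identity $Y-Rc=\mu c$), and both then invoke Lemma~\ref{lemmavandegeerf} together with Lemma~\ref{lemmaefsmall}.

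For the two-sided $\asymp$ claim the routes genuinely differ. You split $Y=F+\epsilon$, diagonalize $R$, and reduce the noise piece to a weighted $\chi^2$ sum with conditional mean $\sigma_\epsilon^2\bigl(1-\frac{1}{n}\mathrm{df}(\mu)\bigr)$, then concentrate with Bernstein. The paper instead keeps the variational form throughout: for $\mu\lesssim n^{d/(2\nu+d)}$ it obtains the upper bound by monotonicity in the penalty (compare the objective to the one with $\mu$ replaced by $Cn^{d/(2\nu+d)}$, where the ``furthermore'' result already applies), and the lower bound by expanding the objective at the minimizer and using $\|f-\hat f_n\|_n^2+\frac{\mu}{n}\|\hat f_n\|_{\mathcal{N}_\Psi(\Omega)}^2\to 0$. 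This comparison trick never touches the spectrum of $R$, which is what makes it shorter.

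One remark on your route: the obstacle you flag at $\alpha=0$ is lighter than you suggest. For the $\asymp$ statement you do not need $\mathrm{df}(\mu)=o(n)$; it suffices that $\frac{1}{n}\mathrm{df}(\mu)$ stay bounded away from $1$. Since $x\mapsto x/(x+\mu)$ is concave on $[0,\infty)$, Jensen's inequality gives $\frac{1}{n}\mathrm{df}(\mu)\le \bar\lambda/(\bar\lambda+\mu)=1/(1+\mu)$ with $\bar\lambda=\frac{1}{n}\mathrm{tr}(R)=\Psi(0)=1$, and $\mu\asymp n^\alpha$ with $\alpha\ge 0$ forces $\mu$ to be bounded below by a positive constant. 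Hence the conditional mean of the noise piece is $\ge \sigma_\epsilon^2\cdot\mu/(1+\mu)\gtrsim\sigma_\epsilon^2$ \emph{deterministically}, and no eigenvalue transfer from the integral operator to $R$ is needed. The sharper estimate $\mathrm{df}(\mu)\asymp (n/\mu)^{d/(2\nu)}$ you sketch would only be required if one wanted convergence rather than two-sided bounds, which the first part of the lemma does not ask for.
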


\begin{lemma}\label{tracelemma}
Suppose $A,B$ and $C\in \RR^{n\times n}$ are positive definite matrices. We have
\begin{align*}
\text{tr} ((A + B)(A + B + C)^{-1}) \geq \text{tr} (A(A + C)^{-1}),
\end{align*}
and
\begin{align*}
\text{tr} ((A + B)^2(A + B + C)^{-2}) \geq \text{tr} (A^2(A + C)^{-2}).
\end{align*}
\end{lemma}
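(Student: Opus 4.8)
The plan is to read both inequalities as monotonicity statements in the Loewner order. Writing $X=A+B$ and using $B\succ 0$, we have $X\succ A$, so the two claims assert that $X\mapsto\operatorname{tr}(X(X+C)^{-1})$ and $X\mapsto\operatorname{tr}(X^2(X+C)^{-2})$ do not decrease as $X$ grows. For the first inequality I would use $X(X+C)^{-1}=I_n-C(X+C)^{-1}$, so that $\operatorname{tr}(X(X+C)^{-1})=n-\operatorname{tr}(C(X+C)^{-1})$, and it suffices to prove $\operatorname{tr}(C(A+C)^{-1})\ge\operatorname{tr}(C(A+B+C)^{-1})$. Since $A+B+C\succeq A+C$, operator antitonicity of matrix inversion gives $(A+C)^{-1}-(A+B+C)^{-1}\succeq 0$, and because $C\succ 0$ we get $\operatorname{tr}\!\big(C[(A+C)^{-1}-(A+B+C)^{-1}]\big)=\operatorname{tr}\!\big(C^{1/2}[(A+C)^{-1}-(A+B+C)^{-1}]C^{1/2}\big)\ge 0$. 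This disposes of the first inequality for arbitrary positive definite $A,B,C$.

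For a unified treatment I would pass to the eigenvalues of $X(X+C)^{-1}$. Conjugating by $(X+C)^{1/2}$ shows it is similar to the symmetric matrix $W(X):=(X+C)^{-1/2}X(X+C)^{-1/2}=I_n-(X+C)^{-1/2}C(X+C)^{-1/2}$, whose spectrum lies in $(0,1)$ because $0\prec X\prec X+C$. The eigenvalues of $(X+C)^{-1/2}C(X+C)^{-1/2}$ coincide with those of $C^{1/2}(X+C)^{-1}C^{1/2}$, and the latter is operator antitone in $X$ (compose the antitone map $X\mapsto(X+C)^{-1}$ with congruence by $C^{1/2}$); hence each of its eigenvalues decreases as $X$ increases, so each eigenvalue $\lambda_k$ of $W(X)$ increases in $X$. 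Summing recovers the first inequality, and applying the scalar map $t\mapsto t^2$, which is increasing on $(0,1)$, shows that $\sum_k\lambda_k(W(X))^2=\operatorname{tr}\!\big([X(X+C)^{-1}]^2\big)$ is nondecreasing in $X$ as well. This spectral argument uses no commutativity and is valid for arbitrary positive definite $C$.

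The step I expect to be the main obstacle is reconciling this spectral quantity with the matrix written in the second inequality. The trace $\operatorname{tr}(X^2(X+C)^{-2})$ equals $\|X(X+C)^{-1}\|_F^2=\sum_k\sigma_k\!\big(X(X+C)^{-1}\big)^2$, the sum of squared \emph{singular} values, whereas the monotonicity argument controls $\sum_k\lambda_k(W(X))^2$, the sum of squared \emph{eigenvalues}; the two coincide exactly when $X(X+C)^{-1}$ is normal, i.e.\ when $X$ and $C$ commute, which is the configuration encountered in the application where the regularizer acts as $C\propto I_n$. In the genuinely noncommuting regime singular values are not Loewner-monotone, so the Frobenius norm is not governed by the spectral comparison alone, and closing the gap would require a majorization or singular-value estimate rather than pure Loewner monotonicity. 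A complementary route is to differentiate $t\mapsto\operatorname{tr}\!\big((A+tB)^2(A+tB+C)^{-2}\big)$ and reduce nonnegativity of the derivative to the Loewner inequality $2C-S^{-1}C^2-C^2S^{-1}\succeq 0$ with $S=A+tB+C$; establishing this inequality—or isolating the structural hypothesis on $C$ under which it holds—is precisely where the difficulty in the second statement concentrates.
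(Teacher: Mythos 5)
Your proof of the first inequality is correct and is in substance the paper's own argument: the paper reduces the claim, via the resolvent identity $(A+C)^{-1}-(A+B+C)^{-1}=(A+B+C)^{-1}B(A+C)^{-1}$, to $\text{tr}\big(C(A+B+C)^{-1}B(A+C)^{-1}\big)\geq 0$, which is exactly your observation that this trace equals $\text{tr}\big(C^{1/2}[(A+C)^{-1}-(A+B+C)^{-1}]C^{1/2}\big)\geq 0$.

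For the second inequality you stop short of a proof, and your diagnosis of the obstruction is not only accurate but decisive: the second inequality is \emph{false} in the stated generality. Your derivative reduction is correct --- with $S=A+tB+C$ one computes $\frac{d}{dt}\,\text{tr}\big((A+tB)^2S^{-2}\big)=\text{tr}\big(BS^{-1}NS^{-1}\big)$ with $N=2C-S^{-1}C^2-C^2S^{-1}$ --- and the Loewner inequality $N\succeq 0$ genuinely fails off the commutative case. Take
\begin{align*}
C=\begin{pmatrix}1&0\\0&10^{-2}\end{pmatrix},\qquad S^{-1}=\begin{pmatrix}1/2&1\\1&50\end{pmatrix}.
\end{align*}
Then $C^{-1}-S^{-1}\succ 0$, so $S\succ C$ and $A:=S-C\succ 0$, while
\begin{align*}
N=\begin{pmatrix}1&-(1+10^{-4})\\-(1+10^{-4})&10^{-2}\end{pmatrix},\qquad u^TNu=-0.9902<0 \text{ for } u=(1,1)^T.
\end{align*}
Since $\text{tr}\big((Su)(Su)^TS^{-1}NS^{-1}\big)=u^TNu$, the choice $B=s\big((Su)(Su)^T+\delta I_2\big)$ with $s,\delta>0$ small gives a positive definite $B$ for which the trace strictly decreases. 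Concretely, here $\text{tr}\big(A^2(A+C)^{-2}\big)=\|I_2-S^{-1}C\|_F^2=1.5001$ exactly, and with $B=(Su)(Su)^T/100$ (plus $10^{-6}I_2$ for strict definiteness) one computes $\text{tr}\big((A+B)^2(A+B+C)^{-2}\big)\approx 1.4906<1.5001$. So the singular-value/eigenvalue gap you identified cannot be bridged: $\text{tr}(X^2(X+C)^{-2})$ is the squared Frobenius norm of $X(X+C)^{-1}$, governed by singular values, which are not Loewner-monotone; no argument can close the second inequality for arbitrary positive definite $C$ because it does not hold. The paper writes out only the first inequality and asserts the second ``can be proved similarly,'' which the above shows is unjustified --- the symmetric rearrangements that make the first proof work do not survive squaring.

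The lemma is, however, invoked only with $C=\hat\mu_n I_n$ (in the proof of Theorem \ref{NONOPTFHAT}), and there your spectral argument is a complete proof of both inequalities: $X(X+\mu I_n)^{-1}$ is symmetric, its singular values coincide with its eigenvalues $\lambda_k(X)/(\lambda_k(X)+\mu)$, and Weyl monotonicity of eigenvalues under $X\succeq A$ combined with the increasing scalar maps $t\mapsto t/(t+\mu)$ and $t\mapsto t^2/(t+\mu)^2$ gives both traces at once. The correct repair is thus to restate the lemma with $C=\mu I_n$ (or $C$ commuting with $A$ and $A+B$), under which your argument suffices and is cleaner than the paper's; your refusal to claim the general case is vindicated.
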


\begin{lemma}\label{lemmaerrorwnug}
Suppose $f \in \mathcal{N}_{\Psi}(\Omega)$ and $0 \leq\alpha < 1$. With probability at least $1 - C_1\exp( - C_2 n^{\eta_1})$, we have
\begin{align*}
    (f(x) - r(x)^T(R+ \hat \mu_n I_n)^{-1}f(X))^2 \leq (1 -  r(x)^T(R+ \hat \mu_n I_n)^{-1}r(x))\|f\|_{\mathcal{N}_{\Psi}(\Omega)}^2,
\end{align*}
and
\begin{align*}
    1- r(x)^T(R + \hat \mu_n I_n)^{-1}r(x) \lesssim n^{(\alpha - 1)(1 - \frac{d}{2\nu})},
\end{align*}
where $r(x)$ and $R$ are as in \eqref{mean}, $\hat \mu_n  \asymp n^{\alpha}$, and $f(X) = (f(x_1),...,f(x_n))^T$.
\end{lemma}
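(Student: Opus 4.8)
My plan is to prove the two inequalities separately, since the first is a deterministic identity while the second carries the probabilistic content coming from the random design. For the first inequality, write $A:=(R+\hat\mu_n I_n)^{-1}$ and $\lambda(x)=A r(x)$, so that the predictor is $\hat f_n(x)=\lambda(x)^T f(X)=\sum_j\lambda_j(x)f(x_j)$. Using the reproducing property in $\mathcal{N}_\Psi(\Omega)$, namely $g(x)=\langle g,\Psi(\cdot-x)\rangle_{\mathcal{N}_\Psi(\Omega)}$, I would write $f(x)-\hat f_n(x)=\langle f,\Psi(\cdot-x)-\sum_j\lambda_j(x)\Psi(\cdot-x_j)\rangle_{\mathcal{N}_\Psi(\Omega)}$ and apply Cauchy--Schwarz. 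It then remains to evaluate $\|\Psi(\cdot-x)-\sum_j\lambda_j\Psi(\cdot-x_j)\|_{\mathcal{N}_\Psi(\Omega)}^2=1-2\lambda^T r+\lambda^T R\lambda$ (using $\Psi(0)=1$). Substituting $\lambda=Ar$ and the algebraic identity $ARA=A-\hat\mu_n A^2$ gives $1-r^T A r-\hat\mu_n r^T A^2 r$, which is at most $1-r^T A r=1-r(x)^T(R+\hat\mu_n I_n)^{-1}r(x)$ because $\hat\mu_n\ge 0$ and $A^2\succeq 0$. This is exactly the claimed bound, and it holds for every $x$ and every fixed configuration of points.

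For the second inequality, the key tool is the variational characterization of the regularized power function: completing the square shows $1-r(x)^T(R+\hat\mu_n I_n)^{-1}r(x)=\min_{a\in\RR^n}\{\|\Psi(\cdot-x)-\sum_i a_i\Psi(\cdot-x_i)\|_{\mathcal{N}_\Psi(\Omega)}^2+\hat\mu_n\|a\|_2^2\}$, so \emph{any} admissible $a$ yields an upper bound. I would not use the interpolation weights $R^{-1}r$, whose $\ell_2$-norm is too large once $\hat\mu_n$ grows, but instead quadrature-type weights $a_i=w_\rho(x_i)/n$, where $w_\rho(y)=\rho^{-d}\phi((y-x)/\rho)$ is a mollifier centered at $x$ at scale $\rho\asymp(\hat\mu_n/n)^{1/(2\nu)}$ with $\mathcal{F}(\phi)(0)=1$. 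With this choice $\sum_i a_i\Psi(\cdot-x_i)$ approximates $\int_\Omega\Psi(\cdot-y)w_\rho(y)dy$, and working in the spectral domain, where $\|h\|_{\mathcal{N}_\Psi(\Omega)}^2\asymp\int f_\Psi(\omega)^{-1}|\mathcal{F}(h)(\omega)|^2 d\omega$ and $f_\Psi\asymp(1+\|\omega\|^2)^{-\nu}$ by Condition \ref{C1}, the continuous approximation error is $\asymp\rho^{2\nu-d}$ while $\hat\mu_n\|a\|_2^2\asymp(\hat\mu_n/n)\rho^{-d}$. Balancing the two at $\rho\asymp(\hat\mu_n/n)^{1/(2\nu)}$ yields $\asymp(\hat\mu_n/n)^{1-d/(2\nu)}\asymp n^{(\alpha-1)(1-d/(2\nu))}$, the target rate. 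Since $0\le\alpha<1$ and $\nu>d/2$, this scale satisfies $\rho\gtrsim n^{-1/d}\asymp h_{X,\Omega}$, so the quadrature stays on the correct side of the resolution of the data, which is what makes it valid.

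The main obstacle is to make this construction rigorous \emph{uniformly} over $x\in\Omega$ for the random uniform design; this is where the probability $1-C_1\exp(-C_2 n^{\eta_1})$ enters. Two discretization errors must be controlled: the gap between $\sum_i\frac{w_\rho(x_i)}{n}\Psi(\cdot-x_i)$ and its integral counterpart in the $\mathcal{N}_\Psi(\Omega)$-norm (which equals a computable quadratic form concentrating around the continuous value), and the gap between $\frac1n\sum_i w_\rho(x_i)^2$ and $\|w_\rho\|_{L_2(\Omega)}^2$. Both are empirical averages and concentrate via Bernstein-type bounds such as Lemma \ref{Berforsingleg}, but the delicate point is obtaining the estimate simultaneously for all centers $x$ (needed for the supremum over $\Omega$ used in Theorem \ref{thmnuggeterror}) and for boundary points, where the mollifier support must be kept inside $\Omega$; here a chaining/empirical-process argument in the spirit of Lemmas \ref{lemmaefsmall}--\ref{lemmaratio1}, together with the high-probability fill-distance bound $h_{X,\Omega}\lesssim(\log n/n)^{1/d}$ for uniform points (so that Lemma \ref{Th:Matern} and \eqref{firstestimate}--\eqref{power} apply on the good event), would be used. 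Finally the first inequality transfers this control of $1-r(x)^T(R+\hat\mu_n I_n)^{-1}r(x)$ directly into the stated prediction-error bound.
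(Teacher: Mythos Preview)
Your proof of the first inequality is correct and essentially the same as the paper's: both use Cauchy--Schwarz in the RKHS against the representer $\Psi(\cdot-x)-\sum_j\lambda_j\Psi(\cdot-x_j)$ and then observe that its squared norm $1-2\lambda^Tr+\lambda^TR\lambda$ is at most $1-r^TAr$ when $\lambda=Ar$. (The paper writes the same computation in the Fourier domain and adds the nonnegative term $\hat\mu_n\|u\|_2^2$ before specializing $u$; your identity $ARA=A-\hat\mu_n A^2$ is the algebraic counterpart.)

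For the second inequality your route is genuinely different from the paper's. The paper does \emph{not} build explicit test weights. It instead notes that $1-r(x)^T A r(x)=(g-f_1)(x)$ with $g(t)=\Psi(x-t)$ and $f_1=\argmin_{h\in\mathcal{N}_\Psi(\Omega)}\bigl\{\|g-h\|_n^2+\tfrac{\hat\mu_n}{n}\|h\|_{\mathcal{N}_\Psi(\Omega)}^2\bigr\}$, bounds the $L_\infty$ value by the Gagliardo--Nirenberg interpolation $\|g-f_1\|_{L_\infty}\le C\|g-f_1\|_{L_2}^{1-d/(2\nu)}\|g-f_1\|_{\mathcal{N}_\Psi}^{d/(2\nu)}$, uses $\|g-f_1\|_{\mathcal{N}_\Psi}^2\le 1-r^TAr$, and then invokes Lemma~\ref{lemmaratio1} to pass from $\|g-f_1\|_{L_2}^2$ to $\|g-f_1\|_n^2$, which in turn is bounded by $2\tfrac{\hat\mu_n}{n}(1-r^TAr)$ via \eqref{u2smallthan}. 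Solving the resulting self-referential inequality gives the stated rate. The payoff is that uniformity in $x$ and the boundary issues come for free: all the functions $g-f_1$ sit in a fixed Sobolev ball, so a single application of Lemma~\ref{lemmaratio1} handles every $x$ at once, and no mollifier near $\partial\Omega$ is needed.

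Your mollifier/quadrature construction is sound at the heuristic level and the scale $\rho\asymp(\hat\mu_n/n)^{1/(2\nu)}$ is exactly right, but the obstacles you flag are genuine and require nontrivial extra work (concentration of the quadratic form $a^TRa$ and of $\|a\|_2^2$ \emph{uniformly} in the center $x$, plus a boundary-adapted mollifier). These are avoidable: the paper's bootstrapping argument bypasses all three by reusing the interpolation inequality and Lemma~\ref{lemmaratio1}, at the price of a less explicit but much shorter proof. Your approach would buy a constructive bound that does not rely on Sobolev interpolation, potentially useful for kernels outside the algebraic-decay regime of Condition~\ref{C1}; for the present setting, however, the paper's route is cleaner.
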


We first state the intuition behind the proof. Direct calculation shows that
\begin{align*}
    & \mathbb{E}_\epsilon\|f-\hat f_n\|_{L_{2}(\Omega)}^2\nonumber\\ = & \mathbb{E}_\epsilon\int_\Omega (f(x) - r(x)^T(K+ \hat \mu_n I_n)^{-1}f(X) - r(x)^T(K+ \hat \mu_n I_n)^{-1}\epsilon)^2 dx\nonumber\\
= & \underbrace{\sigma_\epsilon^2 \int_\Omega r(x)^T(K+ \hat \mu_n I_n)^{-2}r(x)dx}_{variance} + \underbrace{\int_\Omega (f(x) - r(x)^T(K+ \hat \mu_n I_n)^{-1}f(X))^2 dx}_{bias}.
\end{align*}
If $\hat \mu_n$ is large ($\alpha > \frac{d}{2\nu+d}$), the bias dominates. Therefore, to obtain a lower bound of the mean-squared prediction error for the case $\alpha > \frac{d}{2\nu+d}$, we only need to obtain a lower bound of the bias term. On the other hand, if $\hat \mu_n$ is small ($\alpha < \frac{d}{2\nu+d}$), we only need to obtain a lower bound for the variance term.

Now we present the proof of Theorem \ref{NONOPTFHAT}. We first consider the case $\alpha > \frac{d}{2\nu+d}$.

By the proof of Lemma \ref{lemmaofnonpara}, it can be seen that 
\begin{align}\label{lemma32identity}
\hat f_n = \argmin_{g\in \mathcal{N}_{\Psi}(\Omega)}\bigg( \frac{1}{n}\sum_{i=1}^n(y_i-g(x_i))^2 + \frac{\hat \mu_n}{n} \|g\|^2_{\mathcal{N}_{\Psi}(\Omega)}\bigg).
\end{align}
In the rest of proof we will write $\hat f_n$ as $\hat f$ for simplification. Plugging \eqref{recovering} into the objective function of \eqref{lemma32identity}, we have
\begin{align}\label{lemma32rela}
 &\frac{1}{n}\sum_{i=1}^n(y_i-\hat f(x_i))^2 + \frac{\hat \mu_n}{n} \|\hat f\|^2_{\mathcal{N}_{\Psi}(\Omega)}\nonumber\\
= & \|f - \hat f\|_n^2 + 2\langle \epsilon, f - \hat f \rangle_n + \frac{1}{n}\sum_{i=1}^n \epsilon_i^2 + \frac{\hat \mu_n}{n} \|\hat f\|^2_{\mathcal{N}_{\Psi}(\Omega)}.
\end{align}
By Lemma \ref{lemmaefsmall}, 
\begin{align*}
|\langle \epsilon, f - \hat f \rangle_n| & \leq tn^{-\frac{1}{2}} \|f - \hat f\|_n^{1 - \frac{d}{2\nu}}\|f - \hat f\|_{\mathcal{N}_{\Psi}(\Omega)}^{\frac{d}{2\nu}},
\end{align*}
with probability at least $1 - C_1\exp(-C_2 t^2)$. By Lemma \ref{lemmavandegeerf} and the triangle inequality, $$\|f - \hat f\|_{\mathcal{N}_{\Psi}(\Omega)} \leq \|f\|_{\mathcal{N}_{\Psi}(\Omega)} +\|\hat f\|_{\mathcal{N}_{\Psi}(\Omega)} \leq C_3.$$
Therefore, \eqref{lemma32rela} can be lower bounded by
\begin{align}\label{lemma32relazai}
&\frac{1}{n}\sum_{i=1}^n(y_i-\hat f(x_i))^2 + \frac{\hat \mu_n}{n} \|\hat f\|^2_{\mathcal{N}_{\Psi}(\Omega)}\nonumber\\
\geq & \|f - \hat f\|_n^2 + \frac{1}{n}\sum_{i=1}^n \epsilon_i^2 + \frac{\hat \mu_n}{n} \|\hat f\|^2_{\mathcal{N}_{\Psi}(\Omega)} - 2C_3 tn^{-\frac{1}{2}} \|f - \hat f\|_n^{1 - \frac{d}{2\nu}}.
\end{align}
By Lemma \ref{lemmavandegeerf} and the interpolation inequality, we have $\|\hat f\|_{\mathcal{N}_{\Psi}(\Omega)} \leq C_4$, and $\|\hat f\|_{L_\infty(\Omega)} \leq c\|\hat f\|^{1-\frac{d}{2\nu}}_{L_2(\Omega)}\|\hat f\|^{\frac{d}{2\nu}}_{H^{\nu}(\Omega)}\leq c_1\|\hat f\|_{\mathcal{N}_{\Psi}(\Omega)}\leq (c_1\vee 1)C_4\leq  C_5$. 

Let $\mathcal{F} = H^{\nu}(C_5)$, where $H^{\nu}(C_5)$ denotes the ball in the Sobolev space $H^{\nu}(\Omega)$ with radius $C_5$. Thus, the bracket entropy number can be bounded by \citep{adams2003sobolev}
\begin{align*}
H_B(\delta_n,\mathcal{F},\|\cdot\|_{L_\infty(\Omega)})\leq C_6\bigg(\frac{1}{\delta_n}\bigg)^{d/\nu},
\end{align*}
and $\hat f \in \mathcal{F}$. Hence, $J_\infty^2(C_5,\mathcal{F})$ in \eqref{defJinfty} can be bounded by
\begin{align}\label{Jinf}
J_\infty^2(C_5,\mathcal{F}) & = C_7^2\inf_{\delta>0}E\bigg[C_5\int_\delta^1\sqrt{H(uC_5/2,\mathcal{F},\|\cdot\|_{L_\infty(\Omega)})}du+\sqrt{n}z\delta \bigg]^2\nonumber\\
& \leq C_8^2 \bigg[C_5\int_0^1\bigg(\frac{1}{uC_5}\bigg)^{d/(2\nu)}du\bigg]^2\nonumber\\
& = C_9^2 C_5^2\bigg(\frac{1}{C_5}\bigg)^{d/\nu}\leq C_{10}.
\end{align}
By Lemma \ref{thm:thm21inGeer2014}, for all $t>0$, with probability at least $1-\exp(-t)$,
\begin{align*}
\sup_{f\in\mathcal{F}}\bigg|\|f\|^2_n-\|f\|^2_{L_2(\Omega)}\bigg|\leq C_{11}\bigg(\frac{2\mathcal{R} \sqrt{C_{10}}+\mathcal{R}C_5\sqrt{t}}{\sqrt{n}}+\frac{4 C_{10}+C_5^2t}{n}\bigg),
\end{align*}
where $\mathcal{R} = \sup_{f\in\mathcal{F}}\|f\|_{L_2(\Omega)}$. Choosing $t = n^{\eta}$, where $\eta = \left(\frac{(\alpha - 1)(2\nu + d)}{2\nu} + 1\right)/4$, for sufficient large $n$, we have that the right-hand side of \eqref{lemma32relazai} can be lower bounded by
\begin{align}\label{finalapprxlemma32lo}
& \|f - \hat f\|_n^2 + \frac{\hat \mu_n}{n} \|\hat f\|^2_{\mathcal{N}_{\Psi}(\Omega)} - 2C_3 n^\eta n^{-\frac{1}{2}} \|f - \hat f\|_n^{1 - \frac{d}{2\nu}} +\frac{1}{n}\sum_{i=1}^n \epsilon_i^2\nonumber\\
\geq & \|f - \hat f\|_{L_2(\Omega)}^2 + \frac{\hat \mu_n}{n} \|\hat f\|^2_{\mathcal{N}_{\Psi}(\Omega)}\nonumber\\
&- C_{12} n^{(\eta-1)/2}R - C_{12} n^{\eta-1} - C_{13} n^\eta n^{-\frac{1}{2}} \|f - \hat f\|_{L_2(\Omega)}^{1 - \frac{d}{2\nu}} \nonumber\\
& - C_{13} n^\eta n^{-\frac{1}{2}} n^{(\eta - 1)\big(1 - \frac{d}{2\nu}\big)/4}R^{\big(1 - \frac{d}{2\nu}\big)} - C_{13}n^\eta n^{-\frac{1}{2}} n^{(\eta - 1)\big(1 - \frac{d}{2\nu}\big)/2}  +\frac{1}{n}\sum_{i=1}^n \epsilon_i^2,
\end{align}
where we also apply Jensen's inequality.

\noindent\textbf{Case 1:} If $2C_3 n^\eta n^{-\frac{1}{2}} \|f - \hat f\|_{L_2(\Omega)}^{1 - \frac{d}{2\nu}} \leq \|f - \hat f\|_{L_2(\Omega)}^2$, then we have $\|f - \hat f\|_{L_2(\Omega)} \gtrsim n^{(2\eta - 1)\frac{\nu}{2\nu + d}}$.

\noindent\textbf{Case 2:} If $2C_3 n^\eta n^{-\frac{1}{2}} \|f - \hat f\|_{L_2(\Omega)}^{1 - \frac{d}{2\nu}} > \|f - \hat f\|_{L_2(\Omega)}^2$, we have $\|f - \hat f\|_{L_2(\Omega)} < C_{14} n^{(2\eta - 1)\frac{\nu}{2\nu + d}}$. Consider function class $\mathcal{G} = \{g:\|g\|_{L_2(\Omega)} \leq C_{14} n^{(2\eta - 1)\frac{\nu}{2\nu + d}}\}\cap \mathcal{F}$, we have $f - \hat f \in \mathcal{G}$ and $R_G = \sup_{g\in\mathcal{G}}\|g\|_{L_2(\Omega)} \leq C_{14} n^{(2\eta - 1)\frac{\nu}{2\nu + d}}$. By \eqref{finalapprxlemma32lo}, we have for sufficient large $n$,
\begin{align}\label{finalapprxlemma32lo2}
& \|f - \hat f\|_n^2 + \frac{\hat \mu_n}{n} \|\hat f\|^2_{\mathcal{N}_{\Psi}(\Omega)} - 2C_3 n^\eta n^{-\frac{1}{2}} \|f - \hat f\|_n^{1 - \frac{d}{2\nu}}\nonumber\\
\geq & \|f - \hat f\|_{L_2(\Omega)}^2 + \frac{\hat \mu_n}{n} \|\hat f\|^2_{\mathcal{N}_{\Psi}(\Omega)} - C_{12} n^{(\eta-1)/2}R_G - C_{12} n^{\eta-1} - C_{13} n^\eta n^{-\frac{1}{2}} R_G^{1 - \frac{d}{2\nu}} \nonumber\\
& - C_{13} n^\eta n^{-\frac{1}{2}} n^{(\eta - 1)\big(1 - \frac{d}{2\nu}\big)/4}R_G^{\big(1 - \frac{d}{2\nu}\big)} - C_{13}n^\eta n^{-\frac{1}{2}} n^{(\eta - 1)\big(1 - \frac{d}{2\nu}\big)/2} \nonumber\\
\geq & \|f - \hat f\|_{L_2(\Omega)}^2 + \frac{\hat \mu_n}{n} \|\hat f\|^2_{\mathcal{N}_{\Psi}(\Omega)} - C_{15}n^\eta n^{-\frac{1}{2}} n^{(2\eta - 1)\frac{\nu}{2\nu + d}(1 - \frac{d}{2\nu})}.
\end{align}

Let 
\begin{align}\label{lemma32pff1def}
f_1 = \argmin_{g\in \mathcal{N}_{\Psi}(\Omega)} \|f - g\|_{L_2(\Omega)}^2 + \frac{\hat \mu_n}{2n} \|g\|^2_{\mathcal{N}_{\Psi}(\Omega)}.
\end{align}
Therefore, by \eqref{lemma32identity}, we have
\begin{align*}
& \|f - \hat f\|_n^2 + 2\langle \epsilon, f - \hat f \rangle_n + \frac{\hat \mu_n}{n} \|\hat f\|^2_{\mathcal{N}_{\Psi}(\Omega)}\nonumber\\
\leq & \|f - f_1\|_n^2 + 2\langle \epsilon, f - f_1 \rangle_n + \frac{\hat \mu_n}{n} \|f_1\|^2_{\mathcal{N}_{\Psi}(\Omega)},
\end{align*}
which, together with \eqref{lemma32relazai} and Lemma \ref{lemmaefsmall}, implies
\begin{align}\label{lemma32f1andfhat1}
&\|f - \hat f\|_n^2 + \frac{\hat \mu_n}{n} \|\hat f\|^2_{\mathcal{N}_{\Psi}(\Omega)} - 2C_3 n^{\eta} n^{-\frac{1}{2}} \|f - \hat f\|_n^{1 - \frac{d}{2\nu}}\nonumber\\
\leq &\|f - f_1\|_n^2 + 2C_{16} n^\eta n^{-\frac{1}{2}} \|f - f_1\|_n^{1 - \frac{d}{2\nu}} + \frac{\hat \mu_n}{n} \|f_1\|^2_{\mathcal{N}_{\Psi}(\Omega)}.
\end{align}
By \eqref{finalapprxlemma32lo2} and \eqref{lemma32f1andfhat1}, we have 
\begin{align}\label{lemma32f1andfhat11111}
&\|f - \hat f\|_{L_2(\Omega)}^2 + \frac{\hat \mu_n}{n} \|\hat f\|^2_{\mathcal{N}_{\Psi}(\Omega)} - C_{15}n^\eta n^{-\frac{1}{2}} n^{(2\eta - 1)\frac{\nu}{2\nu + d}(1 - \frac{d}{2\nu})}\nonumber\\
\leq &\|f - f_1\|_n^2 + 2C_{16} n^\eta n^{-\frac{1}{2}} \|f - f_1\|_n^{1 - \frac{d}{2\nu}} + \frac{\hat \mu_n}{n} \|f_1\|^2_{\mathcal{N}_{\Psi}(\Omega)},
\end{align}

By \eqref{lemma32pff1def}, we have 
\begin{align}\label{lemma32f1xiao2}
\|f -  f_1\|_{L_2(\Omega)}^2 + \frac{\hat \mu_n}{2n} \|f_1\|^2_{\mathcal{N}_{\Psi}(\Omega)} \leq  \frac{ \hat \mu_n}{2n} \|f\|^2_{\mathcal{N}_{\Psi}(\Omega)} \leq C_{17}\frac{ \hat \mu_n}{n} .
\end{align}
By Lemma \ref{Berforsingleg}, it can be shown that with probability at least $1 - \exp(-C_{18}n^{\eta})$, 
\begin{align}\label{finalapprxlemma32up}
& \|f - f_1\|_n^2 + 2C_{16} n^{\eta_1} n^{-\frac{1}{2}} \|f - f_1\|_n^{1 - \frac{d}{2\nu}} + \frac{\hat \mu_n}{n} \|f_1\|^2_{\mathcal{N}_{\Psi}(\Omega)}\nonumber\\
\leq & \|f - f_1\|_{L_2(\Omega)}^2 + \frac{\hat \mu_n}{n} \|f_1\|^2_{\mathcal{N}_{\Psi}(\Omega)}\nonumber\\
& + C_{17} n^\eta n^{-\frac{1}{2}} \|f - f_1\|_{L_2(\Omega)}^{1 - \frac{d}{2\nu}}  + C_{17} n^\eta n^{-\frac{1}{2}} n^{(\eta - 1)\big(1 - \frac{d}{2\nu}\big)/2} + C_{17} n^{\eta-1}\nonumber\\
\leq & \|f - f_1\|_{L_2(\Omega)}^2 + \frac{\hat \mu_n}{n} \|f_1\|^2_{\mathcal{N}_{\Psi}(\Omega)}\nonumber\\
& + C_{17} n^\eta n^{-\frac{1}{2}} \|f - f_1\|_{L_2(\Omega)}^{1 - \frac{d}{2\nu}}  + 2C_{17} n^\eta n^{-\frac{1}{2}} n^{(\eta - 1)\big(1 - \frac{d}{2\nu}\big)/2}.
\end{align}
Combining \eqref{lemma32f1andfhat11111} and \eqref{finalapprxlemma32up} yields
\begin{align}\label{lemma32f1andfhat2}
&\|f - \hat f\|_{L_2(\Omega)}^2 + \frac{\hat \mu_n}{n} \|\hat f\|^2_{\mathcal{N}_{\Psi}(\Omega)} - C_{15}n^\eta n^{-\frac{1}{2}} n^{(2\eta - 1)\frac{\nu}{2\nu + d}(1 - \frac{d}{2\nu})}\nonumber\\
\leq & \|f - f_1\|_{L_2(\Omega)}^2 + \frac{\hat \mu_n}{n} \|f_1\|^2_{\mathcal{N}_{\Psi}(\Omega)}\nonumber\\
& + C_{17} n^\eta n^{-\frac{1}{2}} \|f - f_1\|_{L_2(\Omega)}^{1 - \frac{d}{2\nu}} + 2C_{17} n^\eta n^{-\frac{1}{2}} n^{(\eta - 1)\big(1 - \frac{d}{2\nu}\big)/2}.
\end{align}
By \eqref{lemma32pff1def}, we have 
\begin{align*}
\|f -  f_1\|_{L_2(\Omega)}^2 + \frac{\hat \mu_n}{2n} \|f_1\|^2_{\mathcal{N}_{\Psi}(\Omega)} \leq  \|f -  \hat f\|_{L_2(\Omega)}^2 + \frac{\hat \mu_n}{2n} \|\hat f\|^2_{\mathcal{N}_{\Psi}(\Omega)},
\end{align*}
which implies
\begin{align}\label{lemma32f1xiao}
    2\|f -  f_1\|_{L_2(\Omega)}^2 + \frac{\hat \mu_n}{n} \|f_1\|^2_{\mathcal{N}_{\Psi}(\Omega)} \leq  2\|f -  \hat f\|_{L_2(\Omega)}^2 + \frac{\hat \mu_n}{n} \|\hat f\|^2_{\mathcal{N}_{\Psi}(\Omega)}.
\end{align}

Combining \eqref{lemma32f1xiao2}, \eqref{lemma32f1andfhat2} and \eqref{lemma32f1xiao}, we have for sufficient large $n$,
\begin{align}\label{lemma32f1andfhatfi}
\|f - \hat f\|_{L_2(\Omega)}^2 \geq & \|f - f_1\|_{L_2(\Omega)}^2 - C_{15}n^\eta n^{-\frac{1}{2}} n^{(2\eta - 1)\frac{\nu}{2\nu + d}(1 - \frac{d}{2\nu})} \nonumber\\
&  - C_{17} n^\eta n^{-\frac{1}{2}} \|f - f_1\|_{L_2(\Omega)}^{1 - \frac{d}{2\nu}}- 2C_{17} n^\eta n^{-\frac{1}{2}} n^{(\eta - 1)\big(1 - \frac{d}{2\nu}\big)/2}\nonumber\\
\geq &  \|f - f_1\|_{L_2(\Omega)}^2 - C_{15}n^\eta n^{-\frac{1}{2}} n^{(2\eta - 1)\frac{\nu}{2\nu + d}(1 - \frac{d}{2\nu})}\nonumber\\
&  - C_{17} n^\eta n^{-\frac{1}{2}} n^{\big(1 - \frac{d}{2\nu}\big)(\alpha - 1)/2} - 2C_{17} n^\eta n^{-\frac{1}{2}} n^{(\eta - 1)\big(1 - \frac{d}{2\nu}\big)/2}\nonumber\\
\geq &  \|f - f_1\|_{L_2(\Omega)}^2 - 4C_{15}n^\eta n^{-\frac{1}{2}} n^{(2\eta - 1)\frac{\nu}{2\nu + d}(1 - \frac{d}{2\nu})},
\end{align}
where the last inequality is because $(2\eta - 1)\frac{2\nu}{2\nu + d} > \alpha - 1$.
Let 
\begin{align}\label{lemma32pfftildedef}
\tilde f = \argmin_{g\in \mathcal{N}_{\Psi}(\RR^d)} \|f - g\|_{L_2(\RR^d)}^2 + \frac{\hat \mu_n}{C_{18} n} \|g\|^2_{\mathcal{N}_{\Psi}(\RR^d)},
\end{align}
where $C_{18}$ is a constant determined later. 
By \eqref{lemma32pff1def} and the definition of $\|\cdot \|^2_{\mathcal{N}_{\Psi}(\Omega)}$, we have
\begin{align}\label{lemma32pff1andf21}
 \|f - f_1\|_{L_2(\Omega)}^2 + \frac{\hat \mu_n}{2n} \|f_1\|^2_{\mathcal{N}_{\Psi}(\Omega)} & \leq  \|f - \tilde f\|_{L_2(\Omega)}^2 + \frac{\hat \mu_n}{2n} \|\tilde f\|^2_{\mathcal{N}_{\Psi}(\Omega)}\nonumber\\
 & \leq \|f - \tilde f\|_{L_2(\RR^d)}^2 + \frac{\hat \mu_n}{2n} \|\tilde f\|^2_{\mathcal{N}_{\Psi}(\RR^d)}.
\end{align}
By \eqref{lemma32pfftildedef} and the extension theorem (we still use $f_1$ to denote the extension of $f_1$ for notational simplicity), 
\begin{align}\label{lemma32pff1andf22}
& \|f - \tilde f\|_{L_2(\RR^d)}^2 + \frac{\hat \mu_n}{C_{18} n} \|\tilde f\|^2_{\mathcal{N}_{\Psi}(\RR^d)}\nonumber\\
\leq & \|f - f_1\|_{L_2(\RR^d)}^2 + \frac{\hat \mu_n}{C_{18} n} \|f_1\|^2_{\mathcal{N}_{\Psi}(\RR^d)}\nonumber\\
\leq & \frac{C_{19}}{C_{18}}(C_{18}\|f - f_1\|_{L_2(\Omega)}^2 + \frac{\hat \mu_n}{ n} \|f_1\|^2_{\mathcal{N}_{\Psi}(\Omega)})\nonumber\\
\leq & \frac{C_{19}}{C_{18}}\bigg((C_{18} - 1)\|f - f_1\|_{L_2(\Omega)}^2 + \|f - \tilde f\|_{L_2(\Omega)}^2 + \frac{\hat \mu_n}{n} \|\tilde f\|^2_{\mathcal{N}_{\Psi}(\RR^d)}\bigg),
\end{align}
where $C_{19} > 1$ is a constant. Therefore, combining \eqref{lemma32pff1andf21} and \eqref{lemma32pff1andf22} yields 
\begin{align}\label{lemma32f1andf2fi}
\|f - f_1\|_{L_2(\Omega)}^2 \geq \frac{1}{C_{18} - 1}\bigg((1 - \frac{C_{19}}{C_{18}}) \|f - \tilde f\|_{L_2(\RR^d)}^2 -  \frac{\hat \mu_n(C_{19} - 1)}{C_{18} n} \|\tilde f\|^2_{\mathcal{N}_{\Psi}(\RR^d)} \bigg)
\end{align}
Next, we calculate $\|f - \tilde f\|_{L_2(\RR^d)}^2$ and $\|\tilde f\|^2_{\mathcal{N}_{\Psi}(\RR^d)}$ with respect to $f$.

By Fourier transform and \eqref{lemma32pfftildedef},
\begin{align*}
& \|f-\tilde f\|_{L_2(\RR^d)}^2 + \frac{\hat \mu_n}{C_{18} n} \|\tilde f\|^2_{\mathcal{N}_{\Psi}(\RR^d)}\nonumber\\
= &  \int_{\RR^d}|\mathcal{F}(f)(\omega)-\mathcal{F}(\tilde f)(\omega)|^2 d\omega + \frac{\hat \mu_n}{C_{18} n} \int_{\RR^d}|\mathcal{F}(\tilde f)(\omega)|^2(1+|\omega|^2)^{\nu}d\omega\nonumber\\
= &  \int_{\RR^d}|\mathcal{F}(f)(\omega)-\mathcal{F}(\tilde f)(\omega)|^2 + \frac{\hat \mu_n}{C_{18} n} |\mathcal{F}(\tilde f)(\omega)|^2(1+|\omega|^2)^{\nu}d\omega\nonumber\\
= &  \int_{\RR^d}\frac{\frac{\hat \mu_n}{C_{18} n} (1+|\omega|^2)^{\nu}}{1+ \frac{\hat \mu_n}{C_{18} n} (1+|\omega|^2)^{\nu}}|\mathcal{F}(f)(\omega)|^2d\omega,
\end{align*}
where 
\begin{align*}
\mathcal{F}(\tilde f)(\omega) = \frac{\mathcal{F}(f)(\omega)}{1 + \frac{ \hat \mu_n}{C_{18} n} (1+|\omega|^2)^{\nu}}.
\end{align*}
Therefore,
\begin{align}\label{lemma32pfFtildeF2norm}
\int_{\RR^d}|\mathcal{F}(f)(\omega)-\mathcal{F}(\tilde f)(\omega)|^2 d\omega = \int_{\RR^d}|\mathcal{F}(f)(\omega)|^2 \frac{(\frac{ \hat \mu_n}{C_{18} n} (1+|\omega|^2)^{\nu})^2}{(1 + \frac{\hat \mu_n}{C_{18} n} (1+|\omega|^2)^{\nu})^2} d\omega, 
\end{align}
and
\begin{align}\label{lemma32pfFtildeFnanorm}
\frac{\hat \mu_n}{C_{18} n} \int_{\RR^d}|\mathcal{F}(\tilde f)(\omega)|^2(1+|\omega|^2)^{\nu}d\omega = \int_{\RR^d}|\mathcal{F}(f)(\omega)|^2 \frac{\frac{\hat \mu_n}{C_{18}n} (1+|\omega|^2)^{\nu}}{(1 + \frac{\hat \mu_n}{C_{18} n} (1+|\omega|^2)^{\nu})^2} d\omega.
\end{align}
Let $h(|\omega|) = \frac{\hat \mu_n}{C_{18}n} (1+|\omega|^2)^{\nu}$ and $C_{18} = C_{19}^2$. Plugging \eqref{lemma32pfFtildeF2norm} and \eqref{lemma32pfFtildeFnanorm} into \eqref{lemma32f1andf2fi}, we have
\begin{align}\label{lemma32f1lowerbd}
& \|f - f_1\|_{L_2(\Omega)}^2 \nonumber\\
\geq & \frac{1}{C_{18} - 1}\int_{\RR^d}\frac{|\mathcal{F}(f)(\omega)|^2h(|\omega|)}{(1 + h(|\omega|))^2}\bigg( \frac{C_{18} - C_{19}}{C_{18}} h(|\omega|) - (C_{19} - 1) \bigg) d\omega\nonumber\\
\geq &  \frac{1}{C_{19} + 1}\int_{\RR^d}\frac{|\mathcal{F}(f)(\omega)|^2h(|\omega|)}{(1 + h(|\omega|))^2}( h(|\omega|)/C_{19} - 1 ) d\omega\nonumber\\
= &   \frac{1}{C_{19} + 1}\int_{\{\omega:h(|\omega|) \leq 2C_{19}\}} \frac{|\mathcal{F}(f)(\omega)|^2h(|\omega|)}{(1 + h(|\omega|))^2}( h(|\omega|)/C_{19} - 1 ) d\omega \nonumber\\
& + \frac{1}{C_{19} + 1}\int_{\{\omega:h(|\omega|) > 2C_{19}\}} \frac{|\mathcal{F}(f)(\omega)|^2h(|\omega|)}{(1 + h(|\omega|))^2}( h(|\omega|)/C_{19} - 1 ) d\omega \nonumber\\
\geq &   \frac{1}{C_{19} + 1}\int_{\{\omega:h(|\omega|) \leq 2C_{19}\}} \frac{|\mathcal{F}(f)(\omega)|^2h(|\omega|)}{(1 + h(|\omega|))^2}( h(|\omega|)/C_{19} - 1 ) d\omega \nonumber\\
& + \frac{1}{4(C_{19} + 1)}\int_{\{\omega:h(|\omega|) > 2C_{19}\}} \frac{|\mathcal{F}(f)(\omega)|^2}{2C_{19}} d\omega\nonumber\\
\geq &   - \frac{1}{C_{19} + 1}\int_{\{\omega:h(|\omega|) \leq 2C_{19}\}} |\mathcal{F}(f)(\omega)|^2 d\omega\nonumber\\
& + \frac{1}{4(C_{19} + 1)}\int_{\{\omega:h(|\omega|) > 2C_{19}\}} \frac{|\mathcal{F}(f)(\omega)|^2}{2C_{19}} d\omega.
\end{align}
Now we can build our function $f$. Let 
\begin{align*}
\mathcal{F}(f)(\omega) = \left\{\begin{array}{cc}
    0 & h(|\omega|) \leq 2C_{19}-\delta_n, \\
    g_1(|\omega|) & 2C_{19}-\delta_n \leq h(|\omega|) \leq 2C_{19},\\
    (1 + |\omega|^2)^{-\nu/2 - d/2 - 1} & h(|\omega|) > 2C_{19},
\end{array}\right.
\end{align*}
where $g_1$ and $\delta_n$ are chosen such that $f$ is continuous and 
\begin{align*}
 & \frac{1}{C_{19} + 1}\int_{\{\omega:2C_{19}-\delta_n \leq h(|\omega|) \leq 2C_{19}\}} |\mathcal{F}(f)(|\omega|)|^2 d\omega\nonumber\\
 < & \frac{1}{8(C_{19} + 1)}\int_{\{\omega:h(|\omega|) > 2C_{19}\}} \frac{|\mathcal{F}(f)(|\omega|)|^2}{2C_{19}}.
\end{align*}
Then we normalize $f$ such that $\|f\|_{\mathcal{N}_{\Psi}(\RR^d)} = 1$. Therefore, by \eqref{lemma32f1lowerbd}, direct calculation shows that 
\begin{align*}
\|f - f_1\|_{L_2(\Omega)}^2 \geq C_{20}\frac{\hat \mu_n}{n}.
\end{align*}
By \eqref{lemma32f1andfhatfi} and $(2\eta - 1)\frac{\nu}{2\nu + d}(1 - \frac{d}{2\nu}) + \eta - 1/2 < \alpha -1$, we have 
\begin{align*}
\|f - \hat f\|_{L_2(\Omega)}^2 \geq C_{21}\frac{\hat \mu_n}{n}.
\end{align*}
Note that $(2\eta - 1)\frac{2\nu}{2\nu + d} > \alpha - 1$, which leads to a contradiction of Case 2 as $n$ increases. 

Using this constructed $f$, it can be seen that 
\begin{align*}
    \mathbb{E}_\epsilon\|f-\hat f\|_{L_{2}(\Omega)}^2 \geq C_6 n^{-(1 - 2\eta)\frac{2\nu}{2\nu + d}}.
\end{align*}
Therefore, we finish the proof of the case $\alpha > \frac{d}{2\nu + d}$.

Next, we prove the case $\alpha<\frac{d}{2\nu + d}$. We consider the case $0\leq \alpha<\frac{d}{2\nu + d}$.

By Fubini's theorem,
\begin{align}\label{exerrorsum}
& \mathbb{E}\|f-\hat f_n\|_{L_{2}(\Omega)}^2\nonumber\\
= & \mathbb{E}\int_\Omega (f(x) - r(x)^T(R+ \hat \mu_n I_n)^{-1}f(X) - r(x)^T(R+ \hat \mu_n I_n)^{-1}\epsilon)^2 dx\nonumber\\
= & \sigma_\epsilon^2 \int_\Omega r(x)^T(R+ \hat \mu_n I_n)^{-2}r(x)dx + \int_\Omega (f(x) - r(x)^T(R+ \hat \mu_n I_n)^{-1}f(X))^2 dx\nonumber\\
\geq & \sigma_\epsilon^2 \int_\Omega r(x)^T(R+ \hat \mu_n I_n)^{-2}r(x)dx := \sigma_\epsilon^2 I.
\end{align}
We consider a discrete version of $I$. Let $I_n = \text{tr} (R^2 (R+ \hat \mu_n I_n)^{-2})$. Let $p = \lfloor(n/\hat \mu_n)^{d/(2\nu)}\rfloor$, where $\lfloor \cdot \rfloor$ is the floor function, and $p_1 = \min\{p, C_1 n^{1/2}\}$. Let $\Psi_1=\frac{1}{\sqrt{n}}(\varphi_1(X),...,\varphi_{p_1}(X))$, and $\Psi_2=\frac{1}{\sqrt{n}}(\varphi_{p_1+1}(X),\varphi_{p_1+2}(X),...)$, where $\varphi_k(X)=(\varphi_k(x_1),...,\varphi_k(x_n))^T$ for $k=1,2,...$, and $\varphi_{k}$'s are as in \eqref{eq:AppJ1eq1}. Let $\Lambda_1=\mbox{diag}(n\lambda_1,...,n\lambda_p)$ and $\Lambda_2=\mbox{diag}(n\lambda_{p+1},...)$, where $\lambda_k$'s are as in \eqref{eq:AppJ1eq1}. Therefore, $R=\sum_{k=1}^\infty \lambda_i\varphi_k(X)\varphi_k(X)^T=\Psi_1\Lambda_1\Psi_1^T+\Psi_2\Lambda_2\Psi_2^T$. 

By Lemma \ref{tracelemma},
\begin{align}\label{thm44lbfangcha}
I_n \geq & \text{tr}((\Psi_1\Lambda_1\Psi_1^T)^2(\Psi_1\Lambda_1\Psi_1^T + \hat \mu_n I)^{-2}) \nonumber\\
= & \sum_{i = 1}^{p_1} \bigg(\frac{\lambda_i(\Psi_1\Lambda_1\Psi_1^T)}{\lambda_i(\Psi_1\Lambda_1\Psi_1^T) + \hat \mu_n}\bigg)^2,
\end{align}
where $\lambda_i(\Psi_1\Lambda_1\Psi_1^T)$ denote the $i$-th eigenvalue of $\Psi_1\Lambda_1\Psi_1^T$. Note that the $i$-th eigenvalue $\lambda_i(\Psi_1\Lambda_1\Psi_1^T) = \lambda_i(\Psi_1^T\Psi_1\Lambda_1)$ for $i=1,...,p$,  because if $u_i$ is eigenvector corresponding to $i$-th eigenvalue of $\Psi_1\Lambda_1\Psi_1^T$, then 
\begin{align*}
\Psi_1\Lambda_1\Psi_1^Tu_i = \lambda_i u_i \Rightarrow  \Psi_1^T\Psi_1\Lambda_1\Psi_1^Tu_i = \lambda_i \Psi_1^Tu_i.
\end{align*}
Therefore, \eqref{thm44lbfangcha} implies
\begin{align*}
I_n \geq & \text{tr}((\Lambda_1\Psi_1^T\Psi_1)^2(\Lambda_1\Psi_1^T\Psi_1 + \hat \mu_n I)^{-2}) \nonumber\\
= & \text{tr}(\Lambda_1^2(\Lambda_1 + \hat \mu_n (\Psi_1^T\Psi_1)^{-1})^{-2}).
\end{align*}
By Lemma \ref{boundsofdeterminant}, it can be shown that $\lambda_{\min}(\Psi_1^T\Psi_1) \geq \eta_1$ with probability at least $1 - C_{22} \exp(-C_{23}n^{\eta_2})$, where the constant in the expression of $p_1$ is chosen such that the condition of Lemma \ref{lemmaratio1} is satisfied. Combining this with Lemma \ref{lemDecayEig}, we have $I_n \geq C_{24} p_1$.

Notice that for any $u = (u_1,...,u_n)^T \in \RR^n$,
\begin{align*}
1 - 2\sum_{i=1}^n u_i\Psi(x - x_i) + \sum_{i=1}^n\sum_{j=1}^n u_i u_j\Psi(x_i - x_j) + \hat \mu_n \|u\|_2^2 \geq \hat \mu_n \|u\|_2^2.
\end{align*}
Plugging $u = (R + \hat \mu_n I_n)^{-1}r(x)$, we have 
\begin{align}\label{u2smallthan}
    \hat \mu_nr(x)^T(R + \hat \mu_n I_n)^{-2}r(x) \leq 1 - r(x)^T(R + \hat \mu_n I_n)^{-1}r(x).
\end{align}
By Lemma \ref{lemmaerrorwnug}, with probability at least $1 - C_{25} \exp(-C_{26}n^{\eta_2})$, $ 1 - r(x)^T(R + \hat \mu_n I_n)^{-1}r(x) \leq C_{27} \left(\frac{\hat \mu_n}{n}\right)^{\frac{2\nu - d}{2\nu}}$.

Let $\mathcal{H}_1 = \{h|h(x)^2 =  r(x)^T(R + \hat \mu_n I_n)^{-2}r(x), \text{with } x_i \in \Omega, i=1,...,n\}$. Let $\mathcal{H} = \mathcal{H}_1\bigcap \{\|h\|_{L_\infty(\Omega)}^2 \leq C_{27} \left(\frac{\hat \mu_n}{n}\right)^{\frac{2\nu - d}{2\nu}}/\hat \mu_n\}$. It can be seen from \eqref{u2smallthan} that with probability at least $1 - C_{25} \exp(-C_{26}n^{\eta_3})$, $\mathcal{H}$ is true. It can be also seen that $\|h_1\|_n^2 = \frac{1}{n}\text{tr} (R^2 (R+ \hat \mu_n I_n)^{-2})$ for some $h_1(x) = r(x)^T(R+ \hat \mu_n I_n)^{-2}r(x) \in \mathcal{H}_1$.

By Lemma \ref{thm:thm21inGeer2014}, we have with another probability at least $1-\exp(-n^{\eta_3})$ with $\eta_3 = (1 - \alpha)\frac{2\nu - d}{2d}$,
\begin{align*}
\sup_{h\in\mathcal{H}_1}\bigg|\|h\|^2_n-\|h\|^2_{L_2(\Omega)}\bigg|\leq C_{7}n^{-\frac{1}{2} + \left((\alpha - 1)\frac{2\nu - d}{2\nu}\right)\left(2 - \frac{d}{2\nu}\right)},
\end{align*}
where $J_\infty^2(\sqrt{C_{27}} \left(\frac{\hat \mu_n}{n}\right)^{\frac{2\nu - d}{4\nu}}/\hat \mu_n^{1/2},\mathcal{F})$ can be calculate similarly.
Therefore, we have
\begin{align}\label{thm44pffhatfndiff}
\|h\|^2_{L_2(\Omega)} \geq \|h\|_n^2 -  C_{27}n^{-\frac{1}{2} + \left((\alpha - 1)\frac{2\nu - d}{2\nu}\right)\left(2 - \frac{d}{2\nu}\right)}.
\end{align}
Note 
\begin{align*}
& \|h\|_n^2 = I_n/n \geq C_4 p_1/n \geq C_8\min\{n^{-1 + (1 - \alpha)\frac{d}{2\nu}},n^{-1/2}\}\nonumber\\
> & n^{-\frac{1}{2} + \left((\alpha - 1)\frac{2\nu - d}{2\nu}\right)\left(2 - \frac{d}{2\nu}\right)}
\end{align*}
with probability at least $1 - C_{22} \exp(-C_{23}n^{\eta_2})$. Therefore, combining all probabilities together and by \eqref{thm44pffhatfndiff}, with probability at least $1 - C_{28} \exp(-C_{29}n^{\eta_4})$, we have $\|h\|^2_{L_2(\Omega)} \geq  C_{30} p_1/n$, which finishes the proof of the case $\alpha \in [0,\frac{d}{2\nu +d})$. If $\alpha < 0$, then from \eqref{exerrorsum} it can be seen that the error is larger than choosing $\alpha = 0$. Thus, we complete the proof.

\section{Proof of Theorem \ref{COROCISTO}}

In the proof of Theorem \ref{COROCISTO}, we hide all the probabilities with the form $1-C\exp(-C'n^{-\eta})$ for the conciseness of the proof.

\noindent\textbf{Case 1: $\alpha > \frac{d}{2\nu + d}$.}

By \eqref{exerrorsum} and Lemma \ref{lemmaofnonpara}, we have 
\begin{align*}
& \mathbb{E}_\epsilon\|(f-\hat f_n)/\tilde c_{n,\beta}(\cdot;\hat \mu_n)\|_{L_{2}(\Omega)}^2 \geq C_1\frac{\mathbb{E}_\epsilon\|f-\hat f_n\|_{L_{2}(\Omega)}^2}{\mathbb{E}_\epsilon\|\tilde c_{n,\beta}(\cdot;\hat \mu_n)\|_{L_{2}(\Omega)}^2}\nonumber\\
\geq & C_2 \frac{\mathbb{E}_\epsilon\int_\Omega \hat \mu_n(f(x) - r(x)^T(R+ \hat \mu_n I_n)^{-1}f(X) - r(x)^T(R+ \hat \mu_n I_n)^{-1}\epsilon)^2dx}{\int_\Omega (1 - r(x)^T(R + \hat \mu_n I_n)^{-1}r(x))dx}\nonumber\\
= & C_2 \frac{\int_\Omega\hat \mu_n (r(x)^T(R+ \hat \mu_n I_n)^{-2}r(x) + (f(x) - r(x)^T(R+ \hat \mu_n I_n)^{-1}f(X))^2)dx}{\int_\Omega1 - r(x)^T(R + \hat \mu_n I_n)^{-1}r(x)dx}\nonumber\\
\geq&  C_3 n^{(1 - \alpha)(1 - \frac{d}{2\nu})} \hat\mu_n\int_\Omega (f(x) - r(x)^T(R+ \hat \mu_n I_n)^{-1}f(X))^2dx,
\end{align*}
where $r(x)$ and $R$ are as in \eqref{mean}, and $f(X) = (f(x_1),...,f(x_n))^T$. The first inequality is true because of the Cauchy-Schwarz inequality and that $f-\hat f_n$ is normal. The last inequality follows Lemma \ref{lemmaerrorwnug}. If $\alpha > \frac{d}{2\nu + d}$, then by the proof of Theorem \ref{NONOPTFHAT}, we have $n^{(1 - \alpha)(1 - \frac{d}{2\nu})} \hat\mu_n\int_\Omega (f(x) - r(x)^T(R+ \hat \mu_n I_n)^{-1}f(X))^2 \gtrsim n^{\beta}$ with $\beta > 0$ for $f \in \mathcal{N}_{\Psi}(\Omega)$ constructed in the proof of Theorem \ref{NONOPTFHAT}, which finishes the proof of Case 1.

\noindent\textbf{Case 2: $0 \leq \alpha < \frac{d}{2\nu + d}$.}

First, we prove \eqref{mainpropstoeqexp2}. By the proof of Theorem \ref{NONOPTFHAT} and Lemma \ref{lemmaofnonpara}, it can be shown that $$ \hat \mu_n\mathbb{E}_\epsilon\|\tilde c_{n,\beta}(\cdot;\hat \mu_n)\|_{L_{2}(\Omega)}^2 \gtrsim n^{(\alpha - 1)(1 - \frac{d}{2\nu})}.$$

Noting that $f-\hat f_n$ is normal, we have
\begin{align}\label{upEci1bias}
    & \hat \mu_n\mathbb{E}_\epsilon\|f-\hat f_n\|_{L_{2}(\Omega)}^2\nonumber\\
    \leq & C_1 \hat \mu_n\int_\Omega r(x)^T(R+ \hat \mu_n I_n)^{-2}r(x) + (f(x) - r(x)^T(R+ \hat \mu_n I_n)^{-1}f(X))^2 dx\nonumber\\
    \leq & 2C_1 \hat \mu_n \int_\Omega r(x)^T(R+ \hat \mu_n I_n)^{-2}r(x) + (f(x) - r(x)^TR^{-1}f(X))^2\nonumber\\
     & + ( r(x)^TR^{-1}f(X) - r(x)^T(R+ \hat \mu_n I_n)^{-1}f(X))^2 dx\nonumber\\
     \leq & 2C_1 \hat \mu_n \int_\Omega r(x)^T(R+ \hat \mu_n I_n)^{-2}r(x) + (1- r(x)^TR^{-1}r(X))\log n\nonumber\\
     & + ( r(x)^TR^{-1}f(X) - r(x)^T(R+ \hat \mu_n I_n)^{-1}f(X))^2 dx\nonumber\\
     \leq & 2C_1 \hat \mu_n \int_\Omega r(x)^T(R+ \hat \mu_n I_n)^{-2}r(x) + (1- r(x)^TR^{-1}r(x))\log n\nonumber\\
     & + ( r(x)^TR^{-1}f(X) - r(x)^T(R+ C_2 n^{\frac{d}{2\nu + d}} I_n)^{-1}f(X))^2 dx\nonumber\\
     \leq & 2C_1 \int_\Omega (1-r(x)^T(R+ \hat \mu_n I_n)^{-1}r(x)) + \hat \mu_n(1- r(x)^TR^{-1}r(x))\log n\nonumber\\
     & + \hat \mu_n( r(x)^TR^{-1}f(X) - r(x)^T(R+ C_2 n^{\frac{d}{2\nu + d}} I_n)^{-1}f(X))^2 dx,
\end{align}
where the first inequality is by Fubini's theorem, the second inequality is by the Cauchy-Schwarz inequality, the third inequality is by \eqref{firstestimate} and the fact $\|f\|^2_{\mathcal{N}_{\Psi}(\Omega)} \leq \log n$, the fourth inequality is by $\hat \mu_n\leq C_2 n^{\frac{d}{2\nu + d}}$, and the fifth inequality is by \eqref{u2smallthan}.
By Lemma \ref{Th:Matern} and Lemma \ref{lemmaerrorwnug}, \eqref{upEci1bias} can be further bounded by
\begin{align*}
     & \hat \mu_n\mathbb{E}_\epsilon\|f-\hat f_n\|_{L_{2}(\Omega)}^2\nonumber\\
     \leq & C_2 n^{(\alpha - 1)(1 - \frac{d}{2\nu})} + n^{\alpha-\frac{\nu}{d}+1/2}\log n\nonumber\\
     & + \hat \mu_n\int_\Omega( r(x)^TR^{-1}f(X) - r(x)^T(R+ C_2 n^{\frac{d}{2\nu + d}} I_n)^{-1}f(X))^2 dx.
\end{align*}
Let $f_1(x) =  (r(x)^TR^{-1}f(X) - r(x)^T(R+ C_2 n^{\frac{d}{2\nu + d}} I_n)^{-1}f(X))/\sqrt{\log n}$. Then $\|f_1\|^2_{\mathcal{N}_{\Psi}(\Omega)} \leq 1$, which implies either $\|f_1\|_{L_2(\Omega)}^2 \leq C_3 n^{-\frac{2\nu}{2\nu + d}}$ or the conditions of Lemma \ref{lemmaratio1} are satisfied. If the later happens, by Lemma \ref{lemmaratio1}, it can be shown that 
\begin{align*}
\|f_1\|_{L_2(\Omega)}^2 \leq & \eta_2 \|f_1\|_n^2\nonumber\\
\leq & \eta_2(\|r(x)^TR^{-1}f(X) - r(x)^T(R+ C_2 n^{\frac{d}{2\nu + d}} I_n)^{-1}f(X)\|_n^2\nonumber\\
& +C_2n^{-\frac{2\nu}{2\nu + d}}\| r(x)^T(R+ C_2 n^{\frac{d}{2\nu + d}} I_n)^{-1}f(X)\|^2_{\mathcal{N}_{\Psi}(\Omega)})\nonumber\\
\leq & C_4n^{-\frac{2\nu}{2\nu + d}}.
\end{align*}
By noticing that $n^{\alpha-\frac{\nu}{d}+1/2}\log n + n^{-\frac{2\nu}{2\nu + d}} n^\alpha\log n \leq C_5n^{(\alpha - 1)(1 - \frac{d}{2\nu})}$, we finish the proof of the first part.

For the second part, by the standard minimax theory in nonparametric regression, there exists a function $f$ such that $\mathbb{E}_\epsilon\|f-\hat f_n\|_{L_{2}(\Omega)}^2 \geq C_6 n^{-\frac{2\nu}{2\nu + d}}\|f\|^2_{\mathcal{N}_{\Psi}(\Omega)}$, which implies
\begin{align*}
& \mathbb{E}_\epsilon\|(f-\hat f_n)/\tilde c_{n,\beta}(\cdot;\hat \mu_n)\|_{L_{2}(\Omega)}^2 \geq C_7\frac{\mathbb{E}_\epsilon\|f-\hat f_n\|_{L_{2}(\Omega)}^2}{\mathbb{E}_\epsilon\|\tilde c_{n,\beta}(\cdot;\hat \mu_n)\|_{L_{2}(\Omega)}^2}\nonumber\\
\geq&  C_8 n^{(1 - \alpha)(1 - \frac{d}{2\nu})} \hat \mu_n\mathbb{E}_\epsilon\|f-\hat f_n\|_{L_{2}(\Omega)}^2\nonumber\\
\geq&  C_9 a_n.
\end{align*}

\noindent\textbf{Case 3: $\alpha < 0$.}

By Lemma \ref{lemmaofnonpara}, we have 
\begin{align*}
    & \mathbb{E}_\epsilon\|(f-\hat f_n)/\tilde c_n(\cdot,\beta; \hat \mu_n)\|_{L_{2}(\Omega)}^2\nonumber\\
    \leq & C_{10} \hat \mu_n\int_\Omega \frac{r(x)^T(R+ \hat \mu_n I_n)^{-2}r(x) + (f(x) - r(x)^T(R+ \hat \mu_n I_n)^{-1}f(X))^2}{(1 - r(x)^T(R + \hat \mu_n I_n)^{-1}r(x))} dx\nonumber\\
    \leq&  C_{10} + C_{10} \hat \mu_n\int_\Omega \frac{(f(x) - r(x)^T(R+ \hat \mu_n I_n)^{-1}f(X))^2}{(1 - r(x)^T(R + \hat \mu_n I_n)^{-1}r(x))} dx\nonumber\\
    \leq&  C_{10} + C_{10} \hat \mu_n\log n.
\end{align*}
The second inequality is true because of \eqref{u2smallthan}, and the third inequality is true because of Lemma \ref{lemmaerrorwnug}. Note $\hat \mu_n\log n\rightarrow 0$, which finishes the proof of the case $\alpha < 0$. 

\section{Proof of Lemma \ref{lemmaratio1}}\label{pflemmaratio1}
The idea of the proof is to use the bracket entropy number. By the definition of the bracket entropy number, we can find finite functions $g_s$'s such that the ball with small radius centered on $g_s$'s can cover the function class $\mathcal{G}$. By showing the results hold for these $g_s$'s, we can show the results hold for all function $g\in \mathcal{G}$.

Take $g\in \mathcal{G}$, and suppose that $s\delta_n\leq \|g\|_{L_2(\Omega)}\leq (s+1)\delta_n$, where $s\in\{2,3,...\}$. Let $-K\leq g_L \leq g \leq g_U \leq K$, and $\|g_U-g_L\|_{L_\infty(\Omega)}\leq \delta_n/\text{Vol}(\Omega)$, for functions $g_L$ and $g_U$. For $0 < C \leq \frac{1}{4\text{Vol}(\Omega)}$, by Cauchy-Schwarz inequality, we have
\begin{align*}
g_L^2 & \leq 2g^2/C+2C(g-g_L)^2 \leq 2g^2/C+2C\delta_n^2/\text{Vol}(\Omega)^2,
\end{align*}
which implies
\begin{align*}
2\|g\|_n^2 \geq C\|g_L\|^2_n-2C^2\delta_n^2/\text{Vol}(\Omega)^2.
\end{align*}
The inequality $\|g\|^2_n/\|g\|_{L_2(\Omega)}^2<\eta_1$ implies
\begin{align*}
& \|g_L\|^2_n-\|g_L\|^2_{L_2(\Omega)}/\text{Vol}(\Omega)\nonumber\\
\leq & 2\eta_1\|g\|_{L_2(\Omega)}^2/C-\|g_L\|_2^2/\text{Vol}(\Omega)+2C\delta_n^2/\text{Vol}(\Omega)^2\\
                        \leq & 2\eta_1 (s+1)^2\delta_n^2/C-(s-1)^2\delta_n^2/\text{Vol}(\Omega)+2C\delta_n^2/\text{Vol}(\Omega)^2\\
                        \leq & 2\eta_1 (s+1)^2\delta_n^2/C-(s-1)^2\delta_n^2/\text{Vol}(\Omega)+2C\delta_n^2/\text{Vol}(\Omega)^2.
\end{align*}
By choosing appropriate $C$ and $\eta_1$ (the choice only depends on $\text{Vol}(\Omega)$), we obtain
\begin{align}\label{lemmaA1eq1}
\|g_L\|^2_n-\|g_L\|^2_{L_2(\Omega)}/\text{Vol}(\Omega) & \leq -\frac{1}{2}(s-1)^2\delta_n^2/\text{Vol}(\Omega).
\end{align}
Note that
\begin{align}\label{lemmaA1eq2}
\bigg|\|g_L\|^2_n-\|g_L\|^2_{L_2(\Omega)}/\text{Vol}(\Omega)\bigg|\leq K^2
\end{align}
and
\begin{align}\label{lemmaA1eq3}
\mathbb{E}(g_L^2-\|g_L\|^2_{L_2(\Omega)}/\text{Vol}(\Omega))^2 & \leq 4K^2\|g_L\|^2_{L_2(\Omega)}/\text{Vol}(\Omega)\leq 4K^2(s+2)^2\delta^2_n/\text{Vol}(\Omega).
\end{align}
Combining \eqref{lemmaA1eq1}, \eqref{lemmaA1eq2} and \eqref{lemmaA1eq3} and Lemma \ref{Berforsingleg}, we have
\begin{align}\label{lemmaA1lastforGL}
& \mathbb{P}\bigg(\|g_L\|^2_{L_2(\Omega)}/\text{Vol}(\Omega)-\|g_L\|^2_n\ \geq \frac{1}{2\text{Vol}(\Omega)}(s-1)^2\delta^2_n\bigg) \nonumber\\
& \leq \exp\bigg[-\frac{n\frac{1}{8\text{Vol}(\Omega)^2}(s-1)^4\delta_n^4}{4K^2(s+2)^2\delta^2_n/\text{Vol}(\Omega)+K^2\frac{1}{6\text{Vol}(\Omega)}(s-1)^2\delta_n^2}\bigg]\nonumber\\
& \leq \exp\bigg[-\frac{n\frac{1}{8\text{Vol}(\Omega)}(s-1)^4\delta_n^4}{4K^2(s+2)^2\delta^2_n+K^2\frac{1}{6}(s-1)^2\delta_n^2}\bigg]\nonumber\\
& \leq \exp\bigg[-\frac{1}{8\text{Vol}(\Omega)}\frac{n(s-1)^2\delta_n^2}{36K^2+K^2\frac{1}{6}}\bigg]\nonumber\\
& \leq \exp\bigg[-\frac{1}{8\text{Vol}(\Omega)}\frac{n(s-1)^2\delta_n^2}{37K^2}\bigg]\nonumber\\
& \leq \exp\bigg[-\frac{n(s-1)^2\delta_n^2}{296\text{Vol}(\Omega)K^2}\bigg].
\end{align}
Therefore, taking all $g\in \mathcal{G}$ yields
\begin{align*}
& \mathbb{P}\bigg(\inf_{\|g\|_{L_2(\Omega)} \geq 2\delta_n} \frac{\|g\|^2_n}{\|g\|_{L_2(\Omega)}^2}<\eta_1\bigg) \nonumber\\
\leq & \sum_{s=2}^\infty \exp\bigg[H_B(\delta_n/\text{Vol}(\Omega),\mathcal{G}',\|\cdot\|_{L_\infty(\Omega)})-\frac{n(s-1)^2\delta_n^2}{300\text{Vol}(\Omega)K^2}\bigg].
\end{align*}
Since $H_B(\delta_n/\text{Vol}(\Omega),\mathcal{G}',\|\cdot\|_{L_\infty(\Omega)})\leq \frac{n\delta_n^2}{1200\text{Vol}(\Omega)K^2}$, it can be seen that
\begin{align*}
\mathbb{P}\bigg(\inf_{\|g\|_{L_2(\Omega)} \geq 2\delta_n} \frac{\|g\|^2_n}{\|g\|_{L_2(\Omega)}^2}<\eta_1\bigg)& \leq \sum_{s=2}^\infty \exp\bigg[-\frac{n(s-1)^2\delta_n^2}{1200\text{Vol}(\Omega)K^2}\bigg]\\
 & \leq C_1\exp(-C_2n\delta_n^2/K^2)
\end{align*}
for some constants $C_1$ and $C_2$ only related to $\text{Vol}(\Omega)$, which finishes the proof of the first part.

For $C_0 \leq \frac{1}{4\text{Vol}(\Omega)}$, it can be verified that
\begin{align*}
g^2 & \leq 2g_R^2/C_0+2C_0(g-g_R)^2 \leq 2g_R^2/C_0+2C_0\delta_n^2/\text{Vol}(\Omega)^2,
\end{align*}
which yields
\begin{align*}
\|g\|_n^2 \leq 2\|g_R\|^2_n/C_0+2C_0\delta_n^2/\text{Vol}(\Omega)^2.
\end{align*}
The inequality $\|g\|^2_n/\|g\|_{L_2(\Omega)}^2>\eta_2$ implies
\begin{align*}
\|g_R\|^2_n-\|g_R\|_{L_2(\Omega)}^2/\text{Vol}(\Omega) & \geq \frac{1}{2}\eta_2 C_0 s^2\delta_n^2-\|g_R\|_{L_2(\Omega)}^2/\text{Vol}(\Omega)-C^2_4\delta_n^2/\text{Vol}(\Omega)^2\\
& \geq \frac{1}{2}\eta_2 C_0 s^2\delta_n^2-(s-1)^2\delta^2_n/\text{Vol}(\Omega)-C^2_0\delta_n^2/\text{Vol}(\Omega)^2.
\end{align*}
By choosing appropriate $C_0$ and $\eta_2$, we have
\begin{align}\label{lemmaA1gUeq1}
\|g_R\|^2_n-\|g_R\|_{L_2(\Omega)}^2/\text{Vol}(\Omega) & \geq \frac{1}{4}(s-1)^2\delta_n^2/\text{Vol}(\Omega).
\end{align}
Note that
\begin{align}\label{lemmaA1gUeq2}
\bigg|\|g_R\|^2_n-\|g_R\|^2_{L_2(\Omega)}/\text{Vol}(\Omega)\bigg|\leq K^2
\end{align}
and
\begin{align}\label{lemmaA1gUeq3}
\mathbb{E}(g_R^2-\|g_R\|_{L_2(\Omega)}^2/\text{Vol}(\Omega))^2 & \leq 4K^2\|g_R\|_{L_2(\Omega)}^2/\text{Vol}(\Omega)\leq 4K^2(s+2)^2\delta^2_n/\text{Vol}(\Omega).
\end{align}
By combining \eqref{lemmaA1gUeq1}, \eqref{lemmaA1gUeq2} and \eqref{lemmaA1gUeq3} and Lemma \ref{Berforsingleg}, similar to \eqref{lemmaA1lastforGL}, we obtain
\begin{align*}
\mathbb{P}\bigg(\frac{\|g_R\|_{L_2(\Omega)}^2}{\text{Vol}(\Omega)}-\|g_R\|^2_n \geq \frac{1}{4\text{Vol}(\Omega)}(s-1)^2\delta_n^2\bigg)
& \leq \exp\bigg[-\frac{n(s-1)^2\delta_n^2}{600\text{Vol}(\Omega)K^2}\bigg].
\end{align*}
Taking all $g\in \mathcal{G}$ leads to
\begin{align*}
& \mathbb{P}\bigg(\inf_{\|g\|_{L_2(\Omega)} \geq 2\delta_n} \frac{\|g\|^2_n}{\|g\|_{L_2(\Omega)}^2}<\eta_2\bigg)\nonumber\\
\leq & \sum_{s=2}^\infty \exp\bigg[H_B(\delta_n/\text{Vol}(\Omega),\mathcal{G}',\|\cdot\|_{L_\infty(\Omega)})-\frac{n(s-1)^2\delta_n^2}{600\text{Vol}(\Omega)K^2}\bigg].
\end{align*}
Since $H_B(\delta_n/\text{Vol}(\Omega),\mathcal{G}',\|\cdot \|_{L_\infty(\Omega)})\leq \frac{n\delta_n^2}{1200\text{Vol}(\Omega)K^2}$, we have
\begin{align*}
\mathbb{P}\bigg(\inf_{\|g\|_{L_2(\Omega)} \geq 2\delta_n} \frac{\|g\|^2_n}{\|g\|_{L_2(\Omega)}^2}<\eta_2 \bigg) & \leq \sum_{s=2}^\infty \exp\bigg[-\frac{n(s-1)^2\delta_n^2}{1200\text{Vol}(\Omega)K^2}\bigg]\\
& \leq C_3\exp(-C_4\delta_n^2/K^2)
\end{align*}
for some constants $C_3$ and $C_4$ related to $\text{Vol}(\Omega)$, which finishes the proof of the second part.

\section{Proof of Lemma \ref{lemmaofnonpara}}\label{pflemmaD5}
Notice that
\begin{align}\label{transOf2term}
\frac{\mu}{n} Y^T(R + \mu I)^{-1}Y = \min_{\hat{Y}}\frac{1}{n}\sum_{i=1}^n (y_i-\hat{y}_i)^2+\frac{\mu}{n}\hat{Y}^T R^{-1}\hat{Y},
\end{align}
which can be verified by taking minimization of the objective function inside the right-hand side of (\ref{transOf2term}). Let $\hat{u}=R^{-1}\hat{Y}$. By plugging $\hat u$ into the right-hand side of (\ref{transOf2term}), we have
\begin{align}\label{eq:transu}
\min_{\hat{Y}}\frac{1}{n}\sum_{i=1}^n (y_i-\hat{y}_i)^2+\frac{\mu}{n}\hat{Y}^T R^{-1}\hat{Y} = \min_{\hat{u}} \frac{1}{n}(Y-R \hat{u})^T(Y-R\hat{u})+\frac{\mu}{n}\hat{u}^T R \hat{u}.
\end{align}
Therefore, by the representer theorem and \eqref{recovering}, the right-hand side of (\ref{eq:transu}) is the same as
\begin{align}\label{optGeGoal}
& \min_{\hat f\in \mathcal{N}_{\Psi}(\Omega)}\bigg( \frac{1}{n}\sum_{i=1}^n(y_i-\hat f(x_i))^2 + \frac{\mu}{n} \|\hat f\|^2_{\mathcal{N}_{\Psi}(\Omega)}\bigg).
\end{align}
Notice the objective function in \eqref{optGeGoal} can be written as 
\begin{align}\label{optGeGoalreX}
& \frac{1}{n}\sum_{i=1}^n(y_i-\hat f(x_i))^2 + \frac{\mu}{n} \|\hat f\|^2_{\mathcal{N}_{\Psi}(\Omega)}\nonumber\\
= & \frac{1}{n}\sum_{i=1}^n(f(x_i)-\hat f(x_i))^2 + \frac{\mu}{n} \|\hat f\|^2_{\mathcal{N}_{\Psi}(\Omega)} + \frac{2}{n}\sum_{i=1}^n \epsilon_i(f(x_i)-\hat f(x_i)) + \frac{1}{n}\sum_{i=1}^n \epsilon_i^2.
\end{align}
If $\mu^{-1} = O_P(n^{-\frac{d}{2\nu +d }})$, by Lemma \ref{lemmavandegeerf} and the proof of Lemma \ref{lemmavandegeerf}, it can be shown that \eqref{optGeGoalreX} converges to $\sigma_\epsilon^2$.

If $\mu = O_P(n^{\frac{d}{2\nu +d }})$, then for any $\hat f$, we have
\begin{align*}
\frac{1}{n}\sum_{i=1}^n(y_i-\hat f(x_i))^2 + \frac{\mu}{n} \|\hat f\|^2_{\mathcal{N}_{\Psi}(\Omega)}
\leq \frac{1}{n}\sum_{i=1}^n(y_i-\hat f(x_i))^2 + C n^{-\frac{2\nu}{2\nu +d}} \|\hat f\|^2_{\mathcal{N}_{\Psi}(\Omega)}.
\end{align*}
Applying the results in the case of $\mu^{-1} = O_P(n^{-\frac{d}{2\nu +d }})$, we obtain  
$$\frac{\mu}{n} Y^T(R + \mu I)^{-1}Y \leq 2\sigma_\epsilon^2,$$ with probability at least $1-C_1\exp(-C_2n^\eta)$. The lower bound can be obtained by
\begin{align*}
& \frac{1}{n}\sum_{i=1}^n(y_i-\hat f(x_i))^2 + \frac{\mu}{n} \|\hat f\|^2_{\mathcal{N}_{\Psi}(\Omega)}\nonumber\\
= & \frac{1}{n}\sum_{i=1}^n(f(x_i)-\hat f(x_i))^2 + \frac{\mu}{n} \|\hat f\|^2_{\mathcal{N}_{\Psi}(\Omega)} + \frac{2}{n}\sum_{i=1}^n \epsilon_i(f(x_i)-\hat f(x_i)) + \frac{1}{n}\sum_{i=1}^n \epsilon_i^2\nonumber\\
\geq & -\frac{1}{n}\sum_{i=1}^n(f(x_i)-\hat f(x_i))^2 + \frac{\mu}{n} \|\hat f\|^2_{\mathcal{N}_{\Psi}(\Omega)} + \frac{1}{2n}\sum_{i=1}^n \epsilon_i^2\nonumber\\
\geq & -\frac{1}{n}\sum_{i=1}^n(f(x_i)-\hat f(x_i))^2 - \frac{\mu}{n} \|\hat f\|^2_{\mathcal{N}_{\Psi}(\Omega)} + \frac{1}{2n}\sum_{i=1}^n \epsilon_i^2 \geq \sigma_\epsilon^2/4,
\end{align*}
with probability tending to one, where the first inequality is because of the Cauchy-Schwarz inequality. The last inequality is true because $\|f-\hat f\|_n^2$ and $\frac{\mu}{n} \|\hat f\|^2_{\mathcal{N}_{\Psi}(\Omega)}$ converge to zero \citep{geer2000empirical,gu2013smoothing}. This completes the proof.

\section{Proof of Lemma \ref{tracelemma}}\label{pflemmad6}
We only present the proof of the first inequality. The second inequality can be proved similarly. By direct calculation, it can be shown that
\begin{align*}
& \text{tr} ((A + B)(A + B + C)^{-1}) \geq \text{tr} (A(A + C)^{-1})\\
\Leftrightarrow &  \text{tr} (C(A + B + C)^{-1}) \leq \text{tr} (C(A + C)^{-1}) \\
\Leftrightarrow & \text{tr} (C (A + B + C)^{-1} B (A + C)^{-1}) \geq 0,
\end{align*}
which is true since  $A,B$ and $C$ are positive definite.

\section{Proof of Lemma \ref{lemmaerrorwnug}}\label{pflemma41}
Notice that at point $x$, for any $u = (u_1,...,u_n)^T \in \RR^n$,
\begin{align*}
    & (f(x) - \sum_{j=1}^n u_j f(x_j))^2\\
    = & \bigg|\frac{1}{(2\pi)^d} \int_{\RR^d} \bigg(\sum_{j=1}^n u_j e^{-ix_j^T\omega} - e^{-ix^T\omega}\bigg)\mathcal{F}(f)(\omega) d\omega  \bigg| ^2\\
    \leq & \frac{1}{(2\pi)^d}\int_{\RR^d} \bigg| \sum_{j=1}^n u_j e^{-ix_j^T\omega} - e^{-ix^T\omega}\bigg|^2 \mathcal{F}(\Psi)(\omega) d\omega  \frac{1}{(2\pi)^d}\int_{\RR^d} \frac{|\mathcal{F}(f)(\omega)|^2}{\mathcal{F}(\Psi)(\omega)} d\omega \\
    \leq & \bigg(\frac{1}{(2\pi)^d}\int_{\RR^d} \bigg| \sum_{j=1}^n u_j e^{-ix_j^T\omega} - e^{-ix^T\omega}\bigg|^2 \mathcal{F}(\Psi)(\omega) d\omega + \hat \mu_n \|u\|_2^2\bigg) \|f\|_{\mathcal{N}(\Omega)}^2\\
    = & \bigg(1 - 2\sum_{i=1}^n u_i\Psi(x - x_i) + \sum_{i=1}^n\sum_{j=1}^n u_i u_j\Psi(x_i - x_j) + \hat \mu_n \|u\|_2^2\bigg)\|f\|_{\mathcal{N}(\Omega)}^2,
\end{align*}
where the first inequality is because of the Cauchy-Schwarz inequality. Plugging $u = (R + \hat \mu_n I_n)^{-1}r(x)$ finishes the proof of the first part.

Now we prove the second part of this lemma.

Consider function $g(t) = \Psi(x - t)$. By the interpolation inequality, we have
\begin{align}\label{corub1}
& 1 - r(x)^T(R + \hat \mu_n I_n)^{-1}r(x) \leq  \|g(t) - r(x)^T(R + \hat \mu_n I_n)^{-1}r(t)\|_{L_\infty(\Omega)}\nonumber\\
\leq & C_1\|g(t) - r(x)^T(R + \hat \mu_n I_n)^{-1}r(t)\|_{L_2(\Omega)}^{1 - \frac{d}{2\nu}}\nonumber\\
& \times \|g(t) - r(x)^T(R + \hat \mu_n I_n)^{-1}r(t)\|_{\mathcal{N}_{\Psi}(\Omega)}^{ \frac{d}{2\nu}}.
\end{align}
By direct calculation, it can be seen that
\begin{align}\label{corub2}
\|g(t) - r(x)^T(R + \hat \mu_n I_n)^{-1}r(t)\|_{\mathcal{N}_{\Psi}(\Omega)}^2 \leq 1 - r(x)^T(R + \hat \mu_n I_n)^{-1}r(x).
\end{align}
Combining \eqref{corub1} and \eqref{corub2} leads to
\begin{align}\label{corub3}
1 - r(x)^T(R + \hat \mu_n I_n)^{-1}r(x) \leq & C_2 \|g(t) - r(x)^T(R + \hat \mu_n I_n)^{-1}r(t)\|_{L_2(\Omega)}^{\frac{4\nu - 2d}{4\nu - d}}.
\end{align}
Let $f_1(t) = r(x)^T(R + \hat \mu_n I_n)^{-1}r(t)$. It can be seen by the representer theorem that
\begin{align*}
f_1 = \argmin_{h \in \mathcal{N}_{\Psi}(\Omega)}\|g - h\|_n^2+\frac{\hat \mu_n}{n}\|h\|^2_{\mathcal{N}_{\Psi}(\Omega)}.
\end{align*}
Take $\delta_n = n^{-\frac{4\nu - d}{8\nu}}.$ Direct calculation shows that if $\alpha < \frac{1}{2}$, either $1 - r(x)^T(R + \hat \mu_n I_n)^{-1}r(x) \lesssim n^{(\alpha - 1)(1 - \frac{d}{2\nu})}$, or the conditions of Lemma \ref{lemmaratio1} hold. If $\alpha \geq \frac{1}{2}$, then either 
$\|g - f_1\|_2^2 \geq C_3 n^{2(\alpha-1)}$ for some constant $C_3$, which implies the conditions of Lemma \ref{lemmaratio1} hold, or 
\begin{align*}
    1 - r(x)^T(R + \hat \mu_n I_n)^{-1}r(x) \leq & C_2 \|g(t) - r(x)^T(R + \hat \mu_n I_n)^{-1}r(t)\|_2^{\frac{4\nu - 2d}{4\nu - d}}\\
    \leq & C_4 n^{(\alpha - 1) \frac{4\nu - 2d}{4\nu - d}}\lesssim n^{(\alpha - 1)(1 - \frac{d}{2\nu})}.
\end{align*}
Thus, combining these two cases, either the conditions of Lemma \ref{lemmaratio1} hold, or  $1 - r(x)^T(R + \hat \mu_n I_n)^{-1}r(x) \lesssim n^{(\alpha - 1)(1 - \frac{d}{2\nu})}$. If the conditions of Lemma \ref{lemmaratio1} holds, then with probability at least $1 - C_7\exp(-C_8n^{\eta_1})$,
\begin{align}\label{coroub51}
& \|g - f_1\|_{L_2(\Omega)}^2\nonumber\\
\leq & \eta_2 \|g - f_1\|_n^2\nonumber\\
= & \eta_2(\|g - f_1\|_n^2+\frac{\hat \mu_n}{n}\| f_1\|^2_{\mathcal{N}_{\Psi}(\Omega)}- \frac{\hat \mu_n}{n}\| f_1\|^2_{\mathcal{N}_{\Psi}(\Omega)})\nonumber\\
\leq & \eta_2\frac{\hat \mu_n}{n}(1 - r(x)^T(R + \hat \mu_n I_n)^{-1}R(R + \hat \mu_n I_n)^{-1}r(x))\nonumber\\
= & \eta_2\frac{\hat \mu_n}{n}(1- r(x)^T(R + \hat \mu_n I_n)^{-1}r(x)\nonumber\\
& + r(x)^T(R + \hat \mu_n I_n)^{-1}r(x)- r(x)^T(R + \hat \mu_n I_n)^{-1}R(R + \hat \mu_n I_n)^{-1}r(x))\nonumber\\
= & \eta_2\frac{\hat \mu_n}{n}(1- r(x)^T(R + \hat \mu_n I_n)^{-1}r(x) + \hat \mu_nr(x)^T(R + \hat \mu_n I_n)^{-2}r(x))\nonumber\\
\leq & 2\eta_2\frac{\hat \mu_n}{n}(1- r(x)^T(R + \hat \mu_n I_n)^{-1}r(x)),
\end{align}
where the last inequality is by \eqref{u2smallthan}. Plugging \eqref{coroub51} into \eqref{corub3} yields
\begin{align*}
    1- r(x)^T(R + \hat \mu_n I_n)^{-1}r(x) \lesssim n^{(\alpha - 1)(1 - \frac{d}{2\nu})},
\end{align*}
which finishes the proof. 

\section{Properties of eigenvalues}\label{propertiesofEigens}
Lemma \ref{lemDecayEig} states the asymptotic rate of the eigenvalues of $\Psi(\cdot-\cdot)$. Lemma \ref{boundsofdeterminant} states the minimum eigenvalue of $\Psi_1^T\Psi_1$, where $\Psi_1$ will be defined later.

Since $\Psi(\cdot-\cdot)$ is a positive definite function, by Mercer's theorem, there exists a countable set of positive eigenvalues $\lambda_1\geq \lambda_2\geq...>0$ and an orthonormal basis for $L_2(\Omega)$ $\{\varphi_k\}_{k\in\mathbb{N}}$ such that
\begin{align}\label{eq:AppJ1eq1}
\Psi(x - y) = \sum_{k=1}^\infty \lambda_k \varphi_k(x)\varphi_k(y),
\end{align}
where the summation is uniformly and absolutely convergent.

\begin{lemma}\label{lemDecayEig}
Let $\lambda_k$ be as in (\ref{eq:AppJ1eq1}). Then, $\lambda_k\asymp k^{-2\nu/d}$.
\end{lemma}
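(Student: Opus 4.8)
The plan is to identify the Mercer eigenvalues $\lambda_k$ with the squares of the singular values of the embedding of the native space into $L_2(\Omega)$, and then to read off their decay from the approximation-number bound already recorded in Lemma \ref{appnumtoL2}. Concretely, let $\mathrm{id}:\mathcal{N}_\Psi(\Omega)\to L_2(\Omega)$ be the natural embedding and let $T$ be the integral operator with kernel $\Psi(\cdot-\cdot)$ defined in Section \ref{subsecdet2other}. The operator $T$ is exactly $\mathrm{id}\,\mathrm{id}^*$, so the eigenvalues of $T$ are the squares of the singular values of $\mathrm{id}$. Since both $\mathcal{N}_\Psi(\Omega)$ and $L_2(\Omega)$ are Hilbert spaces, the Eckart--Young/Schmidt theorem says that the approximation numbers of $\mathrm{id}$ coincide with its singular values. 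Thus the whole problem reduces to the asymptotics of approximation numbers, which Lemma \ref{appnumtoL2} supplies.

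I would carry this out in the following order. First, record the Mercer expansion \eqref{eq:AppJ1eq1} and note that $\{\varphi_k\}_{k}$ is an $L_2(\Omega)$-orthonormal system of eigenfunctions of $T$ with $T\varphi_k=\lambda_k\varphi_k$; the reproducing-kernel structure then gives that $\{\sqrt{\lambda_k}\,\varphi_k\}_k$ is an orthonormal basis of $\mathcal{N}_\Psi(\Omega)$, so that $\mathrm{id}$ has singular values exactly $\sqrt{\lambda_k}$. Second, invoke Lemma \ref{coro1013} to replace $\mathcal{N}_\Psi(\Omega)$ by $H^\nu(\Omega)$; because the two norms are equivalent, the singular values (equivalently the approximation numbers) of the embedding into $L_2(\Omega)$ change only by bounded multiplicative constants and hence keep the same rate. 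Third, apply Lemma \ref{appnumtoL2} with $p=2$, which yields $b_n\asymp n^{-\nu/d}$ since $-\tfrac{\nu}{d}+\tfrac12-\tfrac1p$ becomes $-\tfrac{\nu}{d}$. Finally, combine the identifications: with $b_n$ equal to the $n$-th singular value $\sqrt{\lambda_n}$, I obtain
\begin{align*}
\lambda_n = b_n^2 \asymp \left(n^{-\nu/d}\right)^2 = n^{-2\nu/d},
\end{align*}
and both the upper and lower bounds in Lemma \ref{appnumtoL2} deliver the two-sided estimate $\asymp$.

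The main obstacle is not any hard estimate but the careful bookkeeping connecting three different descriptions of the same quantity: the Mercer eigenvalues $\lambda_k$, the singular values of $\mathrm{id}$, and the approximation numbers $b_n$. The step that most deserves explicit justification is the claim that the approximation numbers of $\mathrm{id}$ equal its singular values $\sqrt{\lambda_n}$, which rests on the Hilbert-space structure of both spaces and on the exact indexing convention ($\mathrm{rank}(L)<n$ in \eqref{defappnum} corresponds to the best rank-$(n-1)$ approximation, i.e. the $n$-th singular value). I would also be explicit that the norm equivalence from Lemma \ref{coro1013} preserves the polynomial rate, affecting only the constants $c_1,c_2$. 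As an independent consistency check one can note that Condition \ref{C1} makes $T$ behave like a negative power of $(I-\Delta)$, so Weyl's law $\mu_k\asymp k^{2/d}$ for the Laplacian spectrum gives $(1+\mu_k)^{-\nu}\asymp k^{-2\nu/d}$, matching the claimed rate; but the approximation-number route is preferred here since it uses only results already established in the paper.
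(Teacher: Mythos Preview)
Your proposal is correct and follows essentially the same route as the paper: identify the Mercer eigenvalues $\lambda_k$ with the squares of the singular values of the embedding $\mathcal{N}_\Psi(\Omega)\hookrightarrow L_2(\Omega)$, pass to $H^\nu(\Omega)$ via Lemma~\ref{coro1013}, equate singular values with approximation numbers in the Hilbert-space setting, and read off the rate $n^{-\nu/d}$. The only cosmetic difference is that you invoke Lemma~\ref{appnumtoL2} with $p=2$ for the approximation-number asymptotics, whereas the paper cites the underlying result from \cite{edmunds2008function} directly; your version is, if anything, slightly more self-contained within the paper.
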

\begin{proof}
Let $T$ be the embedding operator of $\mathcal{N}_{\Psi}(\Omega)$ into $L_2(\Omega)$, and $T^*$ be the adjoint of $T$. By Proposition 10.28 in \cite{wendland2004scattered},
\begin{align*}
T^*v(x) = \int_\Omega \Psi(x - y)v(y)dy,\qquad v\in L_2(\Omega), \qquad x\in \Omega.
\end{align*}
By Lemma \ref{coro1013}, $H^{\nu}(\Omega)$ coincide with $\mathcal{N}_{\Psi}(\Omega)$. By Theorem 5.7 in \cite{edmunds1987spectral}, $T$ and $T^*$ have the same singular values. By Theorem 5.10 in \cite{edmunds1987spectral}, for all $k\in \mathbb{N}$, $a_k(T)=\mu_k(T)$, where $a_k(T)$ denotes the approximation number for the embedding operator (as well as the integral operator), and $\mu_k$ denotes the singular value of $T$. By Theorem in Section 3.3.4 in \cite{edmunds2008function}, the embedding operator $T$ has approximation numbers satisfying
\begin{align}\label{ineqnAppNum}
C_3k^{-\nu/d}\leq a_k\leq C_4k^{-\nu/d}, \forall k\in \mathbb{N},
\end{align}
where $C_3$ and $C_4$ are two positive numbers. By Theorem 5.7 in \cite{edmunds1987spectral}, $T^*T\varphi_k=\mu_k^2\varphi_k$, and $T^*T\varphi_k=T^*\varphi_k=\lambda_k\varphi_k$, we have $\lambda_k=\mu_k^2$. By (\ref{ineqnAppNum}), $\lambda_k\asymp k^{-2\nu/d}$ holds.
\end{proof}

\begin{lemma}\label{boundsofdeterminant}
Define two matrices $\Psi_1=\frac{1}{\sqrt{n}}(\varphi_1(X),...,\varphi_{p_1}(X))$, and $\Psi_2=\frac{1}{\sqrt{n}}(\varphi_{p_1+1}(X),\varphi_{p_1+2}(X),...)$, where $\varphi_{k}$'s are eigenfunctions as in (\ref{eq:AppJ1eq1}) and $\varphi_k(X)=(\varphi_k(x_1),...,\varphi_k(x_n))^T$ for $k=1,2,...$. With probability at least $1-C_1 e^{-C_2n^{\eta_1}}$,
\begin{align*}
\lambda_{\min}(\Psi_1^T\Psi_1)\geq \eta,
\end{align*}
for any $\mu = O(\sqrt{n})$, where $p = \lfloor(n/\mu)^{d/(2\nu)}\rfloor$, $p_1 =  \min\{p, C_3n^{1/2} \}$, $\eta,\eta_1,C_1,C_2$ are positive constants.
\end{lemma}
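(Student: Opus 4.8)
The plan is to recognize that $\Psi_1^T\Psi_1$ is nothing but the empirical Gram matrix of the first $p_1$ Mercer eigenfunctions. Its $(k,\ell)$ entry is $\frac1n\sum_{j=1}^n\varphi_k(x_j)\varphi_\ell(x_j)=\langle\varphi_k,\varphi_\ell\rangle_n$, and since the $x_j$ are uniform on $\Omega$ with $\mathrm{Vol}(\Omega)=1$, its expectation is $\int_\Omega\varphi_k\varphi_\ell\,dx=\delta_{k\ell}$ by the $L_2$-orthonormality of the eigenfunctions; thus $\mathbb{E}[\Psi_1^T\Psi_1]=I_{p_1}$. Writing a generic element of the span as $g_c=\sum_{k=1}^{p_1}c_k\varphi_k$, orthonormality gives $\|g_c\|_{L_2(\Omega)}^2=\|c\|_2^2$ and $\|\Psi_1 c\|_2^2=\|g_c\|_n^2$, so that
\[
\lambda_{\min}(\Psi_1^T\Psi_1)=\inf_{c\neq 0}\frac{\|g_c\|_n^2}{\|c\|_2^2}=\inf_{g\in\mathcal{G},\,g\neq 0}\frac{\|g\|_n^2}{\|g\|_{L_2(\Omega)}^2},
\]
where $\mathcal{G}:=\mathrm{span}\{\varphi_1,\dots,\varphi_{p_1}\}$. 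Hence it suffices to produce a uniform lower bound on the ratio $\|g\|_n^2/\|g\|_{L_2(\Omega)}^2$ over this finite-dimensional class, which is exactly the type of statement delivered by Lemma \ref{lemmaratio1}.

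To invoke Lemma \ref{lemmaratio1} I first control the sup-norms entering its hypotheses. By Lemma \ref{coro1013} each $\varphi_k$ lies in $H^\nu(\Omega)$ with $\|\varphi_k\|_{H^\nu(\Omega)}\asymp\|\varphi_k\|_{\mathcal{N}_\Psi(\Omega)}=\lambda_k^{-1/2}$, and the Sobolev interpolation inequality $\|\varphi_k\|_{L_\infty(\Omega)}\lesssim\|\varphi_k\|_{L_2(\Omega)}^{1-d/(2\nu)}\|\varphi_k\|_{H^\nu(\Omega)}^{d/(2\nu)}$ together with the decay $\lambda_k\asymp k^{-2\nu/d}$ from Lemma \ref{lemDecayEig} yields $\|\varphi_k\|_{L_\infty(\Omega)}\lesssim k^{1/2}$. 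Consequently, for $\|c\|_2=1$, $\|g_c\|_{L_\infty(\Omega)}\le\big(\sum_{k=1}^{p_1}\|\varphi_k\|_{L_\infty(\Omega)}^2\big)^{1/2}\lesssim p_1$, so the whole span, rescaled to the $L_2$-unit sphere, has sup-norm at most $B^{1/2}$ with $B\lesssim p_1^2$. Since the ratio is scale-invariant I may replace $\mathcal{G}$ by $\mathcal{G}_K:=\{g\in\mathcal{G}:\|g\|_{L_\infty(\Omega)}\le K\}$ for any fixed $K<1$ (say $K=1/2$); this captures every direction because each unit-$L_2$ representative, rescaled by $K/\|g\|_{L_\infty(\Omega)}$, has $L_2$-norm at least $K/B^{1/2}$. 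As $\mathcal{G}_K$ is a ball in a $p_1$-dimensional space, its bracket entropy in $\|\cdot\|_{L_\infty(\Omega)}$ obeys the volumetric bound $H_B(\delta,\mathcal{G}_K,\|\cdot\|_{L_\infty(\Omega)})\lesssim p_1\log(CK/\delta)$.

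With these inputs, the plan is to choose $\delta_n$ slightly below the admissible ceiling $K/(2B^{1/2})$ so that, on the one hand, every relevant direction satisfies $\|g\|_{L_2(\Omega)}\ge 2\delta_n$, and, on the other hand, the key hypothesis $H_B(\delta_n,\mathcal{G}_K,\|\cdot\|_{L_\infty(\Omega)})\le n\delta_n^2/(1200K^2)$ of Lemma \ref{lemmaratio1} is met. Feeding $\delta_n$ into the first inequality of that lemma then gives $\inf_{g\in\mathcal{G}_K,\,\|g\|_{L_2(\Omega)}\ge 2\delta_n}\|g\|_n^2/\|g\|_{L_2(\Omega)}^2\ge\eta_1$ with probability at least $1-C_1\exp(-C_2 n\delta_n^2/K^2)$, and by scale-invariance this bound transfers to all of $\mathcal{G}$, that is $\lambda_{\min}(\Psi_1^T\Psi_1)\ge\eta_1=:\eta$. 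The exponent $n\delta_n^2/K^2$ is of the form $n^{\eta_1}$ for a suitable power, producing the asserted probability $1-C_1 e^{-C_2 n^{\eta_1}}$.

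The crux of the argument --- and the reason the index is truncated at $p_1=\min\{p,C_3 n^{1/2}\}$ --- is the simultaneous satisfaction of the two competing constraints on $\delta_n$: the entropy condition forces $\delta_n^2\gtrsim K^2 p_1\log(1/\delta_n)/n$ from below, while capturing all directions forces $\delta_n\lesssim K/B^{1/2}$ from above. Reconciling them requires $p_1\,B\,\log(1/\delta_n)\lesssim n$; with the interpolation bound $B\lesssim p_1^{2}$ this is where the cap $p_1\le C_3 n^{1/2}$ with a sufficiently small constant $C_3$ (and, if needed, a local-Weyl-type sharpening of $B$ to $O(p_1)$ for the Mat\'ern eigenfunctions) becomes essential. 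Verifying this balance, and thereby confirming that $C_3$ can be fixed independently of $n$ so that Lemma \ref{lemmaratio1} applies throughout the range $p_1\le C_3 n^{1/2}$, is the main obstacle; once it is in place the conclusion is immediate.
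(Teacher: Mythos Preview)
Your overall strategy is the same as the paper's: express $\lambda_{\min}(\Psi_1^T\Psi_1)$ as $\inf_{g\in\mathcal{G}}\|g\|_n^2/\|g\|_{L_2(\Omega)}^2$ over $\mathcal{G}=\mathrm{span}\{\varphi_1,\dots,\varphi_{p_1}\}$ and then invoke Lemma~\ref{lemmaratio1}. The structure is right; the gap is in the quantitative inputs you feed into that lemma.

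The critical loss is in your sup-norm bound. You control each eigenfunction individually, $\|\varphi_k\|_{L_\infty(\Omega)}\lesssim k^{1/2}$, and then apply Cauchy--Schwarz to get $\|g_c\|_{L_\infty(\Omega)}\lesssim p_1$ for $\|c\|_2=1$, i.e.\ $B\lesssim p_1^2$. Carrying this through your balancing argument forces $p_1^3\log p_1\lesssim n$, far short of the cap $p_1\le C_3 n^{1/2}$ the lemma asserts. You noticed this and appealed to a hypothetical ``local-Weyl-type sharpening of $B$ to $O(p_1)$'', but no such external input is needed. The paper obtains exactly this sharpening by applying the interpolation inequality to the \emph{combination} $g=\sum_{k\le p_1}c_k\varphi_k$ rather than to each $\varphi_k$ separately: for $\|c\|_2=1$,
\[
\|g\|_{\mathcal N_\Psi(\Omega)}^2=\sum_{k\le p_1}\frac{c_k^2}{\lambda_k}\le\frac{1}{\lambda_{p_1}}\lesssim p_1^{2\nu/d},
\]
whence $\|g\|_{L_\infty(\Omega)}\lesssim\|g\|_{L_2(\Omega)}^{1-d/(2\nu)}\|g\|_{H^\nu(\Omega)}^{d/(2\nu)}\lesssim p_1^{1/2}$. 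This single line gives $B\lesssim p_1$ and removes the main obstacle you flagged.

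A secondary difference: you use the volumetric entropy bound $H_B(\delta,\mathcal{G}_K,\|\cdot\|_{L_\infty})\lesssim p_1\log(K/\delta)$, which even with the correct $B\lesssim p_1$ leaves a residual logarithm and only yields $p_1^2\log p_1\lesssim n$. The paper instead rescales $\mathcal{G}$ into the unit ball of $H^\nu(\Omega)$ (dividing by $\rho\asymp p_1^{\nu/d}$) and uses the Sobolev bracket entropy $H_B(\delta,\cdot,\|\cdot\|_{L_\infty})\lesssim\delta^{-d/\nu}$, which is log-free. Taking $\delta_n=1/\rho$ and $K\asymp p_1^{1/2}/\rho$, the condition $n\delta_n^{2+d/\nu}/K^2\gtrsim 1$ reduces cleanly to $p_1^2\lesssim n$, matching the stated cap exactly.
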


\begin{proof}
Consider $u^T\Psi_1^T\Psi_1u$, where $u=(u_1,...,u_p)^T\in \mathbb{R}^p$ with $\|u\|_2=1$. Let $\mathcal{Q}=\{g:g=\sum_{i=1}^p u_i\varphi_i\}$. Since $\varphi_i$'s are orthonormal, $\|g\|_{L_2(\Omega)}=1$. For any $g\in \mathcal{Q}$, by Lemma \ref{lemDecayEig}, $\|g\|^2_{H^{\nu}(\Omega)}\leq \frac{C_1}{\lambda_{p_1}}\leq C_2 p_1^{2\nu/d}$.
By the interpolation inequality,
\begin{align*}
\|g\|_{L_\infty(\Omega)}  \leq C_3\|g\|^{\frac{d}{2\nu}}_{H^{\nu}(\Omega)}= C_4 p_1^{1/2}.
\end{align*}
We shall use Lemma \ref{lemmaratio1} to link $\|g\|_n$ to $\|g\|_{L_2(\Omega)}$. First we need to check the conditions of Lemma \ref{lemmaratio1} hold. Since $\|g\|_{L_2(\Omega)} = 1$, it suffices to check the entropy condition. Let $\rho = C_2^{1/2} p_1^{\nu/d}$. Consider class $\mathcal{Q}'=\{g:g=\frac{f}{\rho}, f\in\mathcal{Q}\}$. Since $\mathcal{Q}'\subset H^\nu(\Omega)$, there exists a constant $C_5$ such that
\begin{align*}
H_B(\delta_n/\text{Vol}(\Omega),\mathcal{F}',\|\cdot\|_{L_\infty(\Omega)})\leq C_5\bigg(\frac{1}{\delta_n}\bigg)^{d/\nu}.
\end{align*}
The entropy condition is satisfied if
\begin{align}\label{ineq:HBcond1}
n\delta_n^{2+d/\nu}/K^2 > C_6,
\end{align}
where $\delta_n=1/\rho$, $K\leq C_4 p_1^{1/2}/\rho$, and $C_6$ is some constant depending on $C_1$--$C_5$ and $\Omega$. By direct calculations, if $p_1 \leq C_7\sqrt{n}$ for some constant $C_7$, (\ref{ineq:HBcond1}) is satisfied. 

By Lemma \ref{lemmaratio1}, 
\begin{align}\label{72eq1}
u^T\Psi_1^T\Psi_1u & = \frac{1}{n}\sum_{i=1}^n \bigg(\sum_{j=1}^p u_j\varphi_j(X_i)\bigg)^2= \|g\|_n^2\geq \eta,
\end{align}
with probability at least $1-C_8\exp(-C_9n^{\eta_1})$ for some constant $\eta$ and $\eta_1$. This finishes the proof.
\end{proof}

\end{appendix}
\bibliographystyle{apalike}
\bibliography{paperref}

\end{document}